\newcommand{\eps}{\varepsilon}
\newcommand \commentout[1] {}
\newcommand{\R}{\mathbb{R}}
\newcommand{\N}{\mathbb{N}}
\newcommand {\sg} {\sigma}
\newcommand {\Chi} {{\bf \raise 2pt \hbox{$\chi$}} }
\newcommand {\sgn} { {\rm sgn} }
\newcommand {\Div}  { {\rm div} }
\newcommand {\f}   {\frac}
\newcommand {\p}   {\partial}
\newcommand*{\dd}{\mathop{\kern0pt\mathrm{d}}\!{}}
\DeclarePairedDelimiter{\abs}{\lvert}{\rvert}
\DeclarePairedDelimiter{\norm}{\lVert}{\rVert}
\theoremstyle{plain}
\newtheorem*{thm*}{Theorem}
\newtheorem{thm}{Theorem}[section]
\newtheorem{proposition}[thm]{Proposition}
\theoremstyle{remark}
\newtheorem{remark}[thm]{\bf Remark}
\newcommand{\ie}{\emph{i.e.}\;}
\newcommand{\eg}{\emph{e.g.}\;}
\newcommand{\etal}{\emph{et al.}\;}
\newcommand{\beq}{\begin{equation}}
\newcommand{\eeq}{\end{equation}}
\newcommand{\bea} {\begin{array}{rl}}
\newcommand{\eea} {\end{array}}
\newcommand{\bepa}{\left\{ \begin{array}{l}}
\newcommand{\eepa} {\end{array}\right.}
\numberwithin{equation}{section}
\title{Degenerate Cahn-Hilliard and incompressible limit of a Keller-Segel model}
\author{Charles Elbar\thanks{Sorbonne Universit\'{e}, CNRS, Universit\'{e} de Paris, Inria, Laboratoire Jacques-Louis Lions (LJLL), F-75005 Paris, France } \thanks{email: charles.elbar@sorbonne-universite.fr}
\and Beno\^it Perthame\footnotemark[1]  \thanks{email: benoit.perthame@sorbonne-universite.fr}
\and Alexandre Poulain\thanks{Department of Numerical Analysis and Scientific Computing, Simula Research Laboratory, Oslo, Norway} \thanks{email: poulain@simula.no}
}
\date{\today}
\begin{document}

\maketitle

\begin{abstract}
    The Keller-Segel model is a well-known system representing chemotaxis in living organisms. We study the convergence of a generalized nonlinear variant of the Keller-Segel to the degenerate Cahn-Hilliard system. This analysis is made possible from the observation that the Keller-Segel system is equivalent to a relaxed version of the Cahn-Hilliard system. Furthermore, this latter equivalent system has an interesting application in the modelling of living tissues. Indeed, compressible and incompressible porous medium type equations are widely used to describe the mechanical properties of living tissues. The relaxed degenerate Cahn-Hilliard system, can be viewed as a compressible living tissue model for which the movement is driven by Darcy's law and takes into account the effects of the viscosity as well as surface tension at the surface of the tissue.  
    We study the convergence of the Keller-Segel system to the Cahn-Hilliard equation and some of the analytical properties of the model such as the incompressible limit of our model. 
    Our analysis relies on a priori estimates, compactness properties, and on the equivalence between the Keller-Segel system and the relaxed degenerate Cahn-Hilliard system.  
\end{abstract}
\vskip .7cm

\noindent{\makebox[1in]\hrulefill}\newline
2010 \textit{Mathematics Subject Classification.}  35B40; 35B45; 35G20; 35Q92
\newline\textit{Keywords and phrases.} Degenerate Cahn-Hilliard equation;  Asymptotic analysis; Keller-Segel system; Incompressible limit; Hele-Shaw equations; Surface tension

\section{Introduction}


When describing living tissue dynamics, the most relevant Hele-Shaw free boundary models come with a surface tension term, \cite{Escher-solutions,Friedman}. In the regime of a smooth free boundary, these problems are well established and obtained as the sharp-interface limits of Cahn-Hilliard equations \cite{Alikakos_convergence}. However, it is an open question to prove such a derivation in the global regime of weak solutions. Our goals are to establish the incompressible limit departing from the Relaxed Cahn-Hilliard system (RCH in short), and to make the link with a specific Generalized Keller-Segel (GKS) type system, namely 
%
\begin{align}
\partial_{t} n-\f\delta{2\sg}\Delta n^2 + \f{\delta}{\sg}\Div\left(n\nabla w\right) &= nG\left(\f\delta\sg\left(n-w\right)\right),\quad &\text{in}\quad &(0,+\infty)\times\Omega,\label{eq:KS1}\\
-\sigma\Delta w +  \f\sg\delta w^\gamma + w  &= n,&\quad \text{in}\quad &(0,+\infty)\times\Omega,\label{eq:KS2}
 \end{align}
where  $\Omega\subset \R^d$ is a smooth open bounded domain. We are going to prove that as $\sg $ vanishes, its limit is the  Degenerate Cahn-Hilliard (DCH) model 
 \begin{align}
\partial_{t} n-\Div(n\nabla \mu)=nG(\mu),\quad \text{in}\quad &(0,+\infty)\times\Omega,\label{eq:CH1-nonrelax}\\
\mu =n^{\gamma}-\delta\Delta n,\quad \text{in}\quad &(0,+\infty)\times\Omega \label{eq:CH2-nonrelax},
\end{align}
where $\mu$ is formed of a repulsion potential $n^\gamma$ and a surface tension potential $-\delta\Delta n$ (see further explanations below). In this work, $n^{\gamma}$ actually stands for $\max(0,n)^{\gamma}$ but since the solutions are nonnegative, we keep the notation $n^{\gamma}$ for sake of clarity. The growth (or source) term $G(\mu)$ takes into account death and birth of cells.

It is crucial for the analysis of the convergence of the GKS system to the DCH equation to remark that the nonlinear system~\eqref{eq:KS1}--\eqref{eq:KS2} is equivalent to a relaxed version of the Cahn-Hilliard model (see \cite{perthame_poulain}).
Indeed, defining  
\begin{equation}
\label{eq:defofw}
\mu = \f\delta\sigma\left(n-w\right),\quad  w= n-\frac{\sigma}{\delta}\mu, \qquad p=w^\gamma, \quad \mu =p- \delta \Delta w,
\end{equation}
System~\eqref{eq:KS1}--\eqref{eq:KS2} can be rewritten in the form
\begin{align}
\partial_{t} n-\Div(n\nabla \mu)=nG(\mu),\quad \text{in}\quad &(0,+\infty)\times\Omega,\label{eq:CH1}\\
-\sg \Delta \mu + \mu =\left(n-\frac{\sigma}{\delta}\mu \right)^{\gamma}-\delta\Delta n,\quad \text{in}\quad &(0,+\infty)\times\Omega \label{eq:CH2}.
\end{align}
Although they are equivalent,  Systems~\eqref{eq:KS1}--\eqref{eq:KS2} and \eqref{eq:CH1}--\eqref{eq:CH2} carry different informations. For instance, since we consider non-negative densities $n \geq 0$, we also have
\[
w \geq 0, \qquad \mu \leq \frac {\delta} {\sigma} n.
\]
The Keller-Segel formulation of the system also provides higher regularity of the solutions. In the following, we combine analytical methods better adapted to each of these two equivalent formulations 
in order to pass to the incompressible limit $\gamma\to \infty$ and obtain
\begin{equation}
\partial_{t}n_{\sigma,\infty}-\Div(n_{\sigma,\infty}\nabla\mu_{\sigma,\infty})=n_{\sigma,\infty}G(\mu_{\sigma,\infty}),
\label{IL:n}
\end{equation}
\begin{equation}\left\{ \begin{array}{l}
\mu_{\sigma,\infty}=p_{\sigma,\infty}-\delta\Delta w_{\sigma,\infty},
\\[5pt]
-\sigma\Delta w_{\sigma,\infty}+\frac{\sigma}{\delta}p_{\sigma,\infty}+w_{\sigma,\infty}=n_{\sigma,\infty},\quad w_{\sigma,\infty}=n_{\sigma,\infty}-\frac{\sigma}{\delta}\mu_{\sigma,\infty}.
\end{array} \right.
\label{IL:mu} \end{equation}
In this 'stiff pressure limit', the system~\eqref{IL:n}--\eqref{IL:mu} has three unknowns $n_{\sigma,\infty},w_{\sigma,\infty}$ (or $\mu_{\sigma,\infty}$) and
$p_{\sigma,\infty}$ and is completed with a type of incompressibility condition
\begin{equation}
    p_{\sigma,\infty}(w_{\sigma,\infty}-1)=0.
\label{IL:p}    
\end{equation}

For these systems, we use the zero mass and energy flux boundary conditions
\begin{equation}
\label{boundary}
n\frac{\partial\mu}{\partial\nu}= \frac{\partial w}{\partial\nu}=0	\quad\text{on}\quad (0,\infty)\times\partial\Omega,
\end{equation}
where $\nu$ is the outward normal vector to the boundary of $\Omega$, and the initial condition
\begin{equation}
    n(0,\cdot) = n^0 \in H^1(\Omega)\cap L^{\infty}(\Omega),\qquad \text{and}\qquad  0\le n^0\le 1 \quad \text{a.e.}
    \label{initcond}
\end{equation}
We also need assumptions on the pressure-dependent proliferation rate of the cells 
\begin{equation}\label{eq:assumption G} 
 G\in C(\R; \R), \qquad    \sup_{\mu \in \R} (1+|\mu|) |G(\mu)|<+\infty.
\end{equation}
For instance we can suppose that there exists $\mu_{H}$ such that $G(\mu)=0$ for $|\mu|>\mu_{H}$. In that case, the value $\mu_H$ can be viewed as the \textit{homeostatic pressure} which is the lowest level of pressure that prevents cell multiplication due to contact-inhibition. For the pressure law exponent, we assume
\begin{equation} 
    \gamma > 1.
\label{eq:assumption gamma}
\end{equation}

Using assumptions~\eqref{eq:assumption G} the total mass of the system is bounded. More precisely, we find a constant $C_{T}$ such that for all $t\in(0,T)$ and for all $\sigma,\gamma$, it holds for all $T>0$
\begin{equation}
\label{PW}
\frac{1}{|\Omega|}\int_{\Omega}n_{\sigma,\gamma}(t,\cdot)\le C_{T},\quad \frac{1}{|\Omega|}\int_{\Omega}w_{\sigma,\gamma}(t,\cdot)\le C_{T}.
\end{equation}
This allows us to use the Poincaré-Wirtinger inequality.

System~\eqref{eq:CH1}--\eqref{eq:CH2} comes with the energy and entropy respectively defined by 
\begin{equation*}
\mathcal{E}[n,\mu]=\int_{\Omega}\frac{(n-\frac{\sigma}{\delta}\mu)^{\gamma+1}}{\gamma+1}+\frac{\delta}{2}\Big|\nabla \Big(n-\frac{\sigma}{\delta}\mu\Big)\Big|^{2}+\frac{\sigma}{\delta}\frac{|\mu|^{2}}{2},
\end{equation*}
\begin{equation*}
\Phi[n]=\int_{\Omega}n\log n.
\end{equation*}
They formally satisfy, as already used in~\cite{perthame_poulain}, the relations
\begin{equation*}
\frac{d}{dt}\mathcal{E}[n,\mu]=-\int_{\Omega}n|\nabla\mu|^{2}+\int_{\Omega}n\mu G(\mu),
\end{equation*}
\begin{equation*}
\begin{aligned}
\frac{d}{dt}\Phi[n]=-\int_{\Omega}\delta\Big|\Delta\Big(n-\frac{\sigma}{\delta}\mu\Big)\Big|^{2}&+\frac{\sigma}{\delta}|\nabla\mu|^{2}\\
&+\gamma\Big(n-\frac{\sigma}{\delta}\mu\Big)^{\gamma-1}\Big|\nabla \Big(n-\frac{\sigma}{\delta}\mu\Big)\Big|^{2}+\int_{\Omega}n G(\mu)(\log(n)+1).
\end{aligned}
\end{equation*}
From \eqref{eq:defofw}, they also provide informations on $w$.
The purpose of this work is to establish estimates which allow us to study the convergence of the nonlinear GKS model to the DCH model, and to analyze the incompressible limit $\gamma \to \infty$.

\paragraph{Modelling of living tissues and assumptions}
Our study lies among other models which represent biological phenomena that we present here.

System~\eqref{eq:CH1}--\eqref{eq:CH2} models the movement and proliferation of a population of cells constituting a biological tissue and driven by the combined effects of the pressure, the surface tension occurring at the surface of the tissue, as well as its viscosity.
As in the context of the modelling of diphasic fluid, in System~\eqref{eq:CH1}--\eqref{eq:CH2}, $n$ is the order parameter, \ie the relative cell density $n=n_1/(n_1+n_2)$. The unknown $\mu$ is a quantity related to the effective pressure and is also used to relax the fourth order Cahn-Hilliard type equation in a system of two-second order equations. Following the Cahn-Hilliard terminology or Mechanobiology, we refer to $\mu$ indistinctly as the chemical potential or as the effective pressure.
The equation for the effective pressure (\ie Equation~\eqref{eq:CH2}) contains the effects of both the pressure, through the term $\left(n-\f{\sg}{\delta} \mu \right)^\gamma$ with $\gamma > 1$ that controls the stiffness of the pressure law, and surface tension by $- \delta \Delta n$, where $\sqrt\delta$ is the width of the interface in which partial mixing of the two components $n_1$, $n_2$ occurs. Equation~\eqref{eq:CH2} also contains a diffusive term $-\sg\Delta \mu$ used to relax the Cahn-Hilliard system~\eqref{eq:CH1}--\eqref{eq:CH2} as in \cite{perthame_poulain}. This relaxation term can also be interpreted as the effect of viscosity. 



Modelling tissue growth and understanding the dynamics of cells have been the center of many research pieces in the past decade. Initiated by Greenspan~\cite{Greenspan}, general mechanical models of tumor growth \cite{byrne_modelling_2004,Friedman,Lowengrub-bridging} have been proposed and used the internal pressure as the main effect that drives the movement and proliferation of cells. The prototypical example of a mechanical living tissue model is
\begin{equation}
    \p_t n = \Div\left(n\nabla p \right) + nG(p),\quad p=P_\gamma(n):=\f{\gamma}{\gamma-1} n^{\gamma-1}, 
    \label{eq:general-mod}
\end{equation}
in which $p$ is the pressure and $n$ the cell density. 
In this kind of model, Darcy's law of movement is used to reflects the porous media formed by the extra-cellular matrix and the tendency of cells to move away from regions of high compression. The dependency on the pressure of the growth function has also been used to model the sensitivity of tissue proliferation to compression. The behavior and stability of monotone traveling waves has been studied in~\cite{dalibard:hal-03300624}.
Interestingly, Perthame \etal~\cite{Perthame-Hele-Shaw} have shown that in the incompressible limit (\ie $\gamma\to \infty$), solutions of Model~\eqref{eq:general-mod} converge to a limit $(n_\infty,p_\infty)$ solution of a free boundary limit problem of Hele-Shaw type for which the speed of the free boundary is given by the normal component of $p_\infty$. In this limit, the solution of Equation~\eqref{eq:general-mod} organizes into 2 regions: $\Omega(t)$ in which the pressure is positive (corresponding to the tissue) and outside this zone where $p=0$. Furthermore, the free Boundary problem is supplemented by a complementary equation that indicates that the pressure satisfies
\begin{equation}
    -\Delta p_\infty = G(p_\infty),\quad \text{in}\quad \Omega(t),\quad \text{or similarly}\quad p_\infty(\Delta p_\infty +  G(p_\infty)) = 0 \quad \text{a.e. in } \Omega.
    \label{eq:complem}
\end{equation}
However, the crucial role of the cell-cell adhesion at the surface of the tissue is not retrieved at the limit. Indeed, as pointed by Lowengrub \etal~\cite{Lowengrub-bridging}, the velocity of the free surface should depend on its local curvature denoted by $\kappa$. 

Thus, multiple variants of the general model~\eqref{eq:general-mod} have been suggested to consider other physical effects in mechanical models of tissue growth. 
The addition of the effect of viscosity in the model has been made to represent the friction between cells~\cite{Basan,Bittig_2008} through the use of Stokes' or Brinkman's law (see \cite{Allaire} for a rigorous derivation of Brinkman's law in inhomogeneous materials). However, as pointed out by Perthame and Vauchelet~\cite{Perthame-incompressible-visco}, Brinkman's law leads to a simpler version of the model and, therefore, is a preferential choice for its mathematical analysis. 
Adding viscosity through the use of Brinkman's law leads to the model  
\begin{equation}
\begin{cases}
    \p_t n = \Div\left(n\nabla \mu \right) + nG(p), \quad &\text{in}\quad (0,+\infty)\times\Omega,\\
    -\sg \Delta \mu + \mu = p,\quad &\text{in}\quad (0,+\infty)\times\Omega.
    \end{cases}
    \label{eq:visco-living}
\end{equation}
The incompressible limit of this system also yields the complementary relation (see~\cite{Perthame-incompressible-visco})
\[
    p_\infty(p_\infty - \mu_\infty - \sigma G(p_\infty)) = 0,\quad \text{a.e. in } \Omega.
\]
In the incompressible limit, notable changes compared to the system with Darcy's law are found. First the previous complementary relation is different compared to Equation~\eqref{eq:complem}, and the pressure $p_\infty$ in the limit is discontinuous, \ie there is a jump of the pressure located at the surface of $\Omega(t)$.
However, the pressure jump is related to the potential $\mu$ and not to the local curvature of the free boundary $\p \Omega(t)$. The authors already indicated that a possible explanation to this is that the previous model does not include the effect of surface tension.
Indeed, classical solutions for the Hele-Shaw problem with the addition of the effect of surface tension leads to the free boundary problem (see \eg \cite{Escher-solutions})
\begin{equation}
    \begin{cases}
        -\Delta \mu = 0\quad \text{in}\quad \Omega \setminus \p\Omega(t),\\
        \mu = \sigma \kappa \quad \text{on} \quad \p\Omega(t).
    \end{cases}
\end{equation}
This correct Hele-Shaw limit has been formally obtained as the sharp-interface asymptotic model of the Cahn-Hilliard equation~\cite{Alikakos_convergence}.

Cahn-Hilliard type models are widely used nowadays to represent living tissues and in particular tumors~\cite{wise_three-dimensional_2008,Frieboes-2010-CH}. Being of fourth-order, the Cahn-Hilliard equation is often rewritten in a system of two second-order equations, \ie 
\begin{equation}
    \p_t n = \Div\left(n\nabla \left(\psi^\prime(n)-\delta \Delta n \right)\right) \to \begin{cases}
        \p_t n &= \Div\left(n\nabla \mu \right),\\
        \mu &= -\delta \Delta n +\psi^\prime(n),
    \end{cases}
    \label{eq:Cahn-Hilliard}
\end{equation}
where $n$ is the concentration of a phase and $\mu$ is called the chemical potential in material sciences but is often used as an effective pressure for living tissues (see \cite{chatelain_2011,BenAmar_C_F,Agosti-CH-2017,chatelain_morphological_2011}). Also, the interaction potential $\psi(n)$ contained in this effective pressure term comprises the effects of attraction and repulsion between cells. The physically relevant form of this potential is a double-well logarithmic potential and is often approximated by a smooth polynomial function. However, recent studies show that for the modelling of living tissues and for the particular application where only one of the component of the mixture experiences attractive and repulsive forces, a single-well logarithmic potential is more relevant \cite{byrne_modelling_2004}. In our particular application, the potential $\psi^\prime(n)$ is only related to the modelling of pressure and, therefore, the final model includes only repulsive forces. 

We emphasize that multiple variants of the Cahn-Hilliard model have been studied to represent living tissues and tumors including passive transport by Darcy's Law \cite{Garcke-2016-CH-darcy,Garcke-2018-multi-CH-darcy,Frigeri-CH-Darcy} and with the effect of viscosity \cite{ebenbeck_cahn-hilliard-brinkman_2018,Ebenbeck-Brinkman}. The existence of weak solutions for degenerate mobility has been first studied by Elliott and Garcke in \cite{Garcke-CH-deg}. The case of a smooth potential and mobility was treated by Dai and Du in \cite{Dai-Du}. In the definition of their weak solutions, the authors did not identify the potential $\mu$ except in the zone where the density does not vanishes. Indeed this difficulty comes from the energy which provides the bound $\|n^{1/2}\nabla\mu\|_{L^{2}(\Omega_{T})}\le C$. The term $\Div(n\nabla\mu)$ in the weak solutions is then treated as $\Div(n^{1/2}\zeta)$ where $\zeta$ is an $L^{2}$ vector field that can be identified in the zone $n\ne 0$. 
For our system, we identify the potential $\mu$ by considering a weaker type of solutions.

\commentout{

For the sake of simplicity, that is often needed to retain the capacity to analyze rigorously a model, we combine the two modelling frameworks provided by the general living tissue model~\eqref{eq:visco-living} and the Cahn-Hilliard model~\eqref{eq:Cahn-Hilliard} to design a system of equations that includes the effect of both viscosity and surface tension and study its incompressible limit. At the end, our model~\eqref{eq:CH1}--\eqref{eq:CH2} is a modified {relaxed degenerate Cahn-Hilliard model} introduced by Perthame and Poulain in ~\cite{perthame_poulain} and is equivalent to a nonlinear variant of the GKS system.

}

\commentout{
Therefore, our idea is to add surface tension in the living tissue model of \cite{Perthame-incompressible-visco} that already includes viscosity, and study its incompressible limit.  

- Cahn-Hilliard for living tissue \cite{wise_three-dimensional_2008,Frieboes-2010-CH}, compressible Cahn-Hilliard ? \cite{Lowengrub-quasi-incompressible-1998,abels_diffuse_2008}
- CH models with interpretation of chemical potential as pressure \cite{chatelain_2011,BenAmar_C_F,Agosti-CH-2017,chatelain_morphological_2011}
- Cahn-Hilliard Darcy \cite{Garcke-2016-CH-darcy,Garcke-2018-multi-CH-darcy,Frigeri-CH-Darcy}
- Cahn-Hilliard Brinkman (to add viscosity) \cite{ebenbeck_cahn-hilliard-brinkman_2018,Ebenbeck-Brinkman}
- Relaxation of Cahn-Hilliard \cite{perthame_poulain}

- Incompressible limit in compressible living tissue model \cite{Perthame-Hele-Shaw,Perthame-incompressible-visco,Debiec-Brinkman}
}

\paragraph{Contents of the paper}
\begin{sloppypar}
We present and analyze the convergence of the nonlinear GKS model to the DCH model (corresponding to the study of the vanishing viscosity limit, \ie $\sg\to 0$). We also study rigorously the incompressible limit, \ie $\gamma \to \infty$ for this new living tissue model.
\end{sloppypar}
Section~\ref{sec:structure} contains  the proof that the nonlinear GKS system~\eqref{eq:KS1}--\eqref{eq:KS2} converges to the DCH model~~\eqref{eq:CH1-nonrelax}--\eqref{eq:CH2-nonrelax}. This analysis is made possible from the fact that the nonlinear GKS model is equivalent to the RCH model and from the use of standard compactness properties. 
To the best of our knowledge, this is the first time that a Keller-Segel system is interpreted as a Cahn-Hilliard  system.
The study of the incompressible limit is the purpose of Section~\ref{sec:inpressible-limit}. Since uniform bounds in $\gamma$ are required to use standard compactness properties and pass to the limit, we provide new estimates relying on results obtained for the non-linear GKS model. 
Section~\ref{section:eps} contains a proof of the existence of weak solution for the system~\eqref{eq:KS1}--\eqref{eq:KS2} and can be read first for the reader's convenience. The proof follows the lines of the works \cite{Garcke-CH-deg,perthame_poulain}. Indeed, using a regularization of the mobility, we apply a Galerkin approximation to show the existence of weak solutions to the regularized model. 
The main novelty in the proof of this latter result is the need of strong convergence for the chemical potential which is required to pass to the limit for the source term.
Then, a priori estimates on the regularized model give us sufficient control to use standard compactness results and pass to the limit of the regularization, hence obtaining the existence of weak solutions to the RCH-DKS model. Uniqueness of the weak solution is also shown in the case where there is no proliferation term. 

Therefore, our analysis relies on a combination of results obtained for the Cahn-Hilliard model as well as the non-linear GKS limit.  The addition of the regularization term in the Cahn-Hilliard model opens a new angle of attack to solve new problems and to find the correct Hele-Shaw limit for a simple living tissue model.

\section{From the GKS to the DCH system ($\sigma\to 0$)}
\label{sec:structure}

We consider weak solutions of System~\eqref{eq:KS1}--\eqref{eq:KS2} as built in Section~\ref{sec:existence} and prove that weak solutions of the DCH equation can be obtained as the limit of the GKS system. In the definition of the weak solutions, there is not enough regularity on $\mu$ to treat the term $-\Div(n\nabla\mu)$ as $\int n\nabla\mu\nabla\chi$ where $\chi$ is a test function. We need to rely on another integration by parts based on the definition of $\mu$ and Equation~\eqref{weakform3}.

Weak solutions of the DCH equation~\eqref{eq:CH1-nonrelax}--\eqref{eq:CH2-nonrelax} satisfy both forms
\begin{align}
\int_{0}^{T}\langle \chi,\partial_{t}n\rangle&=\int_{\Omega_{T}}\Big[\frac{\gamma}{\gamma+1} n^{\gamma+1}-\delta n\Delta n-\delta\frac{|\nabla n|^{2}}{2}\Big]\Delta\chi+\delta\sum_{i=1}^{d}\partial_{i}n\nabla n\cdot\nabla\partial_{i}\chi+n G_{\infty}\chi,
\label{CH:weak1}\\
\int_{0}^{T}\langle \chi,\partial_{t}n\rangle&=\int_{\Omega_{T}}\frac{\gamma}{\gamma+1} n^{\gamma+1}\Delta\chi-\delta \Delta n(\nabla n\cdot\nabla\chi +n\Delta\chi)+n G_{\infty}\chi,
\label{CH:weak1_2}
\end{align}
for all $\chi\in L^{\infty}(0,T,W^{2,\infty}(\Omega))$ with $\nabla\chi\cdot\nu=0$, and
\begin{equation}
\int_{0}^{T}\langle \chi,\partial_{t}n\rangle=- \int_{\Omega_{T}} n \partial_{t} \chi - \int_{\Omega}\chi(t=0) n^0,
\label{CH:weak2}
\end{equation}
when, additionally, $\chi \in C^1([0,T]\times \Omega)$ and $\chi (T)=0$. The source term can be identified as $G_{\infty}=G(\mu)$ in dimension $d\le 4$. The weak formulation~\eqref{CH:weak1} comes from Equation~\eqref{weakform3}. The forms~\eqref{CH:weak1} and~\eqref{CH:weak1_2} are equivalent with the formula
\begin{equation*}
\frac{1}{2}\nabla(|\nabla f|^{2})=\Div(\nabla f\otimes\nabla f)-\Delta f\nabla f. 
\end{equation*}
  
We have the following convergence theorem.  
\begin{thm}[Convergence of the GKS system to the DCH eq.]
\label{convKSCH}
\begin{sloppypar}
 Assume \eqref{initcond}--\eqref{eq:assumption gamma}. The weak solutions $(n_{\sigma}, \mu_{\sigma})$ of the RCH-DKS model converge to $(n,\mu)$  weak solution of~\eqref{eq:CH1-nonrelax}--\eqref{eq:CH2-nonrelax} in the sense defined by~\eqref{CH:weak1}--\eqref{CH:weak2}. They satisfy the regularity estimates  $n\in L^{\infty}(0,T,H^{1}(\Omega))\cap L^{2}(0,T,H^{2}(\Omega))$, $\partial_{t}n\in (L^{4}(0,T,W^{1,s}(\Omega)))'$ where $s$ is defined in Proposition~\ref{compactnessh2}, $\mu\in L^{2}(0,T,L^{\frac{\gamma+1}{\gamma}}(\Omega))\cap L^{\frac{\gamma+1}{\gamma}}(0,T,L^{\frac{d(\gamma+1)}{(d-2)\gamma}}(\Omega))$.
 \end{sloppypar}

In dimension $d\le4$, $\mu$ belongs to $L^{2}(\Omega)
$ and for all $\chi\in L^{\infty}(0,T,W^{2,\infty}(\Omega))$ with $\nabla\chi\cdot\nu=0$, Equation~\eqref{CH:weak1} can be written as
\begin{align}
\int_{0}^{T}\langle \chi,\partial_{t}n\rangle&=\int_{\Omega_{T}}\mu(\nabla n\cdot\nabla\chi+n\Delta\chi)+nG(\mu)
\chi.\label{CH:weak3}
\end{align}
with $\mu=n^{\gamma}-\delta\Delta n$ a.e. in $\Omega$.
\end{thm}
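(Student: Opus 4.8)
The plan is to combine the uniform-in-$\sigma$ a priori bounds coming from the energy $\mathcal{E}$ and entropy $\Phi$ with an Aubin--Lions compactness argument, and then to pass to the limit $\sigma\to0$ in the weak formulation after integrating the (degenerate) flux by parts. First I would collect the estimates that do not depend on $\sigma$: the energy controls $w_\sigma$ in $L^\infty(0,T;H^1)$, $w_\sigma^{\gamma+1}$ in $L^\infty(0,T;L^1)$ and $\sqrt{\sigma/\delta}\,\mu_\sigma$ in $L^\infty(0,T;L^2)$, while the entropy adds $\Delta w_\sigma$ in $L^2(\Omega_T)$ and $\sqrt{\sigma/\delta}\,\nabla\mu_\sigma$ in $L^2(\Omega_T)$, so that $w_\sigma$ is bounded in $L^\infty(0,T;H^1)\cap L^2(0,T;H^2)$. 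Since $n_\sigma=w_\sigma+\tfrac{\sigma}{\delta}\mu_\sigma$ and $\tfrac{\sigma}{\delta}\mu_\sigma=\sqrt{\sigma/\delta}\,\bigl(\sqrt{\sigma/\delta}\,\mu_\sigma\bigr)$, the difference $n_\sigma-w_\sigma$ together with its gradient tends to $0$ (in $L^\infty(0,T;L^2)$ and $L^2(\Omega_T)$ respectively), so $n_\sigma$ inherits the same uniform $L^\infty(0,T;H^1)\cap L^2(0,T;H^2)$ bound, and the control of $\partial_t n_\sigma$ in $\bigl(L^4(0,T;W^{1,s})\bigr)'$ follows by estimating the right-hand side of the reformulated weak equation.

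With these estimates the Aubin--Lions--Simon lemma gives, along a subsequence, $n_\sigma\to n$ strongly in $L^2(0,T;H^1)$ and weakly in $L^2(0,T;H^2)$; hence $w_\sigma\to n$ strongly in $L^2(0,T;H^1)$ and $\Delta w_\sigma\rightharpoonup\Delta n$ weakly in $L^2(\Omega_T)$, while $\mu_\sigma\rightharpoonup\mu$ in the space stated in the theorem. Because only $\sqrt{n_\sigma}\,\nabla\mu_\sigma$ is controlled and $\nabla\mu_\sigma$ is not, the flux must be integrated by parts twice using $\mu_\sigma=w_\sigma^\gamma-\delta\Delta w_\sigma$; this produces exactly the structure of \eqref{CH:weak1_2}, namely a pressure term $\tfrac{\gamma}{\gamma+1}w_\sigma^{\gamma+1}\Delta\chi$, a surface-tension term $-\delta\Delta w_\sigma(\nabla n_\sigma\cdot\nabla\chi+n_\sigma\Delta\chi)$, and remainders carrying a factor $\tfrac{\sigma}{\delta}$. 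Strong convergence of $w_\sigma$ lets me pass to the limit in $w_\sigma^{\gamma+1}$ in $L^1(\Omega_T)$, the surface-tension term converges by weak$\times$strong convergence ($\Delta w_\sigma$ weak against $\nabla n_\sigma,n_\sigma$ strong), and the $\tfrac{\sigma}{\delta}$-remainders vanish. This yields \eqref{CH:weak1_2}, equivalently \eqref{CH:weak1} through the stated vector identity, and testing against time-dependent $\chi$ gives \eqref{CH:weak2}. Passing to the weak limit in $\mu_\sigma=w_\sigma^\gamma-\delta\Delta w_\sigma$ and comparing with $\mu_\sigma\rightharpoonup\mu$ identifies $\mu=n^\gamma-\delta\Delta n$ a.e.

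For the dimension-restricted statement I would use that, for $d\le4$, the embedding $H^2(\Omega)\hookrightarrow L^{2\gamma}(\Omega)$ holds for every $\gamma>1$ (in $d=4$ one has $H^2\hookrightarrow L^q$ for all finite $q$, and $H^2\hookrightarrow L^\infty$ for $d\le3$); since $n(t)\in H^2(\Omega)$ for a.e. $t$, this gives $n^\gamma(t)\in L^2(\Omega)$ and therefore $\mu(t)=n^\gamma(t)-\delta\Delta n(t)\in L^2(\Omega)$. Once $\mu$ is square-integrable, a single integration by parts rewrites \eqref{CH:weak1} as the more natural form \eqref{CH:weak3}.

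The step I expect to be the main obstacle is the identification of the source term, i.e.\ showing $G_\infty=G(\mu)$. Since $G$ is nonlinear and $\mu_\sigma$ converges only weakly, the weak-$\ast$ limit $G_\infty$ of $G(\mu_\sigma)$ need not equal $G(\mu)$ without a compactness improvement, and this is where $d\le4$ is genuinely used a second time. To close the argument I would upgrade $\mu_\sigma\to\mu$ to strong convergence in $L^2(\Omega_T)$: the $w_\sigma^\gamma$ part converges strongly from the strong convergence of $w_\sigma$ together with the embedding above, and I would recover strong convergence of $\Delta w_\sigma$ by passing to the limit in the entropy dissipation identity for $\Phi[n_\sigma]$, thereby matching $\int_{\Omega_T}|\Delta w_\sigma|^2$ with its weak limit, or equivalently by invoking the higher regularity available in the Keller--Segel formulation \eqref{eq:KS2}. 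Strong $L^2$ convergence gives $\mu_\sigma\to\mu$ a.e.\ along a subsequence, so that $G(\mu_\sigma)\to G(\mu)$ by continuity and boundedness of $G$ (assumption \eqref{eq:assumption G}) and dominated convergence, and finally $n_\sigma G(\mu_\sigma)\to nG(\mu)$ by strong$\times$strong convergence.
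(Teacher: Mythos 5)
Your overall architecture (energy/entropy bounds, the decomposition $n_\sigma=w_\sigma+\frac{\sigma}{\delta}\mu_\sigma$, double integration by parts through $\mu_\sigma=w_\sigma^\gamma-\delta\Delta w_\sigma$, vanishing $\sigma$-remainders, weak$\times$strong limits) matches the paper, but your compactness step has a genuine gap. You claim $n_\sigma$ "inherits" a uniform $L^\infty(0,T;H^1)\cap L^2(0,T;H^2)$ bound and then apply Aubin--Lions to get $n_\sigma\to n$ strongly in $L^2(0,T;H^1)$. Neither half holds: $\Delta(n_\sigma-w_\sigma)=\frac{\sigma}{\delta}\Delta\mu_\sigma$, and there is no estimate at all on $\Delta\mu_\sigma$ (the energy/entropy control only $\sqrt{\sigma}\,\nabla\mu_\sigma$ and $\Delta w_\sigma$ in $L^2$, see \eqref{ine111} and \eqref{ine411}), so $n_\sigma$ has no uniform $H^2$ bound; and even applied to $w_\sigma$, Aubin--Lions would require time compactness, i.e.\ a uniform bound on $\partial_t w_\sigma=\partial_t n_\sigma-\frac{\sigma}{\delta}\partial_t\mu_\sigma$, whereas the $\partial_t\mu$ estimate of Proposition~\ref{prop:summary} is valid only for fixed $\sigma$ (the paper notes the underlying $L^\infty$ bound scales like $1/\sigma$). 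This matters because your limit passage in the surface-tension terms needs strong $L^2(\Omega_T)$ convergence of $\nabla w_\sigma$ and $\nabla n_\sigma$ against the merely weakly convergent $\Delta w_\sigma$. The paper closes exactly this hole with a Fréchet--Kolmogorov argument (Proposition~\ref{strongconvw}): time equicontinuity of $\nabla w_\sigma\ast\varphi_\eps$ is transferred from $n_\sigma$ via $\nabla w_\sigma\ast\varphi_\eps=n_\sigma\ast\nabla\varphi_\eps-\frac{\sigma}{\delta}\nabla\mu_\sigma\ast\varphi_\eps$, using that $\sigma\nabla\mu_\sigma\to0$ strongly. Your claimed limit regularity for $n$ is still correct, but it must be deduced from the bounds on $w_\sigma$, $\nabla w_\sigma$, $\Delta w_\sigma$ together with $n_\sigma-w_\sigma\to0$, not from bounds on $n_\sigma$ itself.

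The second gap is the source term. You correctly flag it as the main obstacle, but your fix --- upgrading $\mu_\sigma\to\mu$ to strong $L^2(\Omega_T)$ convergence --- would fail. There is no uniform $L^2(\Omega_T)$ bound on $\mu_\sigma$ even for $d\le4$ (Proposition~\ref{compactnessh2} gives only $L^{\frac{\gamma+1}{\gamma}}(0,T;L^2(\Omega))$); the entropy-dissipation identity cannot be used to match $\int_{\Omega_T}|\Delta w_\sigma|^2$ with its weak limit, because it carries the additional nonnegative terms $\frac{\sigma}{\delta}|\nabla\mu_\sigma|^2$ and $\gamma w_\sigma^{\gamma-1}|\nabla w_\sigma|^2$, the first merely bounded with no reason to vanish as $\sigma\to0$ and the second only lower semicontinuous, so you obtain a one-sided inequality, not norm convergence; and the "higher regularity from the Keller--Segel formulation" is precisely the non-uniform-in-$\sigma$ part of the paper. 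The paper proves something weaker but sufficient: a.e.\ convergence of the product $n_\sigma\mu_\sigma$ --- space compactness from an $L^1$ bound on $\nabla(n_\sigma\mu_\sigma)$ using \eqref{ine511}, time compactness by Fréchet--Kolmogorov via the identity $w_\sigma\mu_\sigma=w_\sigma^{\gamma+1}-\frac{\delta}{2}\Delta(w_\sigma^2)+\delta|\nabla w_\sigma|^2$ --- which yields a.e.\ convergence of $\mu_\sigma$ only on the set $\{n>0\}$, and that suffices to identify $nG(\mu)$ by dominated convergence since the product vanishes where $n=0$. Without replacing your strong-$L^2$ step by an argument of this type, the identification $G_\infty=G(\mu)$ remains open. (A minor further caution: your $H^2\hookrightarrow L^{2\gamma}$ argument gives $\mu(t)\in L^2(\Omega)$ for a.e.\ $t$ but no uniform-in-$\sigma$ integrability; the passage to \eqref{CH:weak3} in the paper uses the uniform $L^{\frac{\gamma+1}{\gamma}}(0,T;L^2)$ bound on $\mu_\sigma$ against strong convergence of $\nabla w_\sigma$ in every $L^p(0,T;L^2)$, $p<\infty$.)
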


The rest of this section is devoted to prove this theorem. 

\subsection{A priori estimates}
Since the two systems~\eqref{eq:KS1}--\eqref{eq:KS2} and~\eqref{eq:CH1}--\eqref{eq:CH2} are equivalent, all estimates proved for a system apply to the other. Based on the construction in Section~\ref{sec:existence}, the energy and entropy structure of the RCH model provide us with the following bounds. Assuming~\eqref{initcond}--\eqref{eq:assumption gamma}, there is a constant $C$ independent of $\gamma,\sigma$, such that, for all $T\ge 0$, 
\begin{align}
\frac{\sigma}{\delta}\int_{\Omega_{T}}|\nabla\mu|^{2}\le C,\quad \frac{\sigma}{\delta}\int_{\Omega}|\mu(t)|^{2}&\le C\quad  \forall t\in (0,T),  \label{ine111}
\\
\delta\int_{\Omega}\Big|\nabla \Big(n(t)-\frac{\sigma}{\delta}\mu(t)\Big)\Big|^{2}&\le C, \quad \forall t\in (0,T),\label{ine311}
\\
\delta\int_{\Omega_{T}}\Big|\Delta\Big(n-\frac{\sigma}{\delta}\mu\Big)\Big|^{2}&\le C, \label{ine411}\\
\int_{\Omega_{T}}n|\nabla\mu|^{2}&\le C, \label{ine511}\\
\gamma\int_{\Omega_{T}}\Big(n-\frac{\sigma}{\delta}\mu\Big)^{\gamma-1}\Big|\nabla \Big(n-\frac{\sigma}{\delta}\mu\Big)\Big|^{2}&\le C, \label{ine611}\\
\int_{\Omega}\frac{(n(t)-\frac{\sigma}{\delta}\mu(t))^{\gamma+1}}{\gamma+1}&\le C, \quad \forall t\in (0,T).  \label{ine711}
\end{align}

Next, we deduce further a priori estimates which are enough to pass to the limit as $\sg$ vanishes.



\begin{proposition}[A priori bounds]
\label{compactnessh2}
Assuming~\eqref{initcond}--\eqref{eq:assumption gamma}, there is a constant $C(T)$ independent of $\gamma$ and $\sigma$ such that the a priori estimates hold
\begin{equation*}
\|n\|_{L^{2}(0,T,H^{1}(\Omega))}\le C,
\qquad
\|\partial_{t}n\|_{(L^{4}(0,T,W^{1,s}(\Omega)))'}\le C,
\end{equation*}
with $s=2$ for $d=1$, $s>2$ for $d=2$ and $s=\frac{4d}{d+2}$ otherwise. Furthermore, we have 
\begin{equation*}
\| w \|_{L^{\infty}(0,T,L^{\gamma+1}(\Omega))}\le C, \qquad 
\f 1 \gamma \| w \|^{\gamma+1}_{L^{\gamma+1}(0,T,L^{\f{d(\gamma+1)}{d-2}}(\Omega))}\le C, \qquad 
\f 1 \gamma \| \mu \|^\f{\gamma+1}{\gamma}_{L^\f{\gamma+1}\gamma (0,T,L^{q}(\Omega))}\le C,
\end{equation*}
with $q=\min(2,  \f{d(\gamma+1)}{(d-2)\gamma})$.
\end{proposition}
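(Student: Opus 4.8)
The plan is to derive all five bounds from the energy and entropy estimates \eqref{ine111}--\eqref{ine711} together with the mass control \eqref{PW}, the Poincaré--Wirtinger inequality, and Sobolev embeddings, keeping every constant uniform in $\gamma$ and $\sigma$. I would treat the bounds in increasing order of difficulty and reserve the control of $\partial_t n$ for last; throughout I use $w=n-\frac{\sigma}{\delta}\mu\ge 0$.

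First, the two bounds involving only $n$ and $w$ are almost immediate. Estimate \eqref{ine711} reads $\|w(t)\|_{L^{\gamma+1}}^{\gamma+1}\le C(\gamma+1)$, so $\|w(t)\|_{L^{\gamma+1}}\le (C(\gamma+1))^{1/(\gamma+1)}$, which is bounded uniformly for $\gamma>1$; this gives the $L^\infty(0,T;L^{\gamma+1})$ bound on $w$. For $\|n\|_{L^2(0,T;H^1)}$ I write $\nabla n=\nabla w+\frac{\sigma}{\delta}\nabla\mu$: the first term is controlled in $L^\infty(0,T;L^2)$ by \eqref{ine311}, and the second satisfies $\frac{\sigma^2}{\delta^2}\int_{\Omega_T}|\nabla\mu|^2=\frac{\sigma}{\delta}\cdot\frac{\sigma}{\delta}\int_{\Omega_T}|\nabla\mu|^2\le \frac{\sigma}{\delta}C$ by \eqref{ine111}, hence is bounded for bounded $\sigma$. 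For the full norm I combine the gradient bound with the mean bound coming from \eqref{PW} through Poincaré--Wirtinger; the same argument yields in passing $n\in L^\infty(0,T;L^2)$, which I reuse below.

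Next, the two weighted bounds on $w$ and $\mu$ rest on the substitution $v:=w^{(\gamma+1)/2}$, for which $|\nabla v|^2=\frac{(\gamma+1)^2}{4}w^{\gamma-1}|\nabla w|^2$. Estimate \eqref{ine611} then gives $\frac{1}{\gamma}\int_{\Omega_T}|\nabla v|^2\le C$ and \eqref{ine711} gives $\frac{1}{\gamma}\|v(t)\|_{L^2}^2\le C$, so the Sobolev embedding $H^1(\Omega)\hookrightarrow L^{2^*}(\Omega)$, $2^*=\frac{2d}{d-2}$, integrated in time yields $\frac{1}{\gamma}\int_0^T\|v(t)\|_{L^{2^*}}^2\,dt\le C$. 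Since $\|v(t)\|_{L^{2^*}}^2=\|w(t)\|_{L^{r}}^{\gamma+1}$ with $r=\frac{d(\gamma+1)}{d-2}$, this is exactly the claimed bound on $w$. For $\mu=w^\gamma-\delta\Delta w$ I split $\|\mu\|_{L^q}\le\|w^\gamma\|_{L^q}+\delta\|\Delta w\|_{L^q}$. When $q=\frac{d(\gamma+1)}{(d-2)\gamma}\le 2$ one has $\gamma q=r$, so $\|w^\gamma\|_{L^q}^{(\gamma+1)/\gamma}=\|w\|_{L^r}^{\gamma+1}$ and the $w^\gamma$ part is controlled by the previous bound; when $q=2$ I pass from $L^{2\gamma}$ to $L^r$ using $2\gamma\le r$ and the boundedness of $\Omega$, whose embedding constant $|\Omega|^{(\gamma+1)(\frac1{2\gamma}-\frac1r)}$ stays uniformly bounded. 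The term $\delta\Delta w$ is handled by \eqref{ine411}: since $q\le 2$ and $\frac{\gamma+1}{\gamma}\le 2$, Hölder in space and time reduces $\frac1\gamma\int_0^T\|\Delta w\|_{L^q}^{(\gamma+1)/\gamma}\,dt$ to a uniformly bounded multiple of $\frac1\gamma\big(\delta\int_{\Omega_T}|\Delta w|^2\big)^{(\gamma+1)/(2\gamma)}$, which is controlled by \eqref{ine411}.

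Finally, the bound on $\partial_t n$ is the delicate point and drives the choice of $s$. Using \eqref{eq:CH1} I pair $\partial_t n=\Div(n\nabla\mu)+nG(\mu)$ with $\chi\in L^4(0,T;W^{1,s}(\Omega))$ and integrate the flux by parts, $\int_{\Omega_T}\partial_t n\,\chi=-\int_{\Omega_T}n\nabla\mu\cdot\nabla\chi+\int_{\Omega_T}nG(\mu)\chi$ (legitimate since $\mu\in L^2(0,T;H^1)$ at fixed $\sigma>0$). I estimate the flux by Cauchy--Schwarz as $\|\sqrt n\,\nabla\mu\|_{L^2(\Omega_T)}\,\|\sqrt n\,\nabla\chi\|_{L^2(\Omega_T)}$, the first factor being bounded by \eqref{ine511}. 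For the second I use Hölder in space and then in time, $\int_{\Omega_T}n|\nabla\chi|^2\le\int_0^T\|n\|_{L^{2^*}}\|\nabla\chi\|_{L^{s}}^2\,dt\le\|n\|_{L^2(0,T;L^{2^*})}\|\nabla\chi\|_{L^4(0,T;L^s)}^2$, which forces $\frac{s}{s-2}=2^*$, i.e. exactly $s=\frac{4d}{d+2}$ in dimension $d\ge3$ (with $s=2$ for $d=1$ and any $s>2$ for $d=2$, where $H^1$ embeds in every $L^p$). The factor $\|n\|_{L^2(0,T;L^{2^*})}$ is finite by the $L^2(0,T;H^1)$ bound and Sobolev. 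The source term is controlled directly by $|G(\mu)|\le\|G\|_\infty$ from \eqref{eq:assumption G} together with $n\in L^\infty(0,T;L^2)$ and $W^{1,s}\hookrightarrow L^2$. The main obstacle is precisely this exponent matching: one must bound the degenerate flux $n\nabla\mu$ only through the product $\sqrt n\cdot(\sqrt n\,\nabla\mu)$, since $\nabla\mu$ alone is not controlled uniformly as $\sigma\to0$, and then close the estimate using only the space-time integrability of $n$ coming from $L^2(0,T;H^1)$, which is exactly what pins down $s=\frac{4d}{d+2}$.
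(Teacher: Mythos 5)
Your proof is correct and follows essentially the same route as the paper: the $L^2(0,T;H^1)$ bound from \eqref{ine111}--\eqref{ine311} with Poincar\'e--Wirtinger, the substitution $v=w^{(\gamma+1)/2}$ combining \eqref{ine611}, \eqref{ine711} and Sobolev, the split $\mu=w^\gamma-\delta\Delta w$ with \eqref{ine411}, and the duality estimate for $\partial_t n$ via $n\nabla\mu=n^{1/2}(n^{1/2}\nabla\mu)$ with \eqref{ine511}, which pins down $s=\frac{4d}{d+2}$ exactly as in the paper. The only differences are cosmetic: you make explicit the case split behind $q=\min\bigl(2,\frac{d(\gamma+1)}{(d-2)\gamma}\bigr)$, which the paper glosses over, and you control the source term through $n\in L^\infty(0,T;L^2)$ and $W^{1,s}\hookrightarrow L^2$ rather than the paper's $\|n\|_{L^r}\|\varphi\|_{L^{s/2}}$ pairing.
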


\begin{proof}
From Inequalities~\eqref{ine111}--\eqref{ine311}, we deduce that $\nabla n\in L^{2}(0,T,L^{2}(\Omega))$. Thus, using the Poincaré-Wirtinger inequality, we obtain that $n\in L^{2}(0,T,H^{1}(\Omega))$ uniformly in $\gamma,\sigma$. Next, we recall that in dimension $d$, $H^{1}\hookrightarrow L^{r}$ where $r=\frac{2d}{d-2}$ for $d>2$, $1\le r<\infty$ for $d=2$, and $1\le r\le\infty$ for $d=1$. Thus, $n^{1/2}\in L^{4}(0,T,L^{2r}(\Omega))$. Therefore, for all $\varphi\in L^{4}(0,T,W^{1,s}(\Omega))$, we can compute 
\begin{align*}
\Big|\int_{\Omega}\partial_{t}n\varphi\Big|&=\Big|\int_{\Omega}n^{1/2}n^{1/2}\nabla\mu\cdot\nabla\varphi+\int_{\Omega}n G(\mu)\varphi\Big|\\
&\le \|n^{1/2}\|_{L^{2r}(\Omega)}\|n^{1/2}\nabla\mu\|_{L^{2}(\Omega)}\|\nabla\varphi\|_{L^{s}}+C\|n\|_{L^{r}(\Omega)}\|\varphi\|_{L^{s/2}(\Omega)},\\
\end{align*}
because $\frac{2r}{r-1}=\frac{4d}{d+2}:=s$ for $d>2$. For $d\le 2$, we can consider any $r>1$. Assumption~\eqref{eq:assumption G} is used to control the source term in $L^{\infty}$. Integrating in time, using the triangle inequality on the integrals, Inequality~\eqref{ine511} and Hölder's inequality, we obtain the second estimate,
\begin{align*}
\Big|\int_{\Omega_{T}}\partial_{t}n\varphi\Big|\le C\|\varphi\|_{L^{4}(0,T,W^{1,s}(\Omega))}. 
\end{align*}
\\
The first Lebesgue estimate for $w$ is just an application of Inequality~\eqref{ine711}.
For the second Lebesgue estimate for $w$ we write Inequality~\eqref{ine611} as
\begin{equation*}
\frac{4\gamma}{(\gamma+1)^{2}}\int_{\Omega_{T}}|\nabla w^{\frac{\gamma+1}{2}}|^{2}\le C.	
\end{equation*}
\begin{sloppypar}
With Sobolev's embedding and the Poincaré-Wirtinger inequality, we obtain
\begin{equation*}
\frac{1}{\gamma}\|w^{\frac{\gamma+1}{2}}\|^{2}_{L^{2}(0,T,L^\frac{2d}{d-2}(\Omega))}\le C.     
\end{equation*}
This yields the second estimate. In addition, this estimate allows us to bound $w^{\gamma}$ uniformly in $L^\f{\gamma+1}\gamma (0,T,L^{\f{d(\gamma+1)}{(d-2)\gamma}}(\Omega))$.
Finally, we recall the last formula for $\mu$ in~\eqref{eq:defofw}, $\mu=w^{\gamma}-\Delta w$. Using the previous bound on $w^{\gamma}$ and the bound~\eqref{ine411}, we obtain the estimate on $\mu$.
\end{sloppypar}
This ends the proof of Proposition~\ref{compactnessh2}. 
\end{proof}
\commentout{
\begin{remark}
In the case $\sigma= 0$, we have $n\in L^{\infty}(0,T,H^{1}(\Omega))$ from Inequality \eqref{ine311} and we find an $(L^{2}(0,T,W^{1,s}(\Omega)))'$ bound on $\partial_{t}n$.
\end{remark}
}


\commentout{
\begin{proposition}[Compactness in time for $w$ independently of $\sg$]
\label{compactnessformu}
There exists a constant C independent of $\sg$ such that $w$ satisfies
\begin{equation*}
\|\partial_{t}w\|_{L^{2}(0,T,L^{2}(\Omega))}\le C.
\end{equation*}
\end{proposition}
\begin{proof}
We recall~\eqref{eq:forw}
\begin{equation*}
-\Delta w_\eps+w_\eps^{\gamma}+w_\eps=n_\eps,	
\end{equation*}
As we have strong compactness for $n_\eps$, it is enough to find strong compactness on $w_\eps$ to find strong compactness on $\mu_\eps$. We use the notation $z_\eps=\partial_{t}w_\eps$ and we aim to find estimates on $z_\eps$. 
Differentiating the previous equation with respect to time yields (in a distributional sense)
\begin{equation*}
-\Delta z_\eps+\gamma w_\eps^{\gamma-1}z_\eps+z_\eps=\partial_{t}n_\eps.	
\end{equation*}
Therefore, to find the expected regularity of $z_\eps$, one consider a function $\Psi\in L^{2}(0,T,L^{2}(\Omega))$, and search for $\phi$ solution of $-\Delta\phi+\gamma w^{\gamma-1}\phi+\phi=\Psi$. Then, the only thing remaining to prove is that $\|\phi\|_{L^{2}(0,T,H^{2}(\Omega))}\le C\|\Psi\|_{L^{2}(0,T,L^{2}(\Omega))}$. 
The $L^{2}(0,T,H^{1}(\Omega))$-bound is obtained after multiplying the equation by $\phi$ and integrating over $\Omega_{T}$. Then, the $L^{2}(0,T,H^{2}(\Omega))$-bound appears after writing $\Delta\phi=-\Psi+\phi+\gamma w^{\gamma-1}_\eps\phi$ and from the triangular inequality. We recall that since $w^{\gamma}$ is in $L^{2}(0,T,H^{1}(\Omega))$, so $w^{\gamma-1}$ which satisfies $|w^{\gamma-1}_\eps|\le 1+ |w^{\gamma}_\eps|$ and is in $L^{2}(0,T,L^{2}(\Omega))$.\\
Using Proposition~\ref{compactnessh2}, we obtain that for every $\Psi\in L^{2}(0,T,L^{2}(\Omega))$,
\begin{equation*}
 \int_{\Omega_{T}}z_\eps\Psi=\int_{\Omega_{T}}\partial_{t}n_\eps\phi\le C\|\phi\|_{L^{2}(0,T,H^{2}(\Omega))}\le C\|\Psi\|_{L^{2}(0,T,L^{2}(\Omega))}.
\end{equation*}
This yields the result. 
\end{proof}
}

\subsection{Convergence \texorpdfstring{$\sg \to 0$}{}}\label{section:sig}

\commentout{To study the limit $\sigma\to 0$, the main difficulty is the strong convergence of $\mu$ and $\nabla w$ uniformly in $\sigma$. To overcome we need to consider a weaker type of solutions. We recall that $\mu=w^{\gamma}-\delta\Delta w$, and since we have uniform bounds in $\sigma$ for $\Delta w$ thanks to the entropy estimate, elliptic regularity gives us that $w$ is uniformly bounded in $L^{2}(0,T,H^{2}(\Omega))$. Furthermore, from Inequalities~\eqref{ine611}, \eqref{ine711} and the Poincaré-Wirtinger inequality, we have that $w^{\gamma}$ is bounded uniformly in $L^{\frac{\gamma+1}{\gamma}}(0,T,L^{\frac{d(\gamma+1)}{(d-2)\gamma}}(\Omega))\cap L^{\infty}(0,T,L^{\frac{\gamma+1}{\gamma}}(\Omega))$. Therefore, we are able to control $\mu$. At the end, and using similar arguments as in \cite{perthame_poulain}, we find 
}

To study the limit $\sigma\to 0$, we write the equivalent  RCH and GKS systems as
\begin{align*}
\partial_{t} n_\sigma-\Div(n_\sigma\nabla \mu_\sigma)=n_\sigma G(\mu_\sigma),\quad \text{in}\quad &(0,+\infty)\times\Omega,\\
\mu_\sigma =w^{\gamma}_\sigma-\delta\Delta w_\sigma,\quad w_\sigma=n_\sigma-\f\sg\delta\mu_\sigma\quad\text{in}\quad &(0,+\infty)\times\Omega .
\end{align*}

The main difficulty is the convergence of the term $n_{\sigma}\nabla\mu_{\sigma}$ in the first equation. We know that $n_{\sigma}\to n$ strongly in $L^{2}(\Omega_{T})$  thanks to the Lions-Aubin lemma and Proposition~\ref{compactnessh2}. In fact, the strong convergence is obtained in $L^{<\infty}(0,T,L^{2}(\Omega))$ (see below). No useful information can be obtained on $\nabla\mu_{\sigma}$  and for this reason,  after an integration by parts, it remains to prove the weak convergence in $L^{>1}(0,T,L^{2}(\Omega))$ of $\mu_{\sigma}$. The estimate on $\mu_{\sigma}$ given in Proposition~\ref{compactnessh2} is not enough in high dimension (i.e. $d>4$) and we need to consider weak solutions given by~\eqref{CH:weak1}-\eqref{CH:weak1_2} in the limit.  Strong convergence of $\mu_{\sigma}$ is also needed in order to identify the source term. Finally, with the definition of $w_{\sigma}$, notice that Inequalities~\eqref{ine111} show that $w_{\sigma}$ is a compact perturbation of $n_{\sigma}$ (see Proposition~\ref{strongconvw}) which has much higher regularity than $n_{\sigma}$ due to the entropy/energy structure of our system. Therefore the limit in the terms $\Div(n_{\sigma}\nabla\mu_{\sigma})$ and $n_{\sigma}G(\mu_{\sigma})$ is treated by writing $n_{\sigma}=w_{\sigma}+\text{'compact perturbation'}$. 

\begin{remark}
In the following proof and until the end,  we consider convolutions which we integrate on a bounded domain to prove compactness properties. This is just a formal writing because the functions we consider are only defined on $\Omega$. To make it rigorous, instead of considering a function $f$ we need to consider $f\zeta$ where $\zeta$ is a truncature function. Then, the compactness has to be obtained in domains $\Omega_{\eps}$ for every $\eps$ small enough where $\Omega_{\eps}\subset\Omega$ and every points in $\Omega_{\eps}$ is at distance $\eps$ of the boundary of $\Omega$. From now on, we ignore this technical issue in the proof below.    
\end{remark}

\begin{proposition}
\label{strongconvw}
With the assumptions of Theorem~\ref{convKSCH} we have $\nabla w_{\sigma},\nabla n_{\sigma}\to \nabla n$ strongly in $L^{2}(\Omega_{T})$ when $\sigma\to 0$.
\end{proposition}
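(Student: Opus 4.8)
The plan is to use the relation $w_\sigma = n_\sigma - \tfrac{\sigma}{\delta}\mu_\sigma$ to treat $w_\sigma$ as a compact perturbation of $n_\sigma$, thereby reducing the statement to a single strong-gradient convergence, and then to upgrade the already-known strong $L^2(\Omega_T)$ convergence of $n_\sigma$ to convergence of gradients by a norm-convergence argument.

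First I would dispose of the perturbation term. Since $n_\sigma - w_\sigma = \tfrac{\sigma}{\delta}\mu_\sigma$, estimate \eqref{ine111} gives
\[
\Big\|\tfrac{\sigma}{\delta}\mu_\sigma\Big\|_{L^\infty(0,T,L^2(\Omega))}^2 \le \tfrac{C\sigma}{\delta},
\qquad
\Big\|\tfrac{\sigma}{\delta}\nabla\mu_\sigma\Big\|_{L^2(\Omega_T)}^2 \le \tfrac{C\sigma}{\delta},
\]
so that $n_\sigma - w_\sigma \to 0$ and $\nabla n_\sigma - \nabla w_\sigma \to 0$ strongly in $L^2(\Omega_T)$ as $\sigma\to 0$. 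Because $n_\sigma \to n$ strongly in $L^2(\Omega_T)$ by the Lions--Aubin lemma and Proposition~\ref{compactnessh2}, it follows that $w_\sigma \to n$ strongly in $L^2(\Omega_T)$ as well, and the whole claim reduces to proving $\nabla w_\sigma \to \nabla n$ strongly in $L^2(\Omega_T)$; the convergence of $\nabla n_\sigma$ is then immediate from the first display.

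Next I would record that $w_\sigma$ is more regular than $n_\sigma$: \eqref{ine411} bounds $\Delta w_\sigma$ uniformly in $L^2(\Omega_T)$, and together with \eqref{ine311} this gives $w_\sigma$ bounded in $L^2(0,T,H^2(\Omega))$. Passing to a subsequence, $\nabla w_\sigma \rightharpoonup \nabla n$ and $\Delta w_\sigma \rightharpoonup \Delta n$ weakly in $L^2(\Omega_T)$, the limits being identified through the strong $L^2(\Omega_T)$ convergence $w_\sigma \to n$ and the fact that $n \in L^2(0,T,H^2(\Omega))$ from Theorem~\ref{convKSCH}. The crux is then the norm-convergence step: using the Neumann condition $\partial_\nu w_\sigma = 0$ from \eqref{boundary}, integration by parts yields
\[
\int_{\Omega_T}|\nabla w_\sigma|^2 = -\int_{\Omega_T} w_\sigma\,\Delta w_\sigma .
\]
The right-hand side is a product of a strongly convergent factor $w_\sigma \to n$ and a weakly convergent factor $\Delta w_\sigma \rightharpoonup \Delta n$ in $L^2(\Omega_T)$, hence converges to $-\int_{\Omega_T} n\,\Delta n$. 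Since the zero normal trace is preserved under weak $H^2$ convergence (the map $u\mapsto \partial_\nu u$ is weakly continuous from $H^2(\Omega)$ into $H^{1/2}(\partial\Omega)$), one has $-\int_{\Omega_T} n\,\Delta n = \int_{\Omega_T}|\nabla n|^2$, so $\|\nabla w_\sigma\|_{L^2(\Omega_T)} \to \|\nabla n\|_{L^2(\Omega_T)}$. Combined with the weak convergence $\nabla w_\sigma \rightharpoonup \nabla n$, convergence of norms in the Hilbert space $L^2(\Omega_T)$ forces strong convergence, which together with the reduction above gives $\nabla n_\sigma \to \nabla n$ strongly.

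The step I expect to be the main obstacle is this last integration-by-parts identity: one must ensure the strong-times-weak product converges globally on $\Omega_T$ and, above all, that the limit $n$ genuinely carries the zero Neumann condition so that $-\int_{\Omega_T} n\,\Delta n$ may be replaced by $\int_{\Omega_T}|\nabla n|^2$. The truncation/mollification caveat raised in the preceding Remark also has to be handled so that the boundary terms in the integration by parts are licit up to $\partial\Omega$.
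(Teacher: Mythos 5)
Your proof is correct, but it takes a genuinely different route from the paper's. Both arguments begin identically: reduce everything to $\nabla w_\sigma$ via $\|\tfrac{\sigma}{\delta}\nabla\mu_\sigma\|_{L^2(\Omega_T)}\to 0$ from \eqref{ine111} and the strong $L^2(\Omega_T)$ convergence of $n_\sigma$. The paper then proves strong convergence of $\nabla w_\sigma$ by the Fr\'echet--Kolmogorov theorem: spatial equicontinuity from the uniform $L^{\infty}(0,T,L^2)\cap L^2(0,T,H^1)$ bound on $\nabla w_\sigma$ (Inequalities \eqref{ine311}--\eqref{ine411} plus elliptic regularity), time equicontinuity from the decomposition $\nabla w_\sigma=\nabla n_\sigma-\tfrac{\sigma}{\delta}\nabla\mu_\sigma$ handled as in Appendix~\ref{app:FK}, and finally interpolation with an $L^p(\Omega_T)$ bound, $p>2$, to upgrade $L^1$ to $L^2$ convergence. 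You instead use the Hilbert-space criterion ``weak convergence plus convergence of norms,'' obtaining $\|\nabla w_\sigma\|_{L^2(\Omega_T)}\to\|\nabla n\|_{L^2(\Omega_T)}$ from the identity $\int_{\Omega_T}|\nabla w_\sigma|^2=-\int_{\Omega_T}w_\sigma\Delta w_\sigma$ and the strong-times-weak pairing $w_\sigma\to n$, $\Delta w_\sigma\rightharpoonup\Delta n$. This is shorter, avoids mollifiers entirely, and thereby sidesteps the truncation caveat the paper flags in the Remark preceding the proposition; its one delicate point is precisely the one you isolate, namely that the limit retains the zero Neumann trace, and your justification is sound: the solutions built in Section~\ref{sec:existence} satisfy $\partial_\nu w_\sigma=0$ in the trace sense (Galerkin basis \eqref{eq:base-galerkin}, boundary conditions \eqref{boundaryR}), and $u\mapsto\partial_\nu u$ is a bounded, hence weakly continuous, linear map on $L^2(0,T,H^2(\Omega))$. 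One small repair: do not cite Theorem~\ref{convKSCH} for $n\in L^2(0,T,H^2(\Omega))$, since the proposition is an ingredient of that theorem's proof and the reference would be circular; the citation is also unnecessary, because the weak convergence $w_\sigma\rightharpoonup n$ in $L^2(0,T,H^2(\Omega))$ already yields this regularity and identifies $\Delta w_\sigma\rightharpoonup\Delta n$ distributionally. What the paper's route buys in exchange for its technicalities is robustness and reuse: the Fr\'echet--Kolmogorov scheme needs neither the boundary condition nor the Hilbert structure of $L^2$, and the same machinery serves again in Step~4 of the theorem's proof and in Appendix~\ref{app:FK}.
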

\begin{proof}
Inequalities~\eqref{ine111} show that $\sigma\mu_{\sigma},$ $\sigma\nabla\mu_{\sigma}\to 0$ in $L^{2}(\Omega_{T})$. In addition, Proposition~\ref{compactnessh2} yields the weak convergence of $\nabla w_{\sigma},\nabla n_{\sigma}\rightharpoonup \nabla n$. It remains to prove strong convergence. We rely on arguments similar to those of Appendix~\ref{app:FK} using the Fréchet-Kolmogorov theorem. \\We know that $\nabla w_{\sigma}$ is bounded uniformly in $L^{\infty}(0,T,L^{2}(\Omega))\cap L^{2}(0,T,H^{1}(\Omega))$ using Inequalities~\eqref{ine311}-\eqref{ine411} and elliptic regularity. Then, by interpolation we have an $L^{p}(\Omega_{T})$ bound for some $p>2$. It remains to prove strong convergence in $L^{1}$.\\
We already have compactness in space thanks to the $H^{1}$ bound. With the notations of Appendix~\ref{app:FK}, it only remains to prove compactness in time, that means
 \begin{equation}
 \label{proofstrongconvergence}
 \int_{0}^{T-h}\int_{\Omega}|\nabla w_{\sigma}(t+h,\cdot)\ast\varphi_{\eps}(x)-\nabla w_{\sigma}(t,\cdot)\ast\varphi_{\eps}(x)|dxdt\le \theta(h),\quad\text{$\theta(h)\to 0$ when $h\to 0$},
 \end{equation} 
uniformly in $\sigma$.

From \eqref{eq:defofw} we can write

 \begin{multline*}
 \int_{0}^{T-h}\int_{\Omega}|\nabla w_{\sigma}(t+h,\cdot)\ast\varphi_{\eps}(x)-\nabla w_{\sigma}(t,\cdot)\ast\varphi_{\eps}(x)|dxdt\le \int_{0}^{T-h}\int_{\Omega}|(n_{\sigma}(t+h,\cdot)-n_{\sigma}(t,\cdot))\ast\nabla \varphi_{\eps}(x)|dxdt\\+\sigma\int_{0}^{T-h}\int_{\Omega}|(\nabla\mu_{\sigma}(t+h,\cdot)-\nabla\mu_{\sigma}(t,\cdot))\ast\varphi_{\eps}(x)|dxdt.
 \end{multline*}

The term $\sigma\nabla\mu_{\sigma}$ converges strongly to 0 in $L^{2}(\Omega_{T})$. This provides equicontinuity of the last term. The first term is dealt in the same way as in Appendix~\ref{app:FK}. This yields the strong convergence of $\nabla w_{\sigma}$.
Using once again the previous decomposition and  $\sigma\nabla\mu_{\sigma}\to 0$ show that $\nabla n_{\sigma}$ converges strongly.
\end{proof}

Now, we can prove Theorem~\ref{convKSCH}.

\begin{proof}
\textit{Step 1: Strong compactness for $w_{\sigma},n_{\sigma},\nabla w_{\sigma},\nabla n_{\sigma}$.} This step is a consequence of Proposition~\ref{compactnessh2} and Proposition~\ref{strongconvw}.

\textit{Step 2: Convergence of $n\nabla\mu$.}
Now, most of the convergences come from properties of the weak limit and the bounds provided at the beginning of this section. For the case of any dimension, we write $n_{\sigma}\nabla\mu_{\sigma}$ as 
\begin{equation*}
n_{\sigma}\nabla\mu_{\sigma}=w_{\sigma}\nabla\mu_{\sigma}+\frac{\sigma}{\delta}\mu_{\sigma}\nabla\mu_{\sigma}.
\end{equation*}
The last term converges strongly to 0 as $\sigma\to 0$. Indeed, from Inequalities~\eqref{ine111} together with the Sobolev injection and the Poincaré-Wirtinger inequality, we know that 
\begin{equation*}
\|\sigma^{1/2}\mu_{\sigma}\|_{L^{r}(\Omega_{T})}\le C,
\end{equation*}
 for some $r>2$ and $C$ independent of $\sigma$. Using also that $\mu$ is bounded in $L^{1}(\Omega_{T})$ and interpolation we find
 \begin{equation}
  \label{musigma}
  \|\sigma^{1/2}\mu\|_{L^{2}(\Omega_{T})}\le C\sigma^{\frac{1-\theta}{2}},\quad\theta=\frac{r}{2(r-1)}<1.  
 \end{equation}
 With Inequalities~\eqref{ine111}-\eqref{musigma} we find $\sigma\mu_{\sigma}\nabla\mu_{\sigma}=(\sigma^{1/2}\mu_{\sigma})(\sigma^{1/2}\nabla\mu_{\sigma})\to 0$ strongly.
We now treat the first term on the right hand side. We recall that $\mu_{\sigma}=w_{\sigma}^{\gamma}-\delta\Delta w_{\sigma}$ and therefore we write $w_{\sigma}\nabla\mu_{\sigma}$ as
\begin{equation}
\label{weakform3}
  w_{\sigma}\nabla\mu_{\sigma}=\frac{\gamma}{\gamma+1}\nabla w_{\sigma}^{\gamma+1}-\delta\Big[\nabla(w_{\sigma}\Delta w_{\sigma})+\nabla\frac{|\nabla w_{\sigma}|^{2}}{2}-\sum_{i=1}^{d}\partial_{i}(\partial_{i}w_{\sigma}\nabla w_{\sigma})\Big].
\end{equation}
We recall the strong convergence of $w_{\sigma}$ and $\nabla w_{\sigma}$ in $L^{2}(\Omega_{T})$ to $n,\nabla n$ and we have the weak convergence of $\Delta w_{\sigma}$ to $\Delta n$ thanks to Inequality~\eqref{ine411}. Finally we obtain the 
convergence in the distributional sense of the bracket in the right hand side. For the first term on the right hand side, we use Proposition~\ref{compactnessh2}. This provides that $w_{\sigma}^{\gamma+1}$ is bounded uniformly $L^{1}(0,T,L^{p}(\Omega))\cap L^{\infty}(0,T,L^{1}(\Omega))$ for some $p>1$ and therefore by interpolation in some $L^{q}(0,T,L^{r}(\Omega))$ for some $1<q<\infty$ and $r=\frac{1}{1-1/q+1/(qp)}$. Using the strong convergence of $w_\sigma$ and the Lebesgue dominated convergence theorem allows us to idenfity $w^{\gamma+1}$ in the limit. Using integration by parts we obtain the weak formulations~\eqref{CH:weak1}-\eqref{CH:weak1_2}.\\
The regularity of $n, \nabla n, \Delta n$ in the limit comes from the uniform bounds on $w_{\sigma}, \nabla w_{\sigma}, \Delta w_{\sigma}$ and the convergence of $\sigma\mu_{\sigma}$  to 0. \\
In dimension $d\le 4$ we have better regularity for $\mu$ and we can find another weak formulation. This is achieved in the following step.

\textit{Step 3: The weak formulation~\eqref{CH:weak3} in dimension $d\le 4$.}
In the case $d\le 4$ we have
\begin{equation*}
\|\mu_{\sigma}\|_{L^{\frac{\gamma+1}{\gamma}}(0,T,L^{2}(\Omega_{T}))}\le C,\quad \|\nabla w_{\sigma}\|_{L^{\infty}(0,T,L^{2}(\Omega))}\le C,
\end{equation*}
thanks to Proposition~\ref{compactnessh2} and Inequality~\eqref{ine311}. 
We write
\begin{equation*}
	\mu_{\sigma}\nabla n_{\sigma}=\mu_{\sigma}\nabla w_{\sigma}+\frac{\sigma}{\delta}\mu_{\sigma}\nabla\mu_{\sigma}.
\end{equation*}
We know from above that the second term of the right hand side strongly converges to 0. For the first term of the right hand side we know that $\nabla w_{\sigma}\to \nabla n$ strongly in $L^{2}(\Omega_{T})$. With the previous bound on $\nabla w_{\sigma}$ the convergence actually holds in every $L^{p}(0,T,L^{2}(\Omega))$ for $p<\infty$. Using the weak convergence of $\mu_{\sigma}$ we find the weak convergence of the product.\\
The weak convergence of $n_{\sigma}\mu_{\sigma}$ is similar. Therefore, with integration by parts of the term $\Div(n_{\sigma}\nabla\mu_{\sigma})$ against a test function $\chi$ we find the result.\\

\textit{Step 4: Identification of the source term when $d\le 4$.}
The last difficulty in the proof is to identify the source term (i.e $nG(\mu))$. Indeed, we do not know that $\mu_{\sigma}$ converges a.e. (up to a subsequence). However, we know that $n_{\sigma}$ converges a.e to $n$ and $G(\mu_{\sigma})$ is bounded. It remains to prove that $\mu_{\sigma}$ converges a.e. in the zone $\{n>0\}$.\\
For this reason we search for estimates on $n_{\sigma}\mu_{\sigma}$ to prove its convergence almost everywhere. Then, the convergence of $n_{\sigma}\mu_{\sigma}$ and $n_{\sigma}$ yields the convergence a.e. of $\mu_{\sigma}$ in the zone $\{n>0\}$. Now, we write $n_{\sigma}\mu_{\sigma}=w_{\sigma}\mu_{\sigma}+\frac{\sigma}{\delta}\mu_{\sigma}^{2}$ and compute
\[\nabla(n_{\sigma}\mu_{\sigma})=n_{\sigma}^{1/2}n_{\sigma}^{1/2}\nabla\mu_{\sigma}+\mu_{\sigma}\nabla w_{\sigma}+\frac{3\sigma}{2\delta}\nabla(\mu_{\sigma}^{2}).\]
As previously, the last term is bounded uniformly in $L^{1}(\Omega_{T})$ (and even converges to 0). The first term is also bounded in $L^{1}(\Omega_{T})$ with Proposition~\ref{compactnessh2} and Inequality~\eqref{ine511}.
 For the second term we use that $\mu_{\sigma}$ is bounded uniformly in $L^{\frac{\gamma+1}{\gamma}}(0,T,L^{\frac{d(\gamma+1)}{(d-2)\gamma}}(\Omega))$. In dimension $d\le 4$, Inequality~\eqref{ine311} provides a bound on the second term. Therefore we have compactness in space. 
The proof of the time compactness uses arguments and lemmas reported in Appendix~\ref{app:FK}. Since $\nabla(w\mu)$ is bounded, it only remains to show 
\begin{equation*}
\int_{0}^{T-h}\int_{\Omega}|w_{\sigma}(t+h,\cdot)\mu_{\sigma}(t+h,\cdot)\ast\varphi_{\eps}(x)-w_{\sigma}(t,\cdot)\mu_{\sigma}(t,\cdot)\ast\varphi_{\eps}(x)|dxdt\underset{h\to 0}{\longrightarrow} 0,\quad\text{uniformly in $\sigma$},
\end{equation*}
where the smooth functions $\varphi_{\eps}$ are defined in the Appendix~\ref{app:FK}.

We know using~\eqref{eq:defofw} that 
\begin{equation*}
w_{\sigma}\mu_{\sigma}=w_{\sigma}^{\gamma+1}-\delta w_{\sigma}\Delta w_{\sigma}=w_{\sigma}^{\gamma+1}-\frac{\delta}{2}\Delta w_{\sigma}^{2}+\delta|\nabla w_{\sigma}|^{2}.
\end{equation*}
Since $w_{\sigma}$ converges strongly in $L^{2}(\Omega_{T})$ and is bounded in $L^{\infty}(0,T,L^{\gamma+1}(\Omega))\cap L^{\gamma+1}\big((0,T),L^{\f{d(\gamma+1)}{d-2}}(\Omega)\big)$ thanks to Inequality~\eqref{ine711} and Proposition~\ref{compactnessh2}, we conclude by interpolation the strong convergence of $w_{\sigma}$ in $L^{\gamma+1}(\Omega_{T})$. Therefore the first term is equicontinuous by the converse of the Fréchet-Kolmogorov theorem.

It only remains to estimate
\begin{multline*}
 \int_{0}^{T-h}\int_{\Omega}|[w_{\sigma}(t+h,\cdot)\Delta w_{\sigma}(t+h,\cdot)-w_{\sigma}(t,\cdot)\Delta w_{\sigma}(t,\cdot)]\ast\varphi_{\eps}(x)|dxdt\\ \le 
 \int_{0}^{T-h}\int_{\Omega}\Big|\int_{\Omega}(|\nabla w_{\sigma}(t,y)|^{2}-|\nabla w_{\sigma}(t+h,y)|^{2})\varphi_{\eps}(x-y)\Big|dydxdt\\+ \int_{0}^{T-h}\int_{\Omega}\Big|\int_{\Omega}( w_{\sigma}^{2}(t+h,y)- w_{\sigma}^{2}(t,y))\Delta\varphi_{\eps}(x-y)\Big|dydxdt.
\end{multline*}

With the strong convergence of $\nabla w_{\sigma}$ and $w_{\sigma}^{2}$ we find the equicontinuity of the second and third term in the decomposition. This yields the equicontinuity with respect to time of $w\mu$.
\commentout{
We split the integral in two parts and write 
\begin{align*}
\int_{0}^{T-h}\int_{\Omega}|n_{\sigma}(t+h,\cdot)\mu_{\sigma}(t+h,\cdot)\ast\varphi_{\eps}(x)-n_{\sigma}(t,\cdot)\mu_{\sigma}(t,\cdot)\ast\varphi_{\eps}(x)|dxdt\le\\
\int_{0}^{T-h}\int_{\Omega}|(n_{\sigma}(t+h,\cdot)-n_{\sigma}(t,\cdot))\mu_{\sigma}(t+h,\cdot)\ast\varphi_{\eps}(x)|dxdt\\+\int_{0}^{T-h}\int_{\Omega}|(\mu_{\sigma}(t+h,\cdot)-\mu_{\sigma}(t,\cdot))n_{\sigma}(t,\cdot)\ast\varphi_{\eps}(x)|dxdt.
\end{align*}

The first integral can be bounded by 
\begin{align*}
\int_{0}^{T-h}\int_{\Omega}|(n_{\sigma}(t+h,\cdot)-n_{\sigma}(t,\cdot))\mu_{\sigma}(t+h,\cdot)\ast\varphi_{\eps}(x)|dxdt\le \\\int_{0}^{T-h}\int_{\Omega}\|n_{\sigma}(t+h,\cdot)-n_{\sigma}(t,\cdot)\|_{L^{\alpha}(\Omega)}\|\mu_{\sigma}(t+h,\cdot)\|_{L^{\alpha'}(\Omega)}dxdt.
\end{align*}
where $\alpha<2d/(d-2)$ but sufficiently close to this bound such that $\alpha'>4/3$ is close to $4/3$. Indeed by interpolation on $w^{\gamma}$ one can show that $\|\mu_{\sigma}\|_{L^{2}(0,T,L^{\beta}(\Omega)}\le C$ for some $\beta>4/3$ in dimension $d\le 4$.
Therefore using  Appendix~\ref{app:FK}, we obtain
\begin{equation*}
\int_{0}^{T-h}\int_{\Omega}|(n_{\sigma}(t+h,\cdot)-n_{\sigma}(t,\cdot))\mu_{\sigma}(t+h,\cdot)\ast\varphi_{\eps}(x)|dxdt\le C\|n_{\sigma}(t+h,\cdot)-n_{\sigma}(t,\cdot)\|_{L^{2}(0,T,L^{\alpha}(\Omega))}\to 0.
\end{equation*}
It remains to treat the last term. 
We write $n=w+\frac{\sigma}{\delta}\mu$ and therefore the last term can be written as

\begin{multline*}
\int_{0}^{T-h}\int_{\Omega}|(\mu_{\sigma}(t+h,\cdot)-\mu_{\sigma}(t,\cdot))n_{\sigma}(t,\cdot)\ast\varphi_{\eps}(x)|dxdt\le\\ \int_{0}^{T-h}\int_{\Omega}|(\mu_{\sigma}(t+h,\cdot)-\mu_{\sigma}(t,\cdot))w_{\sigma}(t,\cdot)\ast\varphi_{\eps}(x)|dxdt+\int_{0}^{T-h}\int_{\Omega}|(\mu_{\sigma}(t+h,\cdot)-\mu_{\sigma}(t,\cdot))\frac{\sigma}{\delta}\mu_{\sigma}(t,\cdot)\ast\varphi_{\eps}(x)|dxdt\\=I_{1}+I_{2}.
\end{multline*}

We first treat $I_{2}$. With Hölder inequality we obtain 
\begin{align*}
I_{2}\le C \int_{0}^{T-h}\|\sigma(\mu_{\sigma}(t+h,\cdot)-\mu_{\sigma}(t,\cdot))\|_{L^{2}(\Omega)},
\end{align*}
Since $\sigma\mu\to 0$ strongly in $L^{2}$ we find the equicontinuity  by the converse of the Fréchet-Kolmogorov theorem.
 \textcolor{red}{Il reste la convergence du dernier terme}}

Therefore thanks to the Fréchet-Kolmogorov theorem we have strong convergence in some $L^{p}$ and the convergence a.e. up to a subsequence of $\mu$ in the zone $n>0$. The Lebesgue dominated convergence theorem then allows us to identify the source term in the definition of the weak solutions. 
\end{proof}



\section{Incompressible limit \texorpdfstring{$\gamma \to +\infty$}{}}
\label{sec:inpressible-limit}
We now fix $\delta, \sigma>0 $ and study the incompressible limit $\gamma\to\infty$ for the RCH-DKS system. These two constants are the main link between RCH and DKS models. We recall that in the case $\delta=0$, the incompressible limit is given in~\cite{Perthame-incompressible-visco}. In the case $\sigma=0$, the regularity provided by the DCH model is not sufficient to pass to the limit. From now on, we keep $\delta, \sigma>0$ fixed and we may consider them equal to 1 in some computations. We summarize the main bounds proved in this section in the following proposition

\begin{proposition}
\label{prop:summary}
For all $T>0$, there exists a constant $C(T)$ independent of $\gamma$ such that the weak solutions built in Section~\ref{sec:existence} satisfy

\begin{align}
&\|\mu\|_{L^{2}(0,T,H^{1}(\Omega))}\le C, \quad \|n\|_{L^{2}(0,T,H^{1}(\Omega))}\le C,\label{ine31}\\
&\|n\|_{L^{\infty}(\Omega_{T})}\le C,\label{ine32}\\
&\begin{array}{ll}\|w\|_{L^{\infty}(0,T,H^{2}(\Omega))}\le C,\quad \|w^{\gamma}\|_{L^{\infty}(\Omega_{T})}\le C, \quad \|\nabla w^{\gamma}\|_{L^{\infty}(0,T,L^{1}(\Omega))}\le C,\\
\|\Delta w\|_{L^{\infty}(\Omega_{T})}\le C,\end{array}\label{ine33}\\
&\begin{array}{ll}\|\partial_{t}n\|_{L^{2}(0,T,H^{-  1}(\Omega))}\le C,\quad \|\partial_{t}w\|_{L^{2}(0,T,H^{1}(\Omega))}\le C,\quad  \|\partial_{t}\mu\|_{L^{2}(0,T,H^{-1}(\Omega))}\le C, \\
\|\partial_{t}w^{\gamma}\|_{L^{2}(0,T,H^{-1}(\Omega))}\le C.\end{array}\label{ine34}
\end{align}
The weak solutions in the case of no source term, i.e $G=0$, are unique.
\end{proposition}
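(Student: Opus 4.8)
The plan is to establish the bounds in the order \eqref{ine31}$\to$\eqref{ine32}$\to$\eqref{ine33}$\to$\eqref{ine34}, switching between the two equivalent formulations: the energy/entropy structure of the RCH system for the ``soft'' bounds, and the porous-medium form \eqref{eq:KS1} for the $L^\infty$ control. With $\delta,\sigma$ now fixed, the inequalities \eqref{ine111}--\eqref{ine711} upgrade at once to $\gamma$-uniform bounds, the prefactors $\sigma/\delta$ being harmless constants. Thus \eqref{ine111} gives $\mu\in L^2(0,T,H^1(\Omega))$ by combining $\nabla\mu\in L^2(\Omega_T)$ with the pointwise-in-time bound $\mu\in L^\infty(0,T,L^2(\Omega))$, while \eqref{ine311} gives the decisive $\gamma$-independent bound $\nabla w\in L^\infty(0,T,L^2(\Omega))$. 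Since $n=w+\tfrac\sigma\delta\mu$, this yields $\nabla n\in L^2(\Omega_T)$, and together with the bounded mass and Poincar\'e--Wirtinger it gives $n\in L^2(0,T,H^1(\Omega))$, completing \eqref{ine31}.

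The heart of the argument, and the main obstacle, is the $\gamma$-uniform bound \eqref{ine32}, $\|n\|_{L^\infty(\Omega_T)}\le C$. A naive pointwise maximum principle on \eqref{eq:KS1} does not close, because substituting $\Delta w$ from \eqref{eq:KS2} produces a Riccati-type self-interaction term $+\tfrac{\delta}{\sigma^2}n^2$. Instead I would run a Moser/De Giorgi iteration directly on the divergence form \eqref{eq:KS1}: testing against $n^{p-1}$ and integrating by parts yields the coercive porous-medium dissipation $-\tfrac{\delta(p-1)}{\sigma}\int n^{p-1}|\nabla n|^2$, the drift $+\tfrac{\delta(p-1)}{\sigma}\int n^{p-1}\nabla w\cdot\nabla n$, and the source $\int n^pG$, and crucially the stiff pressure $w^\gamma$ never enters this identity. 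The drift is split by Young's inequality, half absorbed by the dissipation and the remainder $\tfrac{\delta(p-1)}{2\sigma}\int n^{p-1}|\nabla w|^2$ controlled through the $\gamma$-independent bound on $\nabla w$, elliptic regularity for $w$, and Sobolev embedding. This gives an iterable recursion for $\|n\|_{L^p}$ whose constants can be tracked as $p\to\infty$, producing $\|n\|_{L^\infty(\Omega_T)}\le C(T)$ uniformly in $\gamma$, the $T$-dependence coming from $\|G\|_\infty$ via a Gr\"onwall step. Making the bootstrap between the $n$-iteration and the elliptic regularity of $w$ close uniformly in $\gamma$ is the delicate point.

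Once \eqref{ine32} is available, the remaining spatial bounds \eqref{ine33} follow from elliptic arguments on \eqref{eq:KS2}. The maximum principle for $-\sigma\Delta+\tfrac\sigma\delta(\cdot)^\gamma+(\cdot)$ gives, at an interior maximum of $w$, the inequality $\tfrac\sigma\delta w^\gamma+w\le n\le C$, whence $\|w\|_{L^\infty(\Omega_T)}\le C$ and $\|w^\gamma\|_{L^\infty(\Omega_T)}\le C$ uniformly in $\gamma$; writing $\Delta w=\tfrac1\sigma(\tfrac\sigma\delta w^\gamma+w-n)$ then bounds $\|\Delta w\|_{L^\infty(\Omega_T)}$, and elliptic regularity promotes this to $\|w\|_{L^\infty(0,T,H^2(\Omega))}\le C$. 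The bound $\|\nabla w^\gamma\|_{L^\infty(0,T,L^1(\Omega))}\le C$ is the most delicate of the three, since the identity $\nabla w^\gamma=\gamma w^{\gamma-1}\nabla w$ carries an explicit factor $\gamma$; it must be obtained by combining $\|w^\gamma\|_\infty\le C$ with the entropy control, exploiting that $\gamma w^{\gamma-1}$ concentrates on $\{w\approx1\}$.

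Finally, the time-derivative bounds \eqref{ine34} are routine given the spatial bounds. Testing \eqref{eq:CH1} against $\varphi\in H^1(\Omega)$ and using $\|n\|_{L^\infty(\Omega_T)}\le C$, $\nabla\mu\in L^2(\Omega_T)$ and \eqref{eq:assumption G} gives $\partial_t n\in L^2(0,T,H^{-1}(\Omega))$. Differentiating \eqref{eq:KS2} in time, $-\sigma\Delta\partial_t w+\tfrac{\sigma\gamma}\delta w^{\gamma-1}\partial_t w+\partial_t w=\partial_t n$, and testing against $\partial_t w$ (the term $w^{\gamma-1}(\partial_t w)^2\ge0$ only helps) gives $\|\partial_t w\|_{H^1}\le C\|\partial_t n\|_{H^{-1}}$, hence $\partial_t w\in L^2(0,T,H^1(\Omega))$; the relations $\mu=\tfrac\delta\sigma(n-w)$ and $w^\gamma=\mu+\delta\Delta w$ from \eqref{eq:defofw} then yield $\partial_t\mu,\partial_t w^\gamma\in L^2(0,T,H^{-1}(\Omega))$. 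For uniqueness when $G=0$, I would take two solutions with identical data, subtract, and test the difference of \eqref{eq:CH1} in the natural $H^{-1}$-type duality (mass is conserved since $G=0$); the monotonicity of $w\mapsto w^\gamma$ and the coercivity of the relaxation terms then produce a Gr\"onwall inequality for the appropriate norm of $n_1-n_2$. The degenerate mobility $n$ is the only real difficulty and is tamed by the $L^\infty$ bound \eqref{ine32}, while the source term must be excluded precisely because $nG(\mu)$ destroys the monotone structure needed to close the estimate.
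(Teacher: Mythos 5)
Your architecture (energy/entropy bounds, then an iteration for $\|n\|_{L^\infty}$, then elliptic arguments for $w$, $w^\gamma$, $\Delta w$, then time-differentiating \eqref{eq:KS2} for \eqref{ine34}) matches the paper, and your treatment of \eqref{ine31} and \eqref{ine34} is essentially the paper's. But at the crux of \eqref{ine32} your handling of the drift term has a genuine gap. After testing with $n^{p-1}$ you split $\int n^{p-1}\nabla w\cdot\nabla n$ by Young's inequality and are left with $\int n^{p-1}|\nabla w|^2$, which you propose to control by elliptic regularity and Sobolev embedding. Uniformly in $\gamma$, the only available control on $w$ at this stage is $\|w\|_{H^2}\lesssim \|n\|_{L^2}$ (test \eqref{eq:KS2} with $-\Delta w$; the $\gamma$-term has a good sign), hence $\nabla w\in L^{2d/(d-2)}$ in space. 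A Moser iteration with a drift requires $\nabla w\in L^q$ with $q>d$, and $2d/(d-2)>d$ fails for $d\ge 4$; moreover any gain in integrability of $\nabla w$ feeds back through $n$, which is exactly the bootstrap you flag as ``delicate'' and never close. The paper avoids this entirely by a structural trick you are missing: since $n^{p-1}\nabla n=\tfrac1p\nabla(n^p)$, one integrates by parts \emph{exactly} to get $\tfrac1p\int(-\Delta w)\,n^{p}$, and then substitutes from \eqref{eq:KS2}, where nonnegativity of $w$ and $w^\gamma$ gives $-\Delta w\le \tfrac{1}{\sigma}\,n$ pointwise; the drift thus becomes $\tfrac{1}{p\sigma}\int n^{p+1}$, the same term as the source, and is handled by the Gagliardo--Nirenberg inequality \eqref{eq:GN} with $\eps=C2^{-k}$ in the Alikakos recursion. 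No regularity of $\nabla w$ is needed at all, and crucially the stiff term $w^\gamma$ only \emph{helps} through its sign.

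Two further steps are asserted rather than proved, and your sketches would not deliver them. For $\|\nabla w^\gamma\|_{L^\infty(0,T,L^1(\Omega))}$, the entropy bound \eqref{ine611} only gives $\gamma\int_{\Omega_T}w^{\gamma-1}|\nabla w|^2\le C$, which is time-integrated, and the Cauchy--Schwarz route via $\nabla w^\gamma=\gamma w^{\gamma-1}\nabla w$ leaves an unabsorbed factor $\gamma$ (note $\|w^\gamma\|_\infty\le C$ gives $w^{\gamma-1}\le C$, not $\gamma w^{\gamma-1}\le C$); your ``concentration on $\{w\approx 1\}$'' heuristic does not quantify this. The paper's argument is a Kato-type sign argument: differentiate \eqref{eq:KS2} in space, multiply by $\sgn(\partial_x w^\gamma)=\sgn(\partial_x w)$, and use that the second-order term contributes a nonnegative quantity, yielding pointwise in time $\tfrac{\sigma}{\delta}\|\nabla w^\gamma(t)\|_{L^1}\le \|\nabla n(t)\|_{L^1}$ plus good-sign terms. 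For uniqueness, a plain Gr\"onwall inequality does not close: in the dual ($H^{-1}$-type) estimate the cross term $\int n\,\nabla w_2\cdot\nabla\varphi$ forces second derivatives of $w_2$, which are controlled only via Calder\'on--Zygmund from $\Delta w\in L^\infty$ as $\|D^2w_2\|_{L^p}\le Cp$ for every finite $p$. This yields the Osgood-type inequality $\eta'(t)\le Cp\max\bigl(\eta^{1-1/p},\eta\bigr)$ for $\eta=\|\nabla\varphi\|_{L^2}^2$, and uniqueness follows by the Yudovich device of letting $p\to\infty$ on short time intervals and iterating --- a log-Lipschitz mechanism your monotonicity-plus-Gr\"onwall sketch does not capture (monotonicity of $x\mapsto x^\gamma$ is used, but only to discard the good-sign terms $I_1$ and to compare $\nabla w$ with $\nabla\varphi$).
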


\subsection{Uniform a priori estimates in Proposition~\ref{prop:summary}}
\label{sec:apriori-gamma}

\begin{proof}[Proof of Proposition~\ref{prop:summary}]
We start with the first two estimates~\eqref{ine31} of Proposition~\ref{prop:summary}

The first inequality is a consequence of Inequalities~\eqref{ine111}. The second inequality has been proven in the previous section. 

\textit{$L^{\infty}$ bound for n.} We now establish Inequality~\eqref{ine32} under the assumption~\eqref{initcond} on the initial condition. The proof requires a variant of Gagliardo-Nirenberg inequality, namely there exists $C>0$ such that for every $0<\eps< 1/2$ and every $v\in H^{1}(\Omega)$,
\begin{equation}
\|v\|_{L^{2}(\Omega)}^{2}\le \eps \|\nabla v\|^{2}_{L^{2}(\Omega)}+\frac{C}{\eps^{d/2}}\|v\|_{L^{1}(\Omega)}^{2}\label{eq:GN}.
\end{equation}

This inequality is an application of the classical Gagliardo-Nirenberg and Young inequalities. We refer the reader to \cite{Cholewa}, Equation~(9.3.8).

First, to begin the proof, we wish to stress a few comments. The proof of this proposition relies on the use of the Alikakos iteration method~\cite{Alikakos}. We prove it for smooth solutions of the equation but the method can be applied for weak solutions, since the bounds only depend on the a priori estimates already found.

We start by choosing $\sigma=\delta=1$ to simplify the notation. We notice however that the $L^{\infty}$ bound is not uniform in $\sigma$ and therefore this result does not apply to Subsection~\ref{section:sig}. In fact, with the same proof, it is possible to show that the $L^{\infty}$ bound varies as $1/\sigma$. We multiply Equation~\eqref{eq:KS1} by $n^{2^{k}-1}$, integrate over the domain, and after integration by parts that uses the boundary conditions~\eqref{boundary}, we obtain 
\begin{equation}
\frac{1}{2^{k}}\frac{d}{dt}\int_{\Omega}n^{2^{k}}+(2^{k}-1)\int_{\Omega}n^{2^{k}-1}|\nabla n|^{2}=(2^{k}-1)\int_{\Omega}n^{2^{k}-1}\nabla w\cdot\nabla n+\int_{\Omega}n^{2^{k}}G(\mu).
\label{eq:alikakos-1}
\end{equation}

Then, multiplying Equation~\eqref{eq:KS2} by $n^{2^k}$, integrating over $\Omega$, and after integration by parts, we have from the non-negativity of both $n$ and $w$ (see Proposition~\ref{nonnegativities})
\begin{equation}
   \int_\Omega n^{2^k-1}\nabla w \cdot \nabla n=\frac{1}{2^{k}}\int_{\Omega}-\Delta w n^{2^{k}} \le \f1{2^{k}} \int_\Omega n^{2^k+1}.
   \label{eq:Alikakos-2}
\end{equation}
Therefore, rearranging the second term on the left-hand side of Equation~\eqref{eq:alikakos-1}, passing it to the right-hand side, using Inequality~\eqref{eq:Alikakos-2}, and, finally, using $n^{2^{k}}\le n^{2^{k}+1}+1$ for the second term of the right hand side, we obtain
\begin{equation*}
\frac{1}{2^{k}}\frac{d}{dt}\int_{\Omega}n^{2^{k}}\le -\frac{4(2^{k}-1)}{(2^{k}+1)^{2}}\int_{\Omega}|\nabla n^{\frac{2^{k}+1}{2}}|^{2}+\Big(\frac{2^{k}-1}{2^{k}}+C\Big)\int_{\Omega}n^{2^{k}+1}+C.
\end{equation*}
This means exactly that for some $C>0$,
\begin{equation*}
\frac{d}{dt}\|n\|_{L^{2^{k}}(\Omega)}^{2^{k}}\le -\frac{4(2^{k}-1)2^{k}}{(2^{k}+1)^{2}}\|\nabla n^{\frac{2^{k}+1}{2}}\|_{L^{2}(\Omega)}^{2}+C2^{k}\|n\|_{L^{2^{k}+1}}^{2^{k}+1}+C2^{k}.
\end{equation*}
Applying Equation~\eqref{eq:GN} with $v=n^{\frac{2^{k}+1}{2}}$, we obtain for any $\eps>0$

\begin{equation*}
\frac{d}{dt}\|n\|_{L^{2^{k}}(\Omega)}^{2^{k}}\le \Big(C2^{k}-\frac{4(2^{k}-1)2^{k}}{\eps(2^{k}+1)^{2}}\Big)\|n\|_{L^{2^{k}+1}}^{2^{k}+1}+\frac{C}{\eps^{(d+2)/2}}\frac{4(2^{k}-1)2^{k}}{(2^{k}+1)^{2}}\|n^{\frac{2^{k}+1}{2}}\|_{L^{1}(\Omega)}^{2}+C2^{k}.
\end{equation*}
Choosing $\eps=C2^{-k}$ in order to let the first term of the right-hand side to be non-positive, leads to 
\begin{equation*}
\frac{d}{dt}\|n\|_{L^{2^{k}}(\Omega)}^{2^{k}}\le C2^{(d+2)k/2}\|n^{\frac{2^{k}+1}{2}}\|_{L^{1}(\Omega)}^{2}+C2^{k}.
\end{equation*}
Moreover, using Riesz-Thorin interpolation theorem, we have
\begin{equation*}
\frac{d}{dt}\|n\|_{L^{2^{k}}(\Omega)}^{2^{k}}\le C2^{(d+2)k/2}\|n\|_{L^{2^{k}}(\Omega)}^{2}\|n\|_{L^{2^{k-1}}(\Omega)}^{2^{k}-1}+C2^{k}.
\end{equation*}
Denoting $m_{k}=\sup_{t\in(0,T)}\|n(t)\|_{L^{2^{k}}(\Omega)}$ and after integrating in time, we obtain
\begin{equation*}
m_{k}\le \Big(C2^{\frac{(d+2)k}{2}}m_{k-1}^{2^{k}-1}m_{k}^{2}+C2^{k}\Big)^{1/2^{k}}.
\end{equation*}
Following~\cite{Cholewa} (Lemma~9.3.1, p.213) the sequence $m_{k}$ can be dominated by $m_{k}'$  which satisfies
\begin{equation*}
    m_{k}'=(C2^{\frac{(d+2)k}{2}})^{1/2^{k}}m_{k-1}'^{1-1/2^{k}}m_{k}'^{1/2^{k-1}}\quad\text{with $C$ large enough and $m_{0}'\ge 1.$},
\end{equation*}
i.e,
\begin{equation*}
    m_{k}'=(C2^{\frac{(d+2)k}{2}})^{\frac{1}{2^{k}-2}}m_{k-1}'^{\frac{2^{k}-1}{2^{k}-2 }}.
\end{equation*}
Letting $k\to\infty$ and by induction we find $\|n\|_{L^{\infty}(\Omega_{T})}=m_{\infty}\le m_{\infty}'   \le C$. We refer to~\cite{Cholewa} for the details. This yields the result. This in turn provides also higher regularity for $w$ thanks to Equation~\eqref{eq:KS2} and one can find estimates~\eqref{ine33}.

\textit{Proof of~\eqref{ine33}.} The first estimate is a consequence of Equation~\eqref{eq:KS2} together with elliptic regularity.\par
For the second estimate we multiply Equation~\eqref{eq:KS2} by $w^{\gamma(r-1)}$. After an integration by parts and using the nonnegativity of $n,w$ we find for every $t$,
\begin{equation*}
\|w^{\gamma}(t)\|_{L^{r}(\Omega)}^{r}\le\|n(t) w(t)^{\gamma(r-1)}\|_{L^{r}(\Omega)}.
\end{equation*}
With Hölder inequality, we obtain 
\begin{equation*}
\|w^{\gamma}(t)\|_{L^{r}(\Omega)}\le\|n(t)\|_{L^{r}(\Omega)}.  
\end{equation*}
Letting $r\to\infty$ thanks to Inequality~\eqref{ine32} and taking the supremum in time yields the result. We refer the reader to Theorem 2.14 in \cite{adams2003sobolev} for further details.
The third inequality is just the result of the two previous inequalities as well as Equation~\eqref{eq:KS2}. To get the fourth inequality we differentiate in space Equation~\eqref{eq:KS2} and multiply by $\sgn(\partial_{x}w^{\gamma})$. Since $w$ is nonnegative, $\sgn(\partial_{x}w^{\gamma})= \sgn(\partial_{x}w)$. Using integration by parts on the first term yields the result. The computations can be made rigorous with the derivative of a convex approximation of the absolute value.

\begin{remark}
With Inequality~\eqref{ine32} and Equation~\eqref{eq:KS2} it is possible to find that $w\in L^{2}(0,T,H^3(\Omega))$ and therefore $w^{\gamma}\in L^{2}(0,T,H^{1}(\Omega))$ thanks to Equation~\eqref{eq:KS2}. However, the bound is not uniform in $\gamma$ and we cannot gain compactness. 
\end{remark}

\begin{remark}
When $G=0$, Inequalities~\eqref{ine33} provides uniqueness of the weak solutions, see Appendix~\ref{app:uniqueness}.
\end{remark}

\textit{Time compactness of $n$.} Compactness in time for $n$ follows using once again the Lions-Aubin lemma with the $(L^{4}(0,T,W^{1,s}(\Omega)))'$ bound on $\partial_{t}n$ from proposition~\ref{compactnessh2}. One can prove compactness in time for $w$ using the Fréchet-Kolmogorov theorem. However, with the previous regularity results, we can get the better bounds~\eqref{ine34} on the time derivative of $n$, $w$, $p=w^{\gamma}$ and $\mu$. 

\textit{Proof of~\eqref{ine34}.} Since we have found that $n\in L^{\infty}(\Omega_{T})$, we have for any test function $\phi \in L^{2}(0,T;H^1(\Omega))$, 
\[
    \abs{\int_{\Omega_T} \p_t n \phi  } \le \norm{n}_{L^\infty(\Omega_T)}^{1/2} \int_{\Omega_T} \abs{ n^{1/2} \nabla \mu \nabla \phi}\le C \norm{\nabla \phi}_{L^2(\Omega_T)},
\]
where we have used~\eqref{ine32} and~\eqref{ine511}. Hence, we have ${\partial_{t}n\in L^{2}(0,T,H^{-1}(\Omega))}$ uniformly. To find compactness of $\partial_{t}\mu$ it is enough to find compactness for $\partial_{t}w$ thanks to Equation~\eqref{eq:defofw}. 
Computing the time derivative of Equation~\eqref{eq:KS2}, multiplying by a test function $\phi \in L^{2}(0,T;H^1(\Omega))$, using the notation $z=\p_t w$, and integrating in space and time, we have 
\[
\int_{\Omega_T} \sigma \nabla z \cdot \nabla \phi + \f{\sigma}{\delta}\gamma w^{\gamma-1}z \phi +z\phi= \int_{\Omega_T} \partial_{t}n \phi.	
\]
Choosing $\phi = z$, and from Young's Inequality, we have
\[
    \int_{\Omega_T} \left(\sigma-\f1{2\kappa}\right) \abs{\nabla z}^2  + \left(\f{\sigma}{\delta}\gamma w^{\gamma-1}+1-\f1{2\kappa}\right)\abs{z}^2 \le 2 \kappa  \norm{\partial_{t}n}^2_{L^2\left(0,T;H^{-1}(\Omega)\right)}.	
\]
Therefore, choosing $\kappa$ large enough, we obtain that $\p_t w \in L^{2}(0,T,H^{1}(\Omega))$. Using Equation~\eqref{eq:KS2} provides the compactness for $\partial_{t}w^{\gamma}$. This achieves the proof of~\eqref{ine34} and Proposition~\ref{prop:summary}.
\end{proof}

Finally, one can show that the regularity of the solutions provides continuity with respect to time

\begin{proposition}
Assume~\eqref{initcond}--\eqref{eq:assumption G}, the functions $n$, $w$ and $w^{\gamma}$ belong to $C(0,T,L^{p}(\Omega))$ for every $1\le p<~\infty$.
\end{proposition}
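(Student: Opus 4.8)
The plan is to first establish continuity with values in $L^2(\Omega)$ for each of the three functions, and then to upgrade to $L^p(\Omega)$ for every finite $p$ by interpolating against the uniform $L^\infty$ bounds already recorded in Proposition~\ref{prop:summary}.

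First, for $n$, I would invoke the classical embedding for the Gelfand triple $H^1(\Omega)\hookrightarrow L^2(\Omega)\hookrightarrow H^{-1}(\Omega)$: since $n\in L^2(0,T,H^1(\Omega))$ by~\eqref{ine31} and $\partial_t n\in L^2(0,T,H^{-1}(\Omega))$ by~\eqref{ine34}, one gets (after modification on a null set of times) $n\in C([0,T],L^2(\Omega))$. For $w$ the argument is even more direct: $w\in L^\infty(0,T,H^2(\Omega))\subset L^2(0,T,L^2(\Omega))$ by~\eqref{ine33} and $\partial_t w\in L^2(0,T,H^1(\Omega))\subset L^2(0,T,L^2(\Omega))$ by~\eqref{ine34}, so $w\in H^1(0,T;L^2(\Omega))$, which embeds continuously into $C([0,T];L^2(\Omega))$ via the one-dimensional Sobolev embedding in the time variable. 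Hence $n,w\in C([0,T],L^2(\Omega))$.

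Next, I would record the uniform $L^\infty$ bounds and interpolate. For $n$, the bound is precisely~\eqref{ine32}. For $w$, I would use $w\ge 0$ (Proposition~\ref{nonnegativities}) together with $\|w^\gamma\|_{L^\infty(\Omega_T)}\le C$ from~\eqref{ine33}, giving $\|w\|_{L^\infty(\Omega_T)}\le C^{1/\gamma}=:M$. Then, for fixed $2\le p<\infty$ and $f\in\{n,w\}$, the elementary interpolation inequality
\[
\|f(t)-f(s)\|_{L^p(\Omega)}\le \|f(t)-f(s)\|_{L^\infty(\Omega)}^{1-2/p}\,\|f(t)-f(s)\|_{L^2(\Omega)}^{2/p}
\]
combined with the uniform $L^\infty$ control and the $L^2$-continuity above shows $\|f(t)-f(s)\|_{L^p(\Omega)}\to 0$ as $s\to t$, i.e. $f\in C([0,T],L^p(\Omega))$. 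For $1\le p<2$, the boundedness of $\Omega$ gives $\|g\|_{L^p(\Omega)}\le |\Omega|^{1/p-1/2}\|g\|_{L^2(\Omega)}$, so $C([0,T],L^2(\Omega))\hookrightarrow C([0,T],L^p(\Omega))$ and the claim for $n$ and $w$ holds for every $1\le p<\infty$.

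Finally, for $w^\gamma$ I would avoid the time-derivative route and exploit the continuity of $w$ just proved: since $w$ takes values in $[0,M]$ and $s\mapsto s^\gamma$ is Lipschitz on $[0,M]$ with constant $\gamma M^{\gamma-1}$, for each $1\le p<\infty$ one has the pointwise-in-time estimate
\[
\|w(t)^\gamma-w(s)^\gamma\|_{L^p(\Omega)}\le \gamma M^{\gamma-1}\,\|w(t)-w(s)\|_{L^p(\Omega)},
\]
whose right-hand side tends to $0$ as $s\to t$ because $w\in C([0,T],L^p(\Omega))$; hence $w^\gamma\in C([0,T],L^p(\Omega))$. Alternatively, one may run the Gelfand-triple argument directly on $w^\gamma$, using $w^\gamma\in L^2(0,T,H^1(\Omega))$ for fixed $\gamma$ (the Remark following~\eqref{ine34}) and $\partial_t w^\gamma\in L^2(0,T,H^{-1}(\Omega))$ from~\eqref{ine34}, then interpolating with $\|w^\gamma\|_{L^\infty(\Omega_T)}\le C$. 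There is no genuine obstacle here; the only point requiring minor care is the passage from $L^2$- to $L^p$-continuity, handled by the interpolation inequality above together with the uniform $L^\infty$ control. Note that all constants (in particular the Lipschitz constant $\gamma M^{\gamma-1}$) may depend on $\gamma$, which is harmless since $\gamma$ is fixed throughout this section.
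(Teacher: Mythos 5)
Your proposal is correct and is essentially the intended argument: the paper states this proposition without any written proof, merely asserting that continuity in time follows from the regularity in Proposition~\ref{prop:summary}, and your write-up (Lions--Magenes/Gelfand-triple continuity in $L^{2}(\Omega)$ for $n$ from \eqref{ine31} and \eqref{ine34}, the $H^{1}(0,T;L^{2}(\Omega))\hookrightarrow C([0,T];L^{2}(\Omega))$ embedding for $w$, interpolation against the uniform $L^{\infty}$ bounds \eqref{ine32}--\eqref{ine33} to reach all $p<\infty$, and the Lipschitz composition on $[0,M]$ for $w^{\gamma}$) is precisely the standard way to fill in that assertion. Every ingredient you invoke is available in the paper (in particular $w\ge 0$ from Proposition~\ref{nonnegativities} and $\|w^{\gamma}\|_{L^{\infty}(\Omega_{T})}\le C$ from \eqref{ine33}, so $\|w\|_{L^{\infty}(\Omega_{T})}\le C^{1/\gamma}$), and your remark that the constants may depend on the fixed $\gamma$ is exactly the right caveat.
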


\subsection{Convergence \texorpdfstring{$\gamma \to +\infty$}{}}

With all the ingredients of the previous subsections we find
\begin{thm}[Incompressible limit] \label{th:convergence-gamma}
Assume~\eqref{initcond}--\eqref{eq:assumption G} and let $(n_{\sigma,\gamma},\mu_{\sigma,\gamma})$ be a weak solution to the RDCH model~\eqref{eq:CH1}--\eqref{eq:CH2}. Then, when $\gamma\to\infty$, after extraction of subsequences, $(n_{\sigma,\gamma},\mu_{\sigma,\gamma},w_{\sigma,\gamma})\to (n_{\sigma,\infty},\mu_{\sigma,\infty},w_{\sigma,\infty})$ with the regularity $n_{\sigma,\infty}\in L^{2}(0,T,H^{1}(\Omega))\cap L^{\infty}(\Omega_{T})$, $\partial_{t}n_{\sigma,\infty}\in L^{2}(0,T,H^{-1}(\Omega))$,  $w_{\sigma,\infty}\in C(0,T,L^{p}(\Omega))\cap L^{\infty}(0,T,W^{2,p}(\Omega))$ and $\mu_{\sigma,\infty}\in C(0,T,L^{p}(\Omega))\cap L^{2}(0,T,H^{1}(\Omega))$ for every $1\le p<\infty$. These functions  satisfy in the weak sense
\begin{equation*}
\partial_{t}n_{\sigma,\infty}-\Div(n_{\sigma,\infty}\nabla\mu_{\sigma,\infty})=n_{\sigma,\infty}G(\mu_{\infty}).
\end{equation*}
and
\begin{equation*}
\mu_{\sigma,\infty}=p_{\sigma,\infty}-\delta\Delta w_{\sigma,\infty},\quad -\sigma\Delta w_{\sigma,\infty}+\frac{\sigma}{\delta}p_{\sigma,\infty}+w_{\sigma,\infty}=n_{\sigma,\infty},\quad w_{\sigma,\infty}=n_{\sigma,\infty}-\frac{\sigma}{\delta}\mu_{\sigma,\infty}.
\end{equation*}
where $p_{\sigma,\infty}$ is the strong~$L^{p}(\Omega_{T})$-limit of $w^{\gamma}$ and belongs to $L^{\infty}(0,T,L^{\infty}(\Omega))\cap  C(0,T,L^{p}(\Omega))$. \commentout{Moreover $\nabla p\in L^{\infty}(0,T,\mathcal{M}(\Omega))$ where $\mathcal{M}(\Omega)$ denotes the space of Radon measures.}
\end{thm}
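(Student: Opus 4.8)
The plan is to obtain the limit by compactness, leaning on the uniform-in-$\gamma$ bounds of Proposition~\ref{prop:summary}, and then to pass to the limit termwise in the weak formulation. First I would fix $\sigma,\delta>0$ and extract a subsequence along which all quantities converge. From $\|n\|_{L^{2}(0,T,H^{1})}+\|\partial_{t}n\|_{L^{2}(0,T,H^{-1})}\le C$ and the Lions--Aubin lemma, $n_{\sigma,\gamma}\to n_{\sigma,\infty}$ strongly in $L^{2}(\Omega_{T})$; together with $\|n\|_{L^{\infty}(\Omega_{T})}\le C$ this upgrades to strong convergence in every $L^{p}(\Omega_{T})$, $p<\infty$, and a.e. The same argument applied to $\mu$ (using $\|\mu\|_{L^{2}(0,T,H^{1})}+\|\partial_{t}\mu\|_{L^{2}(0,T,H^{-1})}\le C$) gives $\mu_{\sigma,\gamma}\to\mu_{\sigma,\infty}$ strongly in $L^{2}(\Omega_{T})$ and a.e., while $\nabla\mu_{\sigma,\gamma}\rightharpoonup\nabla\mu_{\sigma,\infty}$ weakly in $L^{2}(\Omega_{T})$. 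For $w$, the bounds $\|w\|_{L^{\infty}(0,T,H^{2})}+\|\partial_{t}w\|_{L^{2}(0,T,H^{1})}\le C$ give strong convergence of $w_{\sigma,\gamma}$ and $\nabla w_{\sigma,\gamma}$ in $L^{2}(\Omega_{T})$, and $\Delta w_{\sigma,\gamma}\rightharpoonup\Delta w_{\sigma,\infty}$ weakly-$*$ from $\|\Delta w\|_{L^{\infty}(\Omega_{T})}\le C$.

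The key and most delicate step is the identification of the pressure $p=w^{\gamma}$. Its weak-$*$ limit $p_{\sigma,\infty}$ exists in $L^{\infty}(\Omega_{T})$, and the bound $\|w^{\gamma}\|_{L^{\infty}(\Omega_{T})}\le C$ forces $w_{\sigma,\infty}\le 1$ a.e.; but the a.e. convergence of $w_{\sigma,\gamma}$ pins down $\lim w^{\gamma}$ only on $\{w_{\sigma,\infty}<1\}$ (where it yields $p_{\sigma,\infty}=0$) and not on the saturated region $\{w_{\sigma,\infty}=1\}$, so $w$ alone is insufficient. I would therefore prove strong compactness of $p$ directly: the bound $\|\nabla w^{\gamma}\|_{L^{\infty}(0,T,L^{1})}\le C$ controls spatial translations in $L^{1}$ by $|h|\,\|\nabla p\|_{L^{1}}$, and $\|\partial_{t}w^{\gamma}\|_{L^{2}(0,T,H^{-1})}\le C$ controls translations in time; a Fréchet--Kolmogorov argument as in Appendix~\ref{app:FK} then yields strong convergence of $p$ in $L^{1}(\Omega_{T})$, and interpolation with the $L^{\infty}(\Omega_{T})$ bound upgrades this to strong convergence in every $L^{p}(\Omega_{T})$. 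This is precisely the claimed strong $L^{p}$-limit and also gives $p_{\sigma,\infty}\in L^{\infty}(0,T,L^{\infty}(\Omega))\cap C(0,T,L^{p}(\Omega))$.

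With these convergences in hand, passing to the limit in the linear relations is routine: the elliptic identity $-\sigma\Delta w_{\sigma,\gamma}+\frac{\sigma}{\delta}w_{\sigma,\gamma}^{\gamma}+w_{\sigma,\gamma}=n_{\sigma,\gamma}$, the definition $\mu=w^{\gamma}-\delta\Delta w$, and the relation $w=n-\frac{\sigma}{\delta}\mu$ all pass to the limit in the distributional sense, yielding the three stated relations for $(n_{\sigma,\infty},w_{\sigma,\infty},\mu_{\sigma,\infty},p_{\sigma,\infty})$. For the transport equation, written weakly as $\int_{0}^{T}\langle\partial_{t}n,\chi\rangle+\int_{\Omega_{T}}n\nabla\mu\cdot\nabla\chi=\int_{\Omega_{T}}nG(\mu)\chi$, the convective term passes to the limit as a strong-times-weak product, since $n_{\sigma,\gamma}\to n_{\sigma,\infty}$ strongly in $L^{2}(\Omega_{T})$ while $\nabla\mu_{\sigma,\gamma}\rightharpoonup\nabla\mu_{\sigma,\infty}$ weakly in $L^{2}(\Omega_{T})$. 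The source term is handled using the a.e. convergence $\mu_{\sigma,\gamma}\to\mu_{\sigma,\infty}$, the continuity and boundedness of $G$ from~\eqref{eq:assumption G}, and dominated convergence; in contrast to the $\sigma\to0$ limit, the uniform $H^{1}$-bound on $\mu$ makes this identification immediate here.

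Finally, the regularity of the limit follows from weak/weak-$*$ lower semicontinuity of the norms, from elliptic regularity ($\Delta w_{\sigma,\infty}\in L^{\infty}(\Omega_{T})$ with $w_{\sigma,\infty}\in L^{\infty}$ gives $w_{\sigma,\infty}\in L^{\infty}(0,T,W^{2,p}(\Omega))$), and from an Aubin--Lions--Simon argument for the continuity in time $C(0,T,L^{p}(\Omega))$. The main obstacle, as indicated, is the strong convergence and identification of $p=w^{\gamma}$: because the spatial control is only an $L^{1}$-bound on the gradient, one cannot invoke a standard $H^{1}$-compactness argument and must instead rely on the Fréchet--Kolmogorov machinery; the complementarity relation $p_{\sigma,\infty}(w_{\sigma,\infty}-1)=0$ then follows a posteriori from the a.e. behavior of $w_{\sigma,\gamma}$.
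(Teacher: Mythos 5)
Your proposal is correct and follows essentially the same route as the paper: the same uniform bounds from Proposition~\ref{prop:summary} feed Lions--Aubin compactness for $n_{\sigma,\gamma}$ and $\mu_{\sigma,\gamma}$, the convective term passes as a strong-times-weak product, the source term is identified via a.e.\ convergence of $\mu_{\sigma,\gamma}$ and dominated convergence, and the crucial strong $L^{p}$-convergence of $p_{\sigma,\gamma}=w_{\sigma,\gamma}^{\gamma}$ is obtained exactly as in the paper by Fr\'echet--Kolmogorov, using the $L^{\infty}(0,T,L^{1}(\Omega))$ bound on $\nabla w^{\gamma}$ for space translations and the $L^{2}(0,T,H^{-1}(\Omega))$ bound on $\partial_{t}w^{\gamma}$ for time translations, then interpolating with the $L^{\infty}$ bound. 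Your diagnosis that a.e.\ convergence of $w_{\sigma,\gamma}$ alone cannot identify the pressure on the saturated set $\{w_{\sigma,\infty}=1\}$ correctly pinpoints why this compactness step is the heart of the argument (note only that the complementarity relation $p_{\sigma,\infty}(w_{\sigma,\infty}-1)=0$ is proved in the paper by a separate Lions--Masmoudi-type inequality, not merely from a.e.\ behavior, but that relation is not part of this theorem's statement).
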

\begin{proof}
For the first term on the left hand side, the weak convergence of $\partial_{t}n_{\sigma,\gamma}$ given by~\eqref{ine34} is enough. For the second term of the left hand side, we use~\eqref{ine31}--\eqref{ine32} to prove the weak convergence of $n_{\sigma,\gamma}\nabla\mu_{\sigma,\gamma}$. Then, to identify its limit, we use the strong convergence of $n_{\sigma,\gamma}$ from~\eqref{ine31}-\eqref{ine34} and the weak convergence of $\nabla\mu_{\sigma,\gamma}$ given by~\eqref{ine31}. For the term on the right hand side, we use the strong convergence of $n_{\sigma,\gamma}$ and $\mu_{\sigma,\gamma}$. Finally for the equation on $\mu_{\sigma,\gamma}$ and $w_{\sigma,\gamma}$ weak convergence is enough. To prove strong convergence of $p_{\sigma,\gamma}=w^{\gamma}_{\sigma,\gamma}$ we use the bounds on $\partial_{t}p_{\sigma,\gamma}, \nabla p_{\sigma,\gamma}$ from~\eqref{ine33}--\eqref{ine34}. An application of the Fréchet-Kolmogorov theorem as in Appendix~\ref{app:FK} yields the strong convergence. The continuity with respect to time follows from the regularity of the solutions. 
\end{proof}

Moreover, we find two propositions on $w_{\sigma,\infty}$, the first one provides an $L^{\infty}$-bound on $w_{\sigma,\infty}$, and the second one gives some information about the behaviour of the potential in the zones where $w_{\sigma,\infty}\ne 1$.

\begin{proposition}[$L^\infty$-bound for $w_{\sigma,\infty}$]
\label{winf<1}
For the limit solution $w_{\sigma,\infty}$ defined in Theorem~\ref{th:convergence-gamma}, we have, with $w_{\sigma,\gamma}=n_{\sigma,\gamma}-\frac{\sigma}{\delta}\mu_{\sigma,\gamma}$,
\begin{equation*}
\|w_{\sigma,\infty}\|_{L^{\infty}(\Omega_{T})}\le 1,	
\end{equation*}

\end{proposition}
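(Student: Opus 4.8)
The plan is to exploit the $\gamma$-uniform bound $\|w^{\gamma}\|_{L^{\infty}(\Omega_{T})}\le C$ from~\eqref{ine33} (here $w=w_{\sigma,\gamma}$ and $C$ is independent of $\gamma$) and simply extract the $\gamma$-th root. Since $w_{\sigma,\gamma}\ge 0$ by Proposition~\ref{nonnegativities}, the inequality $w_{\sigma,\gamma}^{\gamma}\le C$ a.e.\ rewrites as
\[
0\le w_{\sigma,\gamma}\le C^{1/\gamma}\qquad\text{a.e. in }\Omega_{T},
\]
so that $\|w_{\sigma,\gamma}\|_{L^{\infty}(\Omega_{T})}\le C^{1/\gamma}$. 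As $\gamma\to\infty$ the right-hand side tends to $1$ for any fixed $C>0$, which already identifies the correct constant; it then remains only to transfer this bound to the limit.

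Concretely, first I would record the nonnegativity and the uniform control of $w_{\sigma,\gamma}^{\gamma}$ to obtain the pointwise inequality above. Then I would pass to the limit using the convergence furnished by Theorem~\ref{th:convergence-gamma}: the strong $L^{p}(\Omega_{T})$ convergence $w_{\sigma,\gamma}\to w_{\sigma,\infty}$ yields, along a subsequence, convergence a.e.\ in $\Omega_{T}$. Along that subsequence $0\le w_{\sigma,\gamma}(t,x)\le C^{1/\gamma}$ for a.e.\ $(t,x)$, and letting $\gamma\to\infty$ gives $0\le w_{\sigma,\infty}(t,x)\le 1$ a.e., i.e.\ $\|w_{\sigma,\infty}\|_{L^{\infty}(\Omega_{T})}\le 1$. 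An equivalent and slightly cleaner route is to note that $\{w_{\sigma,\gamma}\}$ is bounded in $L^{\infty}(\Omega_{T})$ by $C^{1/\gamma}$, hence admits a weak-$*$ limit that must coincide with $w_{\sigma,\infty}$, and then invoke weak-$*$ lower semicontinuity of the $L^{\infty}$ norm to get $\|w_{\sigma,\infty}\|_{L^{\infty}}\le\liminf_{\gamma}C^{1/\gamma}=1$. Either argument avoids having to control $\mu_{\sigma,\gamma}$ directly through the relation $w_{\sigma,\gamma}=n_{\sigma,\gamma}-\frac{\sigma}{\delta}\mu_{\sigma,\gamma}$.

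I do not expect a genuine obstacle: the statement is elementary once~\eqref{ine33} is in hand, and the only point needing minor care is the mode of convergence—since strong $L^{p}$ convergence delivers a.e.\ convergence only along a subsequence, the pointwise bound must be passed to the limit along the \emph{same} subsequence, with uniqueness of the limit $w_{\sigma,\infty}$ removing any ambiguity. The substantive work, namely the derivation of the $\gamma$-uniform estimate $\|w^{\gamma}\|_{L^{\infty}}\le C$ itself (via the Alikakos iteration and the comparison $\|w^{\gamma}(t)\|_{L^{r}}\le\|n(t)\|_{L^{r}}$ followed by $r\to\infty$), has already been carried out in Proposition~\ref{prop:summary}, so nothing remains beyond this passage to the limit.
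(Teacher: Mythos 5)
Your proposal is correct, and in fact the paper itself opens its proof by observing that for $\sigma>0$ the result ``is trivial with the $L^{\infty}$ bound on $w^{\gamma}$ from Proposition~\ref{prop:summary}'' --- your argument is precisely that trivial route, spelled out carefully (nonnegativity, the pointwise root extraction $w_{\sigma,\gamma}\le C^{1/\gamma}$, and passage to the limit along the a.e.-convergent subsequence, or alternatively weak-$*$ lower semicontinuity). The paper, however, deliberately writes out a \emph{different} proof: it starts from the energy bound~\eqref{ine711}, which gives $\|w_{\sigma,\gamma}(t)\|_{L^{\gamma+1}(\Omega)}\le (C(\gamma+1))^{1/(\gamma+1)}$, interpolates between $L^{1}$ and $L^{\gamma+1}$ to control $\|w_{\sigma,\gamma}(t)\|_{L^{q}}$, uses weak convergence and lower semicontinuity of the norm as $\gamma\to\infty$, and finally lets $q\to\infty$. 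The point of that detour is uniformity in $\sigma$: the estimate~\eqref{ine33} you rely on descends from the Alikakos iteration, whose constant blows up like $1/\sigma$ (as the paper notes in Section~\ref{sec:apriori-gamma}), so your argument is confined to fixed $\sigma>0$, whereas the energy bound~\eqref{ine711} is uniform in both $\gamma$ and $\sigma$ and the paper's proof therefore extends verbatim to the case $\sigma=0$, i.e.\ $w_{0,\gamma}=n_{0,\gamma}$. In short: your route is the more elementary one and fully proves the stated proposition; the paper's route trades that simplicity for robustness in the vanishing-relaxation regime.
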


\begin{proof}
In the case $\sigma>0$, this estimate is trivial with the $L^{\infty}$ bound on $w^{\gamma}$ from Proposition~\ref{prop:summary}. However, we also provide a proof that works in the case $\sigma=0$, i.e., when $w_{0,\gamma}=n_{0,\gamma})$. 

We start by using Inequality~\eqref{ine711}, we have 
\begin{equation*}
\|w_{\sigma,\gamma}(t)\|_{L^{\gamma+1}(\Omega)}\le (C(\gamma+1))^{1/(\gamma+1)}\le C^{1/(\gamma+1)}.	
\end{equation*}
By interpolation, and with $\frac{1}{q}=\theta+\frac{1-\theta}{\gamma+1}$ where $q\in(1,\gamma+1)$, we have
\begin{equation*}
\|w_{\sigma,\gamma}(t)\|_{L^{q}(\Omega)}\le \|w_{\sigma,\gamma}(t)\|_{L^{1}(\Omega)}^{\theta}\|w_{\sigma,\gamma}(t)\|_{L^{\gamma+1}}^{1-\theta}.	
\end{equation*}
From the Cauchy-Schwarz inequality for the $L^{1}$ norm and from the previous inequality for the $L^{\gamma+1}$-norm, we easily find two constants $C,\widetilde{C}$ such that
\begin{equation*}
\|w_{\sigma,\gamma}(t)\|_{L^{q}(\Omega)}\le \widetilde{C}^{\theta}C^{(1-\theta)/(\gamma+1)}.	
\end{equation*}
Since we know that $w_{\gamma}\rightharpoonup w_{\infty}$, and by lower semi-continuity of the norm as well as the fact that $\theta\to 1/q$ when $\gamma\to\infty$,  we have
 \begin{equation*}
 \|w_{\sigma,\infty}\|_{L^{\infty}(0,T,L^{q}(\Omega))}\le\liminf_{\gamma\to\infty} \|w_{\sigma,\gamma}\|_{L^{\infty}(0,T,L^{q}(\Omega))}\le \widetilde{C}^{1/q},
 \end{equation*}
 for any $q\in(1,\infty)$.
 Therefore, letting $q\to\infty$, we obtain
  \begin{equation*}
 \|w_{\sigma,\infty}\|_{L^{\infty}(\Omega_{T})}\le\liminf_{q\to\infty} \|w_{\sigma,\infty}\|_{L^{\infty}(0,T,L^{q})(\Omega)}\le 1.
 \end{equation*}
\end{proof}

Compared to previous results on incompressible limits for living tissue models (see \eg \cite{Perthame-Hele-Shaw}), we have a slightly different relation linking the pressure and the density. We have the following proposition.
\begin{proposition}[Relation between $p_\infty$ and $w_\infty$]
The following relation holds for the limits of $p_{\sigma,\gamma}=w^{\gamma}_{\sigma,\gamma}$ and $w_{\sigma,\gamma}$,
\begin{equation*}
p_{\sigma,\infty}(w_{\sigma,\infty}-1)=0,\quad \text{a.e. in}\quad \Omega_T.	
\end{equation*}
\end{proposition}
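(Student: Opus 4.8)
The plan is to derive the relation pointwise, by combining almost-everywhere convergence of both $w_{\sigma,\gamma}$ and $p_{\sigma,\gamma}=w_{\sigma,\gamma}^{\gamma}$ with the one-sided bound $w_{\sigma,\infty}\le 1$ established in Proposition~\ref{winf<1}.

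First, I would fix a subsequence (not relabelled) along which $w_{\sigma,\gamma}\to w_{\sigma,\infty}$ and $p_{\sigma,\gamma}\to p_{\sigma,\infty}$ both hold almost everywhere in $\Omega_{T}$. This is legitimate: Theorem~\ref{th:convergence-gamma} already provides the strong $L^{p}(\Omega_{T})$ convergence of $p_{\sigma,\gamma}$, while the uniform bounds of Proposition~\ref{prop:summary} (namely $w$ bounded in $L^{\infty}(0,T,H^{2}(\Omega))$ and $\partial_{t}w$ bounded in $L^{2}(0,T,H^{1}(\Omega))$) yield strong convergence of $w_{\sigma,\gamma}$ through the Lions--Aubin lemma. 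A diagonal extraction then produces a.e. convergence of both sequences along a common subsequence.

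Next, I would argue at a generic point $(t,x)$ of convergence. Since $w_{\sigma,\infty}\le 1$ a.e., only two cases occur up to a null set. If $w_{\sigma,\infty}(t,x)<1$, then for $\gamma$ large enough one has $w_{\sigma,\gamma}(t,x)\le \tfrac{1}{2}\big(1+w_{\sigma,\infty}(t,x)\big)<1$, so that $p_{\sigma,\gamma}(t,x)=w_{\sigma,\gamma}(t,x)^{\gamma}\to 0$, whence $p_{\sigma,\infty}(t,x)=0$ and the product $p_{\sigma,\infty}(w_{\sigma,\infty}-1)$ vanishes. If instead $w_{\sigma,\infty}(t,x)=1$, the factor $w_{\sigma,\infty}-1$ vanishes and again the product is zero. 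Collecting the two cases yields $p_{\sigma,\infty}(w_{\sigma,\infty}-1)=0$ a.e. in $\Omega_{T}$.

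The main obstacle is the first step, i.e. securing a.e. convergence of both $w_{\sigma,\gamma}$ and $p_{\sigma,\gamma}$ along a single subsequence, since a priori these limits are taken in different topologies; once this is in hand, the pointwise dichotomy is immediate. As a consistency check, note that $p_{\sigma,\infty}\ge 0$ as an a.e. limit of nonnegative functions and $w_{\sigma,\infty}-1\le 0$, so the product is $\le 0$ a.e.; one could alternatively conclude by showing that its integral over $\Omega_{T}$ is nonnegative, but the pointwise route is shorter and avoids passing to the limit in $\int_{\Omega_{T}}(w_{\sigma,\gamma}^{\gamma+1}-w_{\sigma,\gamma}^{\gamma})$.
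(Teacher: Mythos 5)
Your proof is correct, but it follows a genuinely different route from the paper's. The paper obtains $p_{\sigma,\infty}(w_{\sigma,\infty}-1)\le 0$ exactly as you do (nonnegativity of $p_{\sigma,\infty}$ plus Proposition~\ref{winf<1}), but for the reverse inequality it uses an integral argument borrowed from Lions--Masmoudi: the elementary pointwise bound $w_{\sigma,\gamma}^{\gamma+1}\ge w_{\sigma,\gamma}^{\gamma}-\nu$, valid uniformly for $\gamma\ge\gamma_0(\nu)$ since $\sup_{x\ge 0}(x^{\gamma}-x^{\gamma+1})\to 0$, combined with convergence of the product $w_{\sigma,\gamma}^{\gamma}w_{\sigma,\gamma}\to p_{\sigma,\infty}w_{\sigma,\infty}$, gives $p_{\sigma,\infty}w_{\sigma,\infty}\ge p_{\sigma,\infty}-\nu$ for every $\nu>0$. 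You instead extract a common subsequence along which both $w_{\sigma,\gamma}$ and $p_{\sigma,\gamma}$ converge a.e.\ and run the pointwise dichotomy $w_{\sigma,\infty}<1$ versus $w_{\sigma,\infty}=1$; your extraction step is legitimate, since the strong $L^{p}(\Omega_T)$ convergence of $p_{\sigma,\gamma}$ is part of Theorem~\ref{th:convergence-gamma} and strong convergence of $w_{\sigma,\gamma}$ follows from the bounds in Proposition~\ref{prop:summary} via Lions--Aubin, just as you say. The trade-off: your argument is shorter and more elementary once a.e.\ convergence is in hand, but it genuinely needs the strong (hence a.e., up to subsequence) convergence of the pressure, which the paper secured through the Fr\'echet--Kolmogorov compactness argument; the paper's product/weak-limit route is more robust, since it would survive with only weak convergence of $p_{\sigma,\gamma}$ against strong convergence of $w_{\sigma,\gamma}$ — a situation common in stiff-limit problems where strong pressure compactness is the hard part. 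In the present setting both sets of hypotheses are available, so both proofs are complete.
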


\begin{proof}
The inequality $p_{\sigma,\infty}(w_{\sigma,\infty}-1)\le 0$ is found in a straightforward manner using Proposition~\ref{winf<1}, and the fact that $w_{\sigma,\gamma}\ge 0 \implies w_{\sigma,\gamma}^{\gamma}\ge 0\implies p_{\sigma,\infty}\ge 0$.

It remains to show that $p_{\sigma,\infty}(w_{\sigma,\infty}-1)\ge 0$. We borrow the argument of \cite{Lions-Masmoudi}. For $\nu>0$, there exists $\gamma_{0}$ such that for $\gamma\ge\gamma_{0}$,

\begin{equation*}
    w^{\gamma+1}_{\sigma,\gamma}\ge w^{\gamma}_{\sigma,\gamma}-\nu.
\end{equation*}
Then, from the convergence of $w_{\sigma,\gamma}$ and $w^{\gamma}_{\sigma,\gamma}$ we know that $w_{\sigma,\gamma}^{\gamma}w_{\sigma,\gamma}$ converges  to $p_{\sigma,\infty}w_{\sigma,\infty}$. Passing to the limit we get \begin{equation*}
p_{\sigma,\infty}w_{\sigma,\infty}\ge p_{\sigma,\infty}-\nu,
\end{equation*}
for every $\nu>0$. Letting $\nu\to 0$ yields the result.

\end{proof}

\begin{remark}
From this result we find that in the zone $w_{\infty}\ne 1,$ we get $p_{\infty}=0,$  and thus $\mu_{\infty}=-\delta\Delta w_{\infty}$ which can be interpreted as a term representing surface tension.
\end{remark}

When the relaxation parameter satisfy $\sigma=0$, we expect to have $w=n$. Therefore, in the zone $\Omega_{+}(t)=\{x, p_{\infty}(t,x)>0\}$ we obtain that the density stays constant $n_{\infty}=1$. With Equation~\eqref{eq:CH1} this means that formally 
\begin{equation*}
\begin{cases}
-\Delta\mu=G(\mu)\quad\text{in $\Omega_{+}$}, \\
\mu=-\delta\Delta n\quad\text{on $\partial\Omega_{+}$}.
\end{cases}
\end{equation*}

\section{Existence of weak solutions}\label{section:eps}
\label{sec:existence}
The proof of existence of weak solutions for system~\eqref{eq:CH1}--\eqref{eq:CH2} follows the standard method for the DCH (see \eg \cite{Garcke-CH-deg,perthame_poulain,Dai-Du}). We start by regularizing the model: using a small positive parameter $\eps$ we define a positive approximation of the degenerate mobility $b(n) := n$. Existence of a solution to the regularized system is found using standard methods for nonlinear parabolic equations. 
Then, we derive a priori estimates on the regularized problem that allow us to pass to the limit $\eps\to 0$ and, hence, show the existence of weak solution for System~\eqref{eq:CH1}--\eqref{eq:CH2}.

\subsection{Regularized mobility}
We consider a small parameter $0<\eps<1$ and define the regularized mobility
\begin{equation}
\label{regmob}
B_{\eps}(n)=
\begin{cases}
\frac{1}{\eps}\quad\text{for $n\ge \frac{1}{\eps}$,}	\\
\eps\quad\text{for $n\le\eps$,}\\
n\quad\text{otherwise}.
\end{cases}
\end{equation}
Thereby, we write the regularized analogous of System~\eqref{eq:CH1}--\eqref{eq:CH2} 
\begin{align}
\partial_{t} n_\eps-\Div(B_{\eps}(n_\eps)\nabla \mu_\eps)=n_\eps G(\mu_\eps),\quad \text{in}\quad &(0,+\infty)\times\Omega,\label{CHR1}\\
-\sigma\Delta\mu_\eps +\mu_\eps =(n_\eps-\frac{\sigma}{\delta}\mu_\eps)^{\gamma}-\delta\Delta n_\eps,\quad \text{in}\quad &(0,+\infty)\times\Omega,\label{CHR2}
\end{align}
supplemented with the zero-flux boundary conditions 
\begin{equation}
\label{boundaryR}
\frac{\partial w_\eps}{\partial\nu}=n_\eps\frac{\partial\mu_\eps}{\partial\nu}=0	\quad\text{on}\quad (0,\infty)\times\partial\Omega,
\end{equation}
where $w_{\eps}$ is defined by~\eqref{eq:defofw} and with the initial conditions~\eqref{initcond}.

We have the following existence theorem
\begin{thm}[Weak solutions for the regularized system] \label{th:global-weak-sol}
There exists a pair of functions $(n_{\eps},\mu_{\eps})$ such that for all $T\ge 0$, 
\begin{align*}
n_{\eps}&\in L^{2}(0,T,H^{1}(\Omega)),\quad\partial_{t}n_{\eps}\in L^{2}(0,T,H^{-1}(\Omega)),\\
\mu_{\eps}&\in L^{2}(0,T,H^{1}(\Omega)),\quad\partial_{t}\mu_{\eps}\in L^{2}(0,T,H^{-1}(\Omega)),\\
w_{\eps}&\in L^{2}(0,T,H^{2}(\Omega))\cap L^{\infty}(0,T,H^{1}(\Omega)),\quad w_{\eps}^{\gamma}\in L^{2}(0,T,L^{2}(\Omega))\quad\partial_{t}w_{\eps}\in L^{2}(0,T,H^{1}(\Omega)).
\end{align*}
These functions satisfy the regularized Cahn-Hilliard model~\eqref{CHR1}--\eqref{boundaryR} in the following weak sense: for all test functions $\chi\in L^{2}(0,T,H^{1}(\Omega)),$ it holds
\begin{align}
\int_{0}^{T}\langle \chi,\partial_{t}n_{\eps}\rangle&=-\int_{\Omega_{T}}B_{\eps}(n_{\eps})\nabla\mu_{\eps}\nabla\chi+\int_{\Omega_{T}}n_{\eps}G(\mu_{\eps})\chi,\\	
\sigma\int_{\Omega_{T}}\nabla\mu_{\eps}\nabla\chi+\int_{\Omega_{T}}\mu_{\eps}\chi&=\delta\int_{\Omega_{T}}\nabla n_{\eps}\nabla\chi+\int_{\Omega_{T}}\Big(n_{\eps}-\frac{\sigma}{\delta}\mu_{\eps}\Big)^{\gamma}\chi,\\
 -\sg  \Delta w_{\eps} + \f\sg\delta w_{\eps}^{\gamma}  + w_{\eps}  &= n_{\eps} \quad\text{a.e.},\label{eq:forw}\\
 \mu_{\eps}&=w_{\eps}^{\gamma}-\delta\Delta w_{\eps}\quad\text{a.e}.
\end{align}

\end{thm}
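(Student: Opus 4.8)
The plan is to follow the by-now standard strategy for degenerate Cahn--Hilliard equations \cite{Garcke-CH-deg,perthame_poulain}: since the regularized mobility \eqref{regmob} satisfies $\eps \le B_\eps \le 1/\eps$, the system \eqref{CHR1}--\eqref{CHR2} is uniformly parabolic for fixed $\eps$, so I would construct solutions by a Galerkin scheme followed by a compactness argument. First I fix the orthonormal basis $\{\phi_j\}_{j\ge1}$ of $L^2(\Omega)$ given by the eigenfunctions of the Neumann Laplacian (so the zero-flux boundary conditions \eqref{boundaryR} are built into the test space), set $V_N=\mathrm{span}\{\phi_1,\dots,\phi_N\}$, and seek $n^N,\mu^N\in V_N$ solving the two equations projected onto $V_N$. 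The second, elliptic equation carries no time derivative: for a given $n^N$ the map $\mu\mapsto -\sigma\Delta\mu+\mu-\big(n^N-\tfrac{\sigma}{\delta}\mu\big)^\gamma$ is strongly monotone on $V_N$ — the coercive part comes from $\sigma\|\nabla\mu\|_{L^2}^2+\|\mu\|_{L^2}^2$, and since $t\mapsto\max(0,t)^\gamma$ is nondecreasing while $\mu\mapsto n^N-\tfrac{\sigma}{\delta}\mu$ is decreasing, the nonlinear term contributes with the right sign. Hence $\mu^N=\Phi_N(n^N)$ is uniquely determined and depends locally Lipschitz-continuously on $n^N$; substituting into the projected first equation turns it into a system of ODEs for the coefficients of $n^N$ with a locally Lipschitz right-hand side, so Cauchy--Lipschitz yields a local-in-time solution.

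Next I derive a priori bounds uniform in $N$ (but allowed to depend on the fixed $\eps$). Testing the first equation with $\mu^N$ and using $n^N=w^N+\tfrac{\sigma}{\delta}\mu^N$ reproduces the energy dissipation $\frac{d}{dt}\mathcal E[n^N,\mu^N]=-\int_\Omega B_\eps(n^N)|\nabla\mu^N|^2+\int_\Omega n^N\mu^N G(\mu^N)$; assumption \eqref{eq:assumption G} bounds $|\mu G(\mu)|$ and a Gronwall argument on $\frac{d}{dt}\int_\Omega n^N=\int_\Omega n^N G(\mu^N)$ controls the mass, so the energy furnishes uniform bounds on $w^N$ in $L^\infty(0,T;H^1\cap L^{\gamma+1})$, on $\mu^N$ in $L^\infty(0,T;L^2)$, and on $\int_0^T\int_\Omega B_\eps(n^N)|\nabla\mu^N|^2$, whence $\mu^N\in L^2(0,T;H^1)$ using $B_\eps\ge\eps$. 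Then $n^N=w^N+\tfrac{\sigma}{\delta}\mu^N$ is bounded in $L^2(0,T;H^1)$. Testing the elliptic equation \eqref{eq:forw} for $w^N$ with $(w^N)^\gamma$ and applying Young's inequality against the $L^2$ bound on $n^N$ gives $w^\gamma\in L^2(\Omega_T)$; the relation $\mu^N=(w^N)^\gamma-\delta\Delta w^N$ with elliptic regularity then yields $w^N\in L^2(0,T;H^2)$. Finally, $\partial_t n^N\in L^2(0,T;H^{-1})$ follows from the first equation (here $B_\eps\le1/\eps$ is used), and differentiating \eqref{eq:forw} in time and testing with $z=\partial_t w^N$ — the monotone lower-order terms falling on the good side — gives $\partial_t w^N\in L^2(0,T;H^1)$, hence $\partial_t\mu^N=\tfrac{\delta}{\sigma}(\partial_t n^N-\partial_t w^N)\in L^2(0,T;H^{-1})$. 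These bounds preclude finite-time blow-up of the ODE and give global existence.

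With these uniform estimates I pass to the limit $N\to\infty$. Weak-$\ast$ and weak limits exist in all the listed spaces; the Aubin--Lions lemma applied to $n^N$ (resp. $w^N$), bounded in $L^2(0,T;H^1)$ with time derivative in $L^2(0,T;H^{-1})$ (resp. $H^1$), gives strong convergence in $L^2(\Omega_T)$ and, along a subsequence, a.e. convergence; consequently $B_\eps(n^N)\to B_\eps(n)$ and $(w^N)^\gamma\to w^\gamma$ are identified by continuity and the uniform $L^2$ bounds, while the products $B_\eps(n^N)\nabla\mu^N$ pass to the limit as strong$\times$weak. The delicate point — and the one the introduction flags as the novelty — is the source term $n^N G(\mu^N)$: since $G$ is merely continuous, passing to the limit requires strong (a.e.) convergence of $\mu^N$, not just weak convergence. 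This is exactly what $\mu^N\in L^2(0,T;H^1)$ and $\partial_t\mu^N\in L^2(0,T;H^{-1})$ deliver through Aubin--Lions, giving $\mu^N\to\mu$ in $L^2(\Omega_T)$ and a.e.; then $G(\mu^N)\to G(\mu)$ a.e. and boundedly, and dominated convergence identifies $nG(\mu)$. I expect the main obstacle to be precisely this time-derivative estimate on $\mu^N$, since it forces one to differentiate the nonlinear elliptic equation for $w$ in time and to verify that its monotone structure survives the differentiation; once strong convergence of $\mu^N$ is secured, checking the four weak and pointwise relations in the statement is routine.
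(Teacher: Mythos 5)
Your proposal is correct and follows essentially the same route as the paper: a Galerkin scheme on the Neumann eigenbasis, the discrete energy dissipation combined with a Gronwall mass bound and $B_\eps\ge\eps$ to get the $\eps$-dependent $H^1$ bounds, the key time-derivative estimate obtained by differentiating the elliptic $w$-equation in time and testing with $\partial_t w^N$ (whose monotone terms have the right sign), and Aubin--Lions to secure the strong convergence of $\mu^N$ needed for the source term $n G(\mu)$. Your only deviations are cosmetic --- you apply Aubin--Lions to $\mu^N$ directly instead of to $w^N$ together with $\mu=\frac{\delta}{\sigma}(n-w)$, and you test the $w$-equation with $(w^N)^\gamma$ plus elliptic regularity instead of with $-\Delta w^N$ --- and you are in fact slightly more careful than the paper in justifying, via strong monotonicity of $\mu\mapsto-\sigma\Delta\mu+\mu-(n^N-\frac{\sigma}{\delta}\mu)^\gamma$, that the implicit Galerkin relation determines $\mu^N$ Lipschitz-continuously from $n^N$, which is what Cauchy--Lipschitz actually requires.
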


\begin{proof}
\textit{Step 1. Galerkin approximation.}
We consider $\{\phi_{i}\}_{i\in\N}$. the eigenfunctions of the Laplace operator with zero Neumann boundary conditions.
\begin{equation}
-\Delta\phi_{i}=\lambda_{i}\phi_{i}\in\Omega\quad\text{with}\quad \nabla\phi_{i}\cdot\nu=0\quad\text{on $\partial\Omega$},	
\label{eq:base-galerkin}
\end{equation}
which form an orthogonal basis of both $H^{1}(\Omega)$ and $L^{2}(\Omega)$ and we normalize them such that $(\phi_{i},\phi_{j})_{L^{2}(\Omega)}=\delta_{ij}.$ Furthermore we assume without loss of generality that $\lambda_{1}=0.$\\
We consider the following discrete approximation of System~\eqref{CHR1}-\eqref{CHR2}
\begin{align}
n^{N}(t,x)&=\sum_{i=1}^{N}c_{i}^{N}(t)\phi_{i}(x),\quad \mu^{N}(t,x)=\sum_{i=1}^{N}d_{i}^{N}(t)\phi_{i}(x),\\
\int_{\Omega}\partial_{t}n^{N}\phi_{j}&=-\int_{\Omega}B_{\eps}(n^{N})\nabla\mu^{N}\nabla\phi_{j}+\int_{\Omega}n^{N}G(\mu^{N})\phi_{j}, \quad \text{for $j=1,...,N,$}\label{Galerkin1}\\
\int_{\Omega}\mu^{N}\phi_{j}&=\delta\int_{\Omega}\nabla\Big(n^{N}-\frac{\sigma}{\delta}\mu^{N}\Big)\nabla\phi_{j}+\int_{\Omega}\Big(n^{N}-\frac{\sigma}{\delta}\mu^{N}\Big)^{\gamma}\phi_{j}, \quad \text{for $j=1,...,N,$}\label{Galerkin2}\\
n^{N}(0,x)&=\sum_{i=1}^{N}(n_{0},\phi_{i})_{L^{2}(\Omega)}\phi_{i}.
\end{align}
where the coefficients $c_{j}^{N}, d_{j}^{N}$ for $j=1,..,N$ are determined by 
\begin{align}
\partial_{t}c_{j}^{N}&=-	\int_{\Omega}B_{\eps}\Big(\sum_{i=1}^{N}c_{i}^{N}\phi_{i}\Big)\nabla\mu^{N}\nabla\phi_{j}+c_{j}^{N}\int_{\Omega}G(\mu^{N})\label{initvalue1},\\
d_{j}^{N}(1+\sigma\lambda_{j})&=\delta\lambda_{j}c_{j}^{N}+\int_{\Omega}\Big(\sum_{i=1}^{N}(c_{i}^{N}-\frac{\sigma}{\delta}d_{i}^{N})\phi_{i}\Big)^{\gamma}\phi_{j}\label{initvalue2},\\
c_{j}^{N}(0)&=(n_{0},\phi_{j})_{L^{2}(\Omega)}\label{initvalue3}.
\end{align}

Since the right hand side of Equation~\eqref{initvalue1} depends continuously on the coefficients $c_{j}^{N}$, standard results on ODE systems gives the existence and uniqueness of a local solution to the initial value problem~\eqref{initvalue1}--\eqref{initvalue3}.\par

\noindent \textit{Step 2. Inequalities and convergences.} Multiplying Equation~\eqref{Galerkin1} by $d_{j}(t)$ and summing over $j$ leads to 
\begin{equation*}
\int_{\Omega}\partial_{t}n^{N}\mu^{N}=-\int_{\Omega}	B_{\eps}(n^{N})|\nabla\mu^{N}|^{2}+\int_{\Omega}n^{N}G(\mu^{N})\mu^{N}.
\end{equation*}
Rearranging the left-hand side, we obtain 
\begin{equation*}
\int_{\Omega}\partial_{t}n^{N}\mu^{N}=\int_{\Omega}\partial_{t}\Big(n^{N}-\frac{\sigma}{\delta}\mu^{N}\Big)\mu^{N}+\frac{1}{2}\frac{\sigma}{\delta}\frac{d}{dt}\int_{\Omega}|\mu^{N}|^{2}.	
\end{equation*}
Using in Equation~\eqref{Galerkin2}, $\phi_j = \f{d}{dt}(c_{j}^{N}-\frac{\sigma}{\delta}d_{j}^{N})\phi_{j}$ and summing over $j$ , we have 
\begin{equation*}
\int_{\Omega}\partial_{t}\Big(n^{N}-\frac{\sigma}{\delta}\mu^{N}\Big)\mu^{N}=\frac{\delta}{2}\frac{d}{dt}\int_{\Omega}\Big|\nabla\Big(n^{N}-\frac{\sigma}{\delta}\mu^{N}\Big)\Big|^{2}+\frac{d}{dt}\int_{\Omega}\frac{\Big(n^{N}-\frac{\sigma}{\delta}\mu^{N}\Big)^{\gamma+1}}{\gamma+1}.
\end{equation*}
Altogether, we obtain the discrete energy dissipation
\begin{equation}
\label{Energy-Galerkin}
\frac{d}{dt}E(t)+\int_{\Omega}B_{\eps}(n^{N})|\nabla\mu^{N}|^{2}=\int_{\Omega}n^{N}G(\mu^{N})\mu^{N},	
\end{equation}
in which the energy is defined by
\begin{equation*}
E(t)=\frac{\delta}{2}\int_{\Omega}\Big|\nabla\Big(n^{N}-\frac{\sigma}{\delta}\mu^{N}\Big)\Big|^{2}+\int_{\Omega}\frac{\Big(n^{N}-\frac{\sigma}{\delta}\mu^{N}\Big)^{\gamma+1}}{\gamma+1}+\frac{1}{2}\frac{\sigma}{\delta}\int_{\Omega}|\mu^{N}|^{2}.	
\end{equation*}

Taking $j=1$ in~\eqref{Galerkin1} leads to $\partial_{t}\int_{\Omega}n^{N}=\int_{\Omega}\phi_{1}\int_{\Omega}n^{N}G(\mu^{N})\phi_{1}$ where $\phi_{1}$ is constant, and with the assumptions on $G$ (see~\eqref{eq:assumption G}), together with Gronwall's inequality we find 
\begin{equation}
\label{PWG}
\Big|\int_{\Omega}n^{N}\Big|\le C,
\end{equation}
where $C$ is a positive constant independent of $N$.
Using this inequality in~\eqref{Energy-Galerkin} and the assumptions on the source term $G$, we have 
\begin{equation}
\frac{d}{dt}E(t)+\int_{\Omega}B_{\eps}(n^{N})|\nabla\mu^{N}|^{2}\le C.		
\end{equation}
Altogether, we find the following inequalities
 \begin{align}
\frac{\delta}{2}\int_{\Omega}\Big|\nabla\Big(n^{N}-\frac{\sigma}{\delta}\mu^{N}\Big)\Big|^{2}&\le C,\label{Galerkin21}\\
\frac{\sigma}{2\delta}\int_{\Omega}|\mu^{N}|^{2}&\le C,\label{Galerkin22}\\
\int_{\Omega_{T}}B_{\eps}(n^{N})|\nabla\mu^{N}|^{2}&\le C,\label{Galerkin23}\\
\int_{\Omega}\frac{\Big(n^{N}-\frac{\sigma}{\delta}\mu^{N}\Big)^{\gamma+1}}{\gamma+1}&\le C\label{Galerkin26}.
 \end{align}
 
 Using the definition~\eqref{regmob} of $B_{\eps}(n)$ and Inequality~\eqref{Galerkin23} we find a control on $|\nabla\mu^{N}|^{2}$. Combined with Inequality~\eqref{Galerkin21}, we have
 \begin{equation}
 \int_{\Omega_{T}}|\nabla \mu^{N}|^{2}\le C, \quad  \int_{\Omega_{T}}|\nabla n^{N}|^{2}	\le C. \label{Galerkin25}\\
 \end{equation}
 
 Using~\eqref{PWG},~\eqref{Galerkin25} and the Poincaré-Wirtinger inequality, we obtain the following convergence as $N\to +\infty$
\begin{equation}
\label{weakconvergence1}
n^{N}\rightharpoonup n_{\eps}\quad\text{weakly in $L^{2}(0,T,H^{1}(\Omega))$}.
\end{equation}
From \eqref{Galerkin21}-\eqref{Galerkin22},  we find that the coefficients $c_{j}^{N}, d_{j}^{N}$ are bounded and a global solution to~\eqref{initvalue1}--\eqref{initvalue3} exists. Choosing $j=1$ in (\ref{Galerkin2}) gives

\begin{equation*}
\int_{\Omega}\mu^{N}=\int_{\Omega}\Big(n^{N}-\frac{\sigma}{\delta}\mu^{N}\Big)^{\gamma},
\end{equation*}
which is bounded thanks to~\eqref{Galerkin26} and the Hölder inequality. Therefore, combining~\eqref{Galerkin25} and the Poincaré-Wirtinger inequality, we obtain
\begin{equation}
\label{weakconvergence2}
\mu^{N}\rightharpoonup \mu_{\eps}\quad\text{weakly in $L^{2}(0,T,H^{1}(\Omega))$}.
\end{equation} 

Then, denoting by $\Pi_{N}$ the projection operator from $L^{2}(\Omega)$ to $\text{span}\{\phi_{1},...\phi_{N}\}$, using Equation~\eqref{Galerkin1}, we have for every test functions $\phi\in L^{2}(0,T,H^{1}(\Omega))$,
\begin{align*}
\Big|\int_{\Omega_{T}}\partial_{t}n^{N}\phi\Big|&=\Big|\int_{\Omega_{T}}\partial_{t}n^{N}\Pi_{N}\phi\Big|\\
&=\Big|\int_{\Omega_{T}}B_{\eps}(n^{N})\nabla\mu^{N}\nabla\Pi_{N}\phi+\int_{\Omega_{T}}n^{N}G(\mu^{N})\Pi_{N}\phi\Big|\\
&\le C \Big(\int_{\Omega_{T}}B_{\eps}(n^{N})|\nabla\mu^{N}|^{2}\Big)^{1/2}\Big(\int_{\Omega_{T}}|\nabla\Pi_{N}\phi|^{2}\Big)^{1/2}+C\Big(\int_{\Omega_{T}}|\Pi_{N}\phi|^{2}\Big)^{1/2}\\
&\le C\|\phi\|_{L^{2}(0,T,H^{1}(\Omega))},
\end{align*}
where the last inequality is obtain from~\eqref{Galerkin23}.
Therefore, from the previous result, we can extract a subsequence such that 
\begin{equation}
\label{weakconvergence3}
\partial_{t}n^{N}\rightharpoonup	\partial_{t}n_{\eps}\text{weakly in $L^{2}(0,T,H^{-1}(\Omega))$}.
\end{equation}
Hence, combining the weak convergences~\eqref{weakconvergence1} and~\eqref{weakconvergence3}, and using the Lions-Aubin lemma we obtain the strong convergence
\begin{equation}
\label{strongconvergence1}
n^{N}\to  n_{\eps}\quad\text{strongly in $L^{2}(0,T,L^{2}(\Omega))$}.
\end{equation}
\par

\noindent\textit{Step 3. Strong compactness for $\mu^N$.}
To identify the limit in the source term, we need to find strong compactness for the potential $\mu^{N}$. As we have strong compactness for $n^{N}$, it is enough to find strong compactness for $w^{N}$.

Using the notation $w^N = n^N-\f{\sg}{\delta}\mu^N$, we change Equation~\eqref{Galerkin2} in
\[
    \sg  \int_\Omega \nabla w^N \nabla \phi_j + \f\sg\delta\int_\Omega (w^N)^\gamma \phi_j + \int_\Omega w^N \phi_j = \int_\Omega n^N \phi_j,
\]
with $w^N(t,x) = \sum_{i=1}^N q_i^N(t)\phi_i(x)$. The coefficients $q_j^N$ for $j=1,\dots,N$ are determined by the equation 
\begin{equation}
    \left(\sg  \lambda_j + 1 \right)q_j^N + \f\sg\delta\int_\Omega \left(\sum_{i=1}^N q_i^N(t)\phi_i(x)\right)^\gamma \phi_j = c_j^N.
    \label{eq:coeff-wN}
\end{equation}
Thus, denoting $z=\partial_{t}w$, and computing the time derivative of the previous equation, we obtain 
\[
\left( \sg  \lambda_j + 1 \right) \f{d}{dt}q_j^N + \f\sg\delta \f{d}{dt} \int_\Omega \left(\sum_{i=1}^N q_i^N(t)\phi_i(x)\right)^\gamma \phi_j = \f{d}{dt} c_j^N.
\]
We use the notation $\p_t w^N = z^N$, 
Multiplying the previous equation by $\phi_j (\p_t w^N)$, summing over the $j$ and integrating over the domain lead to 
\[
\sg \int_\Omega \abs{\nabla z^N}^2 + \int_\Omega \abs{z^N}^2 + \f{\gamma \, \sg}{\delta}\int_\Omega (w^N)^{\gamma-1}\abs{z^N}^2 = \int_\Omega \p_tn^N z^N.
\]
Integrating the previous equation in time, and from the use of Young's inequality on the right hand side, we have
\begin{equation*}
\int_{\Omega_{T}}\sigma|\nabla z^N|^{2}+\abs{z^N}^{2}\le C\|\partial_{t}n^N\|_{L^{2}(0,T,H^{-1}(\Omega))}^{2}+\frac{\sigma}{2}\|z^N\|_{L^{2}(0,T,H^{1}(\Omega))}^{2}    
\end{equation*}
Then, assuming $1-\f\sg2>0$, we find $\norm{z}_{L^{2}(0,T,H^{1}(\Omega))}^2\le C$. 

This previous result allows us to pass to the limit in the source term using the Lions-Aubin lemma. We refer to \cite{perthame_poulain} to pass to the limit in other terms of the equation and detail the differences. 

Lastly, it remains to prove the regularity of $w$, $w^{\gamma}$. To do so, we use Equation~\eqref{eq:coeff-wN} and successively multiply it by $- \phi_j \Delta w^N $, integrate it in space, use integration by parts, and integrate with respect to time, to arrive to
\begin{equation*}
\sigma\int_{\Omega_{T}}|\Delta w^{N}|^{2}+ \int_\Omega |\nabla w^{N}|^{2}+\frac{\sigma}{\delta}\gamma (w^{N})^{\gamma-1}|\nabla w^{N}|^{2}=-\int_{\Omega_{T}}n^{N}\Delta w^{N}.
\end{equation*}
Since $n^N$ is bounded $L^{2}(0,T,H^{1}(\Omega))$, we can use Young inequality and elliptic regularity to find that $w^N$ is bounded in $L^{2}(0,T,H^{2}(\Omega))$. Inequality~\eqref{Galerkin21} provides the $L^{\infty}(0,T,H^{1}(\Omega))$ bound. Finally the regularity of $(w^{N})^{\gamma}$ comes from Equation~\eqref{eq:KS2}.

This concludes the proof.

\commentout{
The second part of the term in the left hand side is split in two parts, the second one which is positive since $\gamma$ is odd and the first one which is
\begin{equation*}
\gamma(\gamma-1)\int_{\Omega_{T}}\Delta w^{N} (w^{N})^{\gamma-2}|\nabla w^{N}|^{2}.
\end{equation*}
 Using Hölder inequality for the three functions we can bound uniformly this last term. This provides the bound on $\nabla\Delta w$ in $L^{2}(0,T,L^{2}(\Omega))$. We conclude by elliptic regularity. 
\textcolor{red}{Do we have $w\in H^{2}$ implies $w$ in every $L^{p}$ in dimension $d\le 4$? Or just $d\le 3$}
\textcolor{red}{Since $w\in H^{3}$ we even have $w\in W^{1,\infty}$ if $d\le 3$}.
}

\end{proof}

\commentout{
\subsection{Nonnegativity of the solutions}
\textcolor{red}{À enlever}
\begin{proposition}
Let $(n,\mu)$ be weak solutions  to~\eqref{eq:CH1} and~\eqref{eq:CH2} in a sense defined later on with an initial condition $n^{0}\ge 0$. Then, $n(t)$ is nonnegative for every $t$.
\end{proposition}

\begin{proof}
We define $|n|_{-}=-n_{-}=\max(0,-n)$  and we multiply Equation~\eqref{eq:CH1} by $\mathbbm{1}_{\{n<0\}}$.
This yields
\begin{equation*}
\frac{d}{dt}|n|_{-}-\Div(|n|_{-}\nabla \mu)=|n|_{-}G(\mu).
\end{equation*}
Integrating in space, using the boundary conditions and the assumption on $G$ gives
\begin{equation*}
\frac{d}{dt}\int_{\Omega}|n|_{-}\le C\int_{\Omega}|n|_{-}
\end{equation*}
With Grönwall's lemma and the assumption on the initial condition, we obtain the result.
\end{proof}

\begin{proposition}
Let $w$ be a weak solution to ..... Then, $w\ge 0$.	
\end{proposition}

\begin{proof}
From the definition of $w$ we have $-\Delta w+w^{\gamma}+w=n$ where we supposed $\delta=\sigma=1$ for sake of clarity.\\
Since $n\ge 0$, this definition implies that for every function $\eta\ge 0$, we have
\begin{equation*}
\int_{\Omega}\nabla w\cdot\nabla\eta +\int_{\Omega}w^{\gamma}\eta+\int_{\Omega}w\eta\ge 0.	
\end{equation*}
Using this inequality with $\eta=-w_{-}$ we find
\begin{equation*}
\int_{\Omega}|\nabla w_{-}|^{2}+\int_{\Omega}w_{-}^{2}\le-\int_{\Omega}w_{-}^{\gamma+1}.
\end{equation*}
The term of the right hand side is negative since $\gamma+1$ is even. This yields the result.
\end{proof}
}
\subsection{A priori estimates} \label{sec:apriori-eps}
To show the existence of weak solutions of the non-regularized model~\eqref{eq:CH1}--\eqref{eq:CH2}, the idea is to pass to the limit $\eps \to 0$ in System~\eqref{CHR1}--\eqref{CHR2}. 
A priori estimates derived from the regularized model help us to find the required compacity and pass to the limit in the model.
The computations of the a priori estimates follow closely the paper \cite{perthame_poulain} where the case of a single-well potential was considered. The addition of the source term $G$, which is not present in \cite{perthame_poulain}, induces the need of new computations. The second main difference is that we consider a potential term $w^{\gamma}$ in~\eqref{eq:CH2} instead of a smooth and bounded potential as it was done before.

We start by defining the entropy of the system 
\begin{equation*}
\Phi_{\eps}[n]=\int_{\Omega}\phi_{\eps}(n(x))dx,\quad \phi_{\eps}''(x)=\frac{1}{B_{\eps}(n)},\quad\phi_{\eps}(0)=\phi_{\eps}'(0)=0,
\end{equation*}
and we recall the energy

\begin{equation*}
\mathcal{E}_{\eps}[n]=\int_{\Omega}\frac{(n-\frac{\sigma}{\delta}\mu)^{\gamma+1}}{\gamma+1}+\frac{\delta}{2}\Big|\nabla \Big(n-\frac{\sigma}{\delta}\mu\Big)\Big|^{2}+\frac{\sigma}{\delta}\frac{|\mu|^{2}}{2}.
\end{equation*}

\commentout{
\begin{proposition}[Improved regularity]
We have in fact
\begin{align*}
n&\in L^{\infty}(\Omega_{T}),\quad \partial_{t}n\in L^{2}(0,T,H^{-1}(\Omega)),\\
\partial_{t}\mu&\in L^{2}(0,T,H^{-1}(\Omega)),\\
\partial_{t}w&\in L^{2}(0,T,H^{1}(\Omega)).
\end{align*}
\end{proposition}
We refer the reader to the following section for the proof. 
}

In comparison with the previous subsection, when $\eps\to 0$ we loose the uniform $L^{2}(\Omega_{T})$ bound on $\nabla\mu$ which was obtained with the regularized mobility. This bound is actually recovered with the estimates provided by the entropy that could not be used in the Galerkin scheme. Also,  when $\eps\to 0$, the time derivative of the density lies in a larger space than $L^{2}(0,T,H^{-1}(\Omega))$, which is $(L^{4}(0,T,W^{1,s}(\Omega)))'$ where $s>2$. This prevents us from using the previous computations and we lose the bound on $\partial_{t} w$ and $\partial_{t}\mu$. To recover their strong convergence and thus to identify the source term, we need to rely on the Fréchet-Kolmogorov theorem. This last difficulty is dealt with the following proposition 

\begin{proposition}[Compactness for $w_\eps$ and $\mu_\eps$]
\label{compactnessformu}
The sequences $(w_{\eps})_{\eps}$ and $(\mu_{\eps})_{\eps}$ converge strongly in $L^{1}(\Omega_{T})$.
\end{proposition}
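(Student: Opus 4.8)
The plan is to reduce the claim to a strong compactness statement for $w_\eps$ alone, and then to transfer the time regularity of $n_\eps$ to $w_\eps$ through the monotone elliptic relation \eqref{eq:forw}. First, recall that the uniform bound $n_\eps\in L^2(0,T,H^1(\Omega))$ together with the uniform bound $\partial_t n_\eps\in (L^4(0,T,W^{1,s}(\Omega)))'$ (the analogue at the $\eps$ level of Proposition~\ref{compactnessh2}) lets the Lions--Aubin lemma yield strong convergence $n_\eps\to n$ in $L^2(\Omega_T)$. Since $\sigma,\delta$ are fixed, the identity $\mu_\eps=\frac\delta\sigma(n_\eps-w_\eps)$ from \eqref{eq:defofw} shows that strong $L^1(\Omega_T)$ convergence of $\mu_\eps$ follows at once from the same property for $w_\eps$. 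Thus everything reduces to proving that $(w_\eps)_\eps$ is relatively compact in $L^2(\Omega_T)$, which on the bounded set $\Omega_T$ gives the desired $L^1$ convergence.

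For the compactness of $w_\eps$ I would invoke the Fréchet--Kolmogorov theorem, in the form used in Appendix~\ref{app:FK} (the interior-truncation issue at $\partial\Omega$ being handled as in the Remark preceding Proposition~\ref{strongconvw}). Uniform boundedness and spatial equicontinuity are the easy ingredients: from the energy/entropy bounds of this subsection (the analogues of \eqref{ine311}--\eqref{ine411}) together with elliptic regularity applied to \eqref{eq:forw}, the family $(w_\eps)$ is bounded in $L^\infty(0,T,H^1(\Omega))\cap L^2(0,T,H^2(\Omega))$, so the uniform $L^2(\Omega_T)$ bound on $\nabla w_\eps$ controls spatial translations. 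The one nontrivial point --- and the main obstacle, since we have deliberately lost any uniform bound on $\partial_t w_\eps$ as $\eps\to0$ --- is equicontinuity in time, i.e. that $\int_0^{T-h}\|w_\eps(t+h)-w_\eps(t)\|_{L^2(\Omega)}^2\,dt\to0$ as $h\to0$, uniformly in $\eps$.

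The key idea for the time estimate is to exploit the monotone structure of \eqref{eq:forw} rather than a time-derivative bound. Writing \eqref{eq:forw} at times $t+h$ and $t$, subtracting, and testing the difference with $w_\eps(t+h)-w_\eps(t)$ (legitimate thanks to the Neumann condition \eqref{boundaryR}), the Laplacian contributes $\sigma\|\nabla(w_\eps(t+h)-w_\eps(t))\|_{L^2(\Omega)}^2\ge0$, while the term coming from $\frac\sigma\delta w^\gamma$ is nonnegative because $s\mapsto\max(0,s)^\gamma$ is nondecreasing (and $w_\eps\ge0$ by Proposition~\ref{nonnegativities}). Discarding these two nonnegative contributions and using Cauchy--Schwarz on the right-hand side yields the pointwise-in-time comparison
\[
\|w_\eps(t+h)-w_\eps(t)\|_{L^2(\Omega)}\le \|n_\eps(t+h)-n_\eps(t)\|_{L^2(\Omega)}.
\]
Integrating in $t$ bounds the time modulus of continuity of $w_\eps$ by that of $n_\eps$; the latter tends to $0$ uniformly in $\eps$ because the strongly convergent --- hence relatively compact --- family $(n_\eps)$ has uniformly equicontinuous time translations, by the converse direction of the Fréchet--Kolmogorov theorem.

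With uniform boundedness, spatial equicontinuity, and this time equicontinuity in hand, Fréchet--Kolmogorov gives relative compactness of $(w_\eps)$ in $L^2(\Omega_T)$; extracting a subsequence we obtain strong $L^2(\Omega_T)$ convergence of $w_\eps$, hence strong $L^1(\Omega_T)$ convergence of $w_\eps$, and finally of $\mu_\eps=\frac\delta\sigma(n_\eps-w_\eps)$. I expect the entire difficulty to be concentrated in the time-equicontinuity step: the monotone elliptic comparison above is precisely what makes it work in the absence of the $\partial_t w_\eps$ bound, which is available only for fixed $\eps$.
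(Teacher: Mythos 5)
Your proof is correct and takes essentially the same route as the paper: subtract the elliptic relation \eqref{eq:forw} at times $t+h$ and $t$, discard the Laplacian and the monotone $w^\gamma$ contributions to bound the time translates of $w_\eps$ by those of $n_\eps$, conclude via Fréchet--Kolmogorov, and recover $\mu_\eps$ from $\mu_\eps=\frac{\delta}{\sigma}(n_\eps-w_\eps)$. The only (harmless) deviation is that you test with the difference $w_\eps(t+h)-w_\eps(t)$ itself to obtain an $L^2$ comparison, whereas the paper multiplies by $\sgn\left(w(t+h)-w(t)\right)$ to get the analogous $L^1$ comparison; your variant even bypasses the convex-approximation-of-the-absolute-value technicality the paper needs.
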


\begin{proof}
We use the Fréchet-Kolmogorov theorem as in Appendix~\ref{app:FK}. Indeed, we have the compactness in space for $w$ and Equation~\eqref{eq:KS2} provides
\begin{equation*}
-\sigma\Delta [w(t+h)-w(t)]+\frac{\sigma}{\delta}[w^{\gamma}(t+h)-w^{\gamma}(t)]+	w(t+h)-w(t)=n(t+h)-n(t).
\end{equation*}
Multiplying formally by $\sgn (w(t+h)-w(t))$ (rigorously by $\phi'(w(t+h)-w(t))$ where $\phi$ is a convex approximation of the absolute value), integrating in space and using Appendix~\ref{app:FK}, we obtain the compactness for $w$. Since $\mu=\frac{\delta}{\sigma}(n-w)$ we have compactness for $\mu$.

\end{proof}

\begin{remark}
The strong convergence actually holds in higher Lebesgue spaces since $(w_{\eps})_{\eps}$ is bounded uniformly in $L^{2}(0,T,H^{2}(\Omega))\cap L^{\infty}(0,T,L^{\gamma+1}(\Omega))$ and $(\mu_{\eps})_{\eps}$ in $L^{2}(0,T,H^{1}(\Omega))$. 	
\end{remark}

\subsection{Limit \texorpdfstring{$\eps \to 0$}{}}
From the previous estimates on the regularized model, we are now in position to prove the existence of global weak solutions for System~\eqref{eq:CH1}--\eqref{eq:CH2}.
\begin{thm}[Existence of weak solutions] \label{th:existence-sol}
Assume an initial condition satisfying~\eqref{initcond}. Then, for $\sigma$ small enough, there exists a global weak solutions $(n,\mu)$ of Equations~\eqref{eq:CH1}-\eqref{eq:CH2} such that
\begin{align*}
n&\in L^{2}(0,T,H^{1}(\Omega)),\quad\partial_{t}n\in (L^{4}(0,T,W^{1,s}(\Omega)))',\\
\mu&\in L^{2}(0,T,H^{1}(\Omega)),\\
w&\in L^{2}(0,T,H^{2}(\Omega))\cap L^{\infty}(0,T,H^{1}(\Omega)),\quad w^{\gamma}\in L^{2}(0,T,L^{2}(\Omega)),\\
n&\ge 0,\quad  w\ge 0,\quad\text{a.e. in $\Omega_{T}$},
\end{align*}
where $s$ is defined in proposition~\ref{compactnessh2}. Moreover, as $\eps\to 0$, the inequalities provided by the energy and entropy hold true.
\end{thm}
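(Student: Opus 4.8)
The plan is to pass to the limit $\eps\to 0$ in the regularized system \eqref{CHR1}--\eqref{CHR2}, whose weak solutions $(n_\eps,\mu_\eps)$ are furnished by Theorem~\ref{th:global-weak-sol}. First I would collect the $\eps$-uniform a priori estimates. The discrete energy computation reproduced in \eqref{Galerkin21}--\eqref{Galerkin26} survives the Galerkin limit and gives $w_\eps=n_\eps-\f\sg\delta\mu_\eps$ bounded in $L^\infty(0,T,H^1(\Omega))$, $\f\sg\delta\mu_\eps$ bounded in $L^\infty(0,T,L^2(\Omega))$, the repulsion bound on $(w_\eps)^{\gamma+1}$ in $L^\infty(0,T,L^1(\Omega))$, and $\int_{\Omega_T}B_\eps(n_\eps)|\nabla\mu_\eps|^2\le C$. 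The entropy $\Phi_\eps$, which could not be exploited at the Galerkin level, supplies the decisive bounds $\int_{\Omega_T}n_\eps|\nabla\mu_\eps|^2\le C$, $\f\sg\delta\int_{\Omega_T}|\nabla\mu_\eps|^2\le C$, and $\delta\int_{\Omega_T}|\Delta w_\eps|^2\le C$. Combined with elliptic regularity applied to \eqref{eq:forw}, these yield $w_\eps$ bounded in $L^2(0,T,H^2(\Omega))$ and, for the fixed value of $\sigma$, $n_\eps,\mu_\eps$ bounded in $L^2(0,T,H^1(\Omega))$.

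Next I would establish compactness. For the time derivative I repeat the computation of Proposition~\ref{compactnessh2}: writing $\int_\Omega\partial_t n_\eps\varphi=-\int_\Omega B_\eps(n_\eps)\nabla\mu_\eps\cdot\nabla\varphi+\int_\Omega n_\eps G(\mu_\eps)\varphi$ and splitting $B_\eps(n_\eps)\nabla\mu_\eps=B_\eps(n_\eps)^{1/2}\big(B_\eps(n_\eps)^{1/2}\nabla\mu_\eps\big)$, controlled through $\int_{\Omega_T}B_\eps(n_\eps)|\nabla\mu_\eps|^2\le C$ and the Sobolev embedding of $n_\eps\in L^2(0,T,H^1(\Omega))$, one obtains $\partial_t n_\eps$ bounded in $(L^4(0,T,W^{1,s}(\Omega)))'$ uniformly in $\eps$. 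The Lions--Aubin lemma then gives the strong convergence $n_\eps\to n$ in $L^2(\Omega_T)$, hence a.e. along a subsequence. Proposition~\ref{compactnessformu} provides strong $L^1(\Omega_T)$ convergence of $w_\eps\to w$ and $\mu_\eps\to\mu$, which, interpolated with the uniform $L^2(0,T,H^2)\cap L^\infty(0,T,L^{\gamma+1})$ and $L^2(0,T,H^1)$ bounds, upgrades to strong convergence in higher Lebesgue spaces and to a.e. convergence up to a subsequence. Weakly, $\nabla n_\eps\rightharpoonup\nabla n$ and $\nabla\mu_\eps\rightharpoonup\nabla\mu$ in $L^2(\Omega_T)$, and $\Delta w_\eps\rightharpoonup\Delta w$.

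The identification of the nonlinear terms is where the real work lies. The a.e. convergence of $w_\eps$ together with the uniform $L^{\gamma+1}$ bound identifies $w_\eps^\gamma\to w^\gamma$ by a Vitali/dominated-convergence argument, so the elliptic relations $-\sigma\Delta w+\f\sg\delta w^\gamma+w=n$ and $\mu=w^\gamma-\delta\Delta w$ pass to the limit. \emph{The main obstacle is the degenerate flux} $B_\eps(n_\eps)\nabla\mu_\eps$, since $\nabla\mu_\eps$ converges only weakly while the mobility degenerates. I would write $B_\eps(n_\eps)\nabla\mu_\eps=B_\eps(n_\eps)^{1/2}\cdot\big(B_\eps(n_\eps)^{1/2}\nabla\mu_\eps\big)$, use that $B_\eps(n_\eps)^{1/2}\to n^{1/2}$ strongly in every $L^p(\Omega_T)$, $p<\infty$ (from $n_\eps\to n$ a.e. and dominated convergence), and that $B_\eps(n_\eps)^{1/2}\nabla\mu_\eps$ is bounded in $L^2(\Omega_T)$ with weak limit identified as $n^{1/2}\nabla\mu$ through a strong-times-weak product; the flux then converges weakly to $n\nabla\mu$. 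The source term $n_\eps G(\mu_\eps)\to nG(\mu)$ is identified from the a.e. convergence of both $n_\eps$ and $\mu_\eps$ and the continuity and boundedness of $G$ in \eqref{eq:assumption G}, and it is precisely here that the strong compactness of $\mu_\eps$ from Proposition~\ref{compactnessformu} is indispensable.

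Finally, for the sign constraints and the limiting inequalities, nonnegativity of $n$ follows from the entropy bound in the spirit of Elliott--Garcke \cite{Garcke-CH-deg}: since $\phi_\eps''=1/B_\eps$ forces $\phi_\eps(s)\sim\tfrac{1}{2\eps}s^2$ on $\{s<0\}$, the uniform bound $\Phi_\eps[n_\eps]\le C$ gives $\int_{\{n_\eps<0\}}n_\eps^2\le C\eps\to 0$, whence $n^-=0$; nonnegativity of $w$ comes from testing $-\sigma\Delta w+\f\sg\delta w^\gamma+w=n\ge 0$ against $-w^-$. The energy and entropy inequalities survive the passage to the limit by weak lower semicontinuity of the convex gradient-squared and $(\cdot)^{\gamma+1}$ terms and by Fatou's lemma applied to the nonnegative dissipation terms. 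The restriction ``$\sigma$ small enough'' is inherited from the condition $1-\f\sg2>0$ used in Step~3 of Theorem~\ref{th:global-weak-sol} to bound $\partial_t w$. I expect the flux identification to be the delicate point, the source-term identification becoming routine once Proposition~\ref{compactnessformu} is available.
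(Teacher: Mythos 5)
Your proposal is correct and takes essentially the same route as the paper, whose proof of Theorem~\ref{th:existence-sol} simply cites the $\partial_t n$ estimate from Proposition~\ref{compactnessh2}, declares the $\eps\to 0$ passage a straightforward adaptation of Theorem~5 of \cite{perthame_poulain} using the computations of Section~\ref{sec:apriori-eps} (in particular Proposition~\ref{compactnessformu}, the Fr\'echet--Kolmogorov strong compactness of $w_\eps,\mu_\eps$ that you likewise use to identify the source term), and derives the sign constraints from Proposition~\ref{nonnegativities} via exactly your entropy argument for $n$ and the $-w_-$ test for $w$. You have merely written out the details the paper delegates to \cite{perthame_poulain} --- including the standard $B_\eps(n_\eps)^{1/2}\cdot\bigl(B_\eps(n_\eps)^{1/2}\nabla\mu_\eps\bigr)$ flux splitting and the correct tracing of the ``$\sigma$ small enough'' hypothesis to the condition $1-\tfrac{\sigma}{2}>0$ in Step~3 of Theorem~\ref{th:global-weak-sol} --- the only quibble being the overstated claim that $B_\eps(n_\eps)^{1/2}\to n^{1/2}$ strongly in every $L^p(\Omega_T)$, $p<\infty$, whereas your uniform bounds directly justify only strong $L^2$ convergence, which is all the strong-times-weak product argument needs.
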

\begin{proof}
The estimates on $\partial_{t}n$ has been proved in Section 2. The proof of this theorem is a straightforward adaptation of Theorem 5 in \cite{perthame_poulain} using the computation of Section~\ref{sec:apriori-eps}, therefore, we do not repeat the proof arguments here. The nonnegativities of $n$ and $w$ are a consequence of Proposition~\ref{nonnegativities}. 
\end{proof}

The regularity of the solutions is higher than it is expected in Theorem~\ref{th:existence-sol}. We refer to Section~\ref{sec:inpressible-limit} for the proof of this result.

\begin{proposition}[Nonnegativity of $n$ and $w$]
\label{nonnegativities}
The solution $(n,w)$ of System~\eqref{eq:KS1}--\eqref{eq:KS2} defined by Theorem~\ref{th:existence-sol} is nonnegative, \ie 
\[
    n(x,t),\, w(x,t)\ge 0, \quad \text{a.e. in } \Omega.   
\]
\end{proposition}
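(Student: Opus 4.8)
The plan is to prove the two nonnegativities in order, establishing $n\ge 0$ first and then deducing $w\ge 0$ from the elliptic equation~\eqref{eq:KS2}. Since positivity is not preserved pointwise by a fourth-order Cahn--Hilliard flow (so a direct sign/maximum-principle computation on the flux term $\Div(n\nabla\mu)$ would only be formal), I would obtain $n\ge 0$ through the entropy, working on the regularized problem~\eqref{CHR1}--\eqref{CHR2} of Section~\ref{sec:existence} and passing to the limit $\eps\to 0$ via the strong $L^2(\Omega_T)$ convergence of $n_\eps$.

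For the first step, recall the entropy $\Phi_\eps[n]=\int_\Omega\phi_\eps(n)$ with $\phi_\eps''=1/B_\eps$ and $\phi_\eps(0)=\phi_\eps'(0)=0$. Because $B_\eps\equiv\eps$ on $\{n\le\eps\}$, integrating twice gives $\phi_\eps(n)=\tfrac1{2\eps}n^2$ for every $n\le 0$, hence
\begin{equation*}
\int_\Omega (n_{\eps,-})^2\le 2\eps\,\Phi_\eps[n_\eps](t),\qquad n_{\eps,-}:=\max(0,-n_\eps).
\end{equation*}
The regularized analogue of the entropy relation for $\tfrac{d}{dt}\Phi[n]$ recalled in the introduction has nonpositive principal part; the only positive contribution is the source term $\int_\Omega\phi_\eps'(n_\eps)\,n_\eps G(\mu_\eps)$, which is controlled by $C(1+\Phi_\eps[n_\eps])+C|\log\eps|$ using~\eqref{eq:assumption G} and $\phi_\eps'(n)=n/\eps$ on $\{n\le 0\}$. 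A Gr\"onwall argument then yields $\Phi_\eps[n_\eps](t)\le C(T)\,|\log\eps|$, the initial entropy $\Phi_\eps[n^0]$ being itself of order $|\log\eps|$ since $0\le n^0\le 1$. Consequently $\int_\Omega(n_{\eps,-})^2\le C(T)\,\eps|\log\eps|\to 0$, and the strong $L^2$ limit gives $n_-=0$, i.e.\ $n\ge 0$ a.e.

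For the second step I would use that the limit $w$ satisfies~\eqref{eq:KS2} almost everywhere; taking $\sigma=\delta=1$ for clarity, its weak form reads $\int_\Omega\nabla w\cdot\nabla\eta+\int_\Omega w^\gamma\eta+\int_\Omega w\eta=\int_\Omega n\eta$ for all $\eta\in H^1(\Omega)$. Testing at a.e.\ time with the admissible function $\eta=w_-:=\max(0,-w)\ge 0$, and using $\nabla w\cdot\nabla w_-=-|\nabla w_-|^2$, the disjoint supports $w_+w_-=0$ (so that $w^\gamma w_-=\max(0,w)^\gamma w_-=0$ with the standing convention $w^\gamma=\max(0,w)^\gamma$), and $ww_-=-w_-^2$, the identity collapses to
\begin{equation*}
\int_\Omega|\nabla w_-|^2+\int_\Omega w_-^2=-\int_\Omega n\,w_-\le 0,
\end{equation*}
the right-hand side being nonpositive precisely because $n\ge 0$ from the first step. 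Hence $w_-=0$, that is $w\ge 0$ a.e.

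The main obstacle is the uniform-in-$\eps$ entropy bound behind $n\ge 0$: unlike in~\cite{perthame_poulain}, the source term forces the extra contribution $\int_\Omega\phi_\eps'(n_\eps)n_\eps G(\mu_\eps)$ into the entropy balance, and one must check that its growth (at worst of order $\tfrac1\eps\int(n_{\eps,-})^2$ on the negative set and of order $|\log\eps|$ elsewhere) is absorbed by the dissipation and by Gr\"onwall's lemma, so that multiplication by the vanishing factor $\eps$ still drives $\int(n_{\eps,-})^2$ to zero. All the formal manipulations (differentiating $\Phi_\eps$, testing with $w_-$) are justified through the smooth convex approximations already employed in the construction of Section~\ref{sec:existence}.
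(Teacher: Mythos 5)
Your proposal is correct and follows essentially the same route as the paper: the paper deduces $n\ge 0$ from the uniform-in-$\eps$ entropy bound by citing \cite{perthame_poulain} (you simply spell out that argument, including the correct observation that an $O(|\log\eps|)$ entropy growth still suffices since $\eps|\log\eps|\to 0$ kills $\int(n_{\eps,-})^2$), and then obtains $w\ge 0$ by exactly your second step, testing \eqref{eq:KS2} with the negative part of $w$ and using the convention $w^\gamma=\max(0,w)^\gamma$ together with $n\ge 0$ to conclude $\int_\Omega|\nabla w_-|^2+\int_\Omega w_-^2\le 0$.
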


\begin{proof}
The proof of the nonnegativity of the density $n$ follows the same argument than \cite{perthame_poulain} that uses the boundedness of the entropy (uniform in $\eps$). Hence, we do not repeat the proof here.   
Now, we recall that the repulsive potential from~\eqref{eq:KS2} is actually defined by assumption in the introduction as $w^{\gamma}= \max(0,w^{\gamma})$.
Then, since $n\ge 0$, Equation~\eqref{eq:KS2} implies that for every function $\eta\ge 0$, we have 
\begin{equation*}
\int_{\Omega}\nabla w\cdot\nabla\eta +\int_{\Omega}\max(0,w^{\gamma})\eta+\int_{\Omega}w\eta\ge 0,	
\end{equation*}
where we supposed $\sigma=\delta=1$ for the sake of clarity.
Using the previous inequality, and choosing $\eta=-w_{-}$ (where $w_{-}$ represents the negative part of $w$), we obtain
\begin{equation*}
\int_{\Omega}|\nabla w_{-}|^{2}+\int_{\Omega}w_{-}^{2}\le 0.
\end{equation*}
This achieves the proof of the nonnegativity of $w$.   
\end{proof}

\begin{remark}
A main difference with \cite{perthame_poulain} is that we can not find an upper bound using an entropy argument. Indeed, the result in~\cite{perthame_poulain} relies on the singularity of the potential at $n=1$. Furthermore, Dai and Du~\cite{Dai-Du} notice that with a smooth potential at $n=1$, one cannot prevent the solution from exceeding this threshold. This is due to the Gibbs-Thomson effect, where the mean curvature of the interface plays a role in the concentration of the phases. 

However, from a different argument relying on the Alikakos iteration method~\cite{Alikakos}, see Inequality~\eqref{ine32}, we are able to show an $L^\infty$-bound for System~\eqref{eq:KS1}--\eqref{eq:KS2} and therefore better regularity for the functions of our system.
\end{remark}

\appendix

\section{Compactness with the Fréchet-Kolmogorov theorem}
\label{app:FK}

We provide another method to prove strong compactness in $\gamma$ for $n$ only using the fact that $n\nabla\mu$ and $\nabla n$ are integrable. This proposition can be applied to prove compactness in $\sigma$ and $\eps$ with the parameters defined respectively in Section 2 and 4.\\

\begin{proposition}[Strong compactness for $n$]
\label{compactness}
The sequence $(n_{\sigma,\gamma})_{\gamma}$ is compact in $L^{2}(0,T,L^{p}(\Omega))$ for $1\le p<2d/(d-2)$ if $d> 2$ and $1\le p<\infty$ else.
\end{proposition}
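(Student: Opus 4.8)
The plan is to apply the Fréchet-Kolmogorov theorem in the mixed-norm space $L^{2}(0,T,L^{p}(\Omega))$, using only two families of bounds, both uniform in $\gamma$. First, $\nabla n$ is bounded in $L^{2}(\Omega_{T})$ by~\eqref{ine111}--\eqref{ine311}, so $n$ is bounded in $L^{2}(0,T,H^{1}(\Omega))$ and, by Sobolev embedding, in $L^{2}(0,T,L^{q^{*}}(\Omega))$ with $q^{*}=\frac{2d}{d-2}$ (any $q^{*}<\infty$ if $d\le 2$). Second, the total mass is bounded by~\eqref{PW} and $n|\nabla\mu|^{2}\in L^{1}(\Omega_{T})$ by~\eqref{ine511}, so Cauchy-Schwarz gives $\|n\nabla\mu\|_{L^{1}(\Omega_{T})}\le\|n\|_{L^{1}(\Omega_{T})}^{1/2}\|n^{1/2}\nabla\mu\|_{L^{2}(\Omega_{T})}\le C$, and similarly $nG(\mu)\in L^{1}(\Omega_{T})$ because $G$ is bounded by~\eqref{eq:assumption G}. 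I would then check equicontinuity of the space and time translates of $n$ in $L^{2}(0,T,L^{p}(\Omega))$, uniformly in $\gamma$; together with boundedness and the boundary truncation explained in the Remark of Section~\ref{section:sig}, this yields relative compactness.

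For the spatial translates I would use $\|n(\cdot+y,\cdot)-n\|_{L^{2}(\Omega_{T})}\le |y|\,\|\nabla n\|_{L^{2}(\Omega_{T})}$, which tends to $0$ uniformly, and interpolate with the uniform $L^{2}(0,T,L^{q^{*}}(\Omega))$ bound to obtain smallness in $L^{2}(0,T,L^{p}(\Omega))$ for every $p<q^{*}$. The same interpolation controls the spatial mollification error $\|n-n\ast\varphi_{\eps}\|_{L^{2}(0,T,L^{p}(\Omega))}\le C\eps^{1-\theta}$, uniformly in $\gamma$, with $\theta\in[0,1)$ depending only on $p$ and $d$.

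The heart of the argument is the time equicontinuity, for which I would mollify in space and use the equation. Writing $\p_{t}n=\Div(n\nabla\mu)+nG(\mu)$ and convolving, $\p_{t}(n\ast\varphi_{\eps})=(n\nabla\mu)\ast\nabla\varphi_{\eps}+(nG(\mu))\ast\varphi_{\eps}$, so for a.e. $t$
\[
\|\p_{t}(n\ast\varphi_{\eps})(t)\|_{L^{\infty}(\Omega)}\le \|\nabla\varphi_{\eps}\|_{L^{\infty}}\,\|n\nabla\mu(t)\|_{L^{1}(\Omega)}+\|\varphi_{\eps}\|_{L^{\infty}}\,\|nG(\mu)(t)\|_{L^{1}(\Omega)}=:\psi_{\gamma}^{\eps}(t),
\]
with $\|\psi_{\gamma}^{\eps}\|_{L^{1}(0,T)}\le C(\eps)$ uniformly in $\gamma$. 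Hence $\|(n\ast\varphi_{\eps})(t+h)-(n\ast\varphi_{\eps})(t)\|_{L^{\infty}(\Omega)}\le\int_{t}^{t+h}\psi_{\gamma}^{\eps}$, and since $\int_{0}^{T-h}\big(\int_{t}^{t+h}\psi_{\gamma}^{\eps}\big)\,dt\le h\,\|\psi_{\gamma}^{\eps}\|_{L^{1}(0,T)}$ while $\int_{t}^{t+h}\psi_{\gamma}^{\eps}\le\|\psi_{\gamma}^{\eps}\|_{L^{1}(0,T)}$, I would conclude $\int_{0}^{T-h}\|(n\ast\varphi_{\eps})(t+h)-(n\ast\varphi_{\eps})(t)\|_{L^{p}(\Omega)}^{2}\,dt\le |\Omega|^{2/p}C(\eps)^{2}h$. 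Splitting $n(t+h)-n(t)$ into the two mollification errors plus this mollified increment, and choosing first $\eps$ small (to kill the errors, by the previous step) and then $h$ small, gives time equicontinuity of $n$ in $L^{2}(0,T,L^{p}(\Omega))$, uniformly in $\gamma$.

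The main obstacle is precisely this time direction: the equation only provides an $L^{1}(\Omega_{T})$ bound on the flux $n\nabla\mu$ (there is no uniform control of $\nabla\mu$ alone, nor a $\p_{t}n$ bound in a space better than $(L^{4}(0,T,W^{1,s}))'$), so Lions-Aubin is not directly available. Spatial mollification is what converts this merely integrable information into genuine equicontinuity, at the price of an $\eps$-dependent constant $C(\eps)$, which forces the two-parameter limit (control $h\to0$ for fixed $\eps$, then $\eps\to0$). Once equicontinuity and boundedness are in hand, Fréchet-Kolmogorov yields relative compactness in $L^{2}(0,T,L^{p}(\Omega))$ for all $1\le p<\frac{2d}{d-2}$ ($1\le p<\infty$ if $d\le2$). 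Equivalently, one may first deduce compactness in $L^{1}(\Omega_{T})$, extract an a.e.-convergent subsequence, and upgrade to $L^{2}(0,T,L^{p}(\Omega))$ by interpolating the a.e. limit against the uniform $L^{2}(0,T,L^{q^{*}}(\Omega))$ bound via Vitali's theorem.
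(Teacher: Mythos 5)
Your proposal is correct and takes essentially the same approach as the paper's Appendix~\ref{app:FK}: both arguments convert the uniform $L^{1}(\Omega_{T})$ bound on the flux $n\nabla\mu$ (obtained by Cauchy--Schwarz from~\eqref{ine511}) into uniform-in-$\gamma$ time equicontinuity of the spatial mollification of $n$ via the equation, control the mollification error by the uniform bound on $\nabla n$, and upgrade the resulting $L^{1}(\Omega_{T})$ compactness to $L^{2}(0,T,L^{p}(\Omega))$, $p<2d/(d-2)$, by interpolation with the $L^{2}(0,T,H^{1}(\Omega))$ estimate of Proposition~\ref{compactnessh2}. The only differences are cosmetic: you run a two-parameter limit (first $h\to 0$ at fixed $\eps$, then $\eps\to 0$) and use Young's inequality $L^{1}\ast L^{\infty}\to L^{\infty}$, whereas the paper couples the parameters through the choice $\delta=h^{1/2}$ and uses $\|\nabla\varphi_{\delta}\|_{L^{1}(\Omega)}\le C/\delta$.
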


The proof of this proposition uses a sequence $(\varphi_{\delta})_{\delta>0}\in C_{c}(\R^{d})$ of standard mollifiers with mass~1 such that
\begin{equation*}
 \|\nabla^{k}\varphi_{\delta}\|_{L^{1}(\Omega)}\le\frac{C}{\delta^{k}},
\end{equation*}
for any function $g\in L^{p}(\Omega)$,
\begin{equation*}
\|g\ast\varphi_{\delta}\|_{L^{p}(\Omega)}\le \|\varphi_{\delta}\|_{L^{1}(\Omega)}\|g\|_{L^{p}(\Omega)},	
\end{equation*}
and when $g\in W^{1,p}(\Omega)$ it holds
\begin{equation*}
\|g\ast\varphi_{\delta}-g\|_{L^{p}}\le\delta\|\nabla g\|_{L^{p}(\Omega)}.	
\end{equation*}

\begin{proof}[Proof of proposition \ref{compactness}]
Since  $\nabla n$ is bounded in $L^{1}(\Omega_{T})$ we only need to prove the time compactness
\begin{equation*}
\lim_{|h|\to 0}\int_{0}^{T-h}\int_{\Omega}|n_{\sigma,\gamma}(t+h,x)-n_{\sigma,\gamma}(t,x)|dxdt=0\quad \text{uniformly in $\gamma$}	.
\end{equation*}
Using the mollifiers with $\delta$ depending on $h$ to be specified later on, we first notice that
\begin{align*}
\int_{0}^{T-h}\int_{\Omega}|n(t+h,x)-n(t,x)|dxdt&\le \int_{0}^{T-h}\int_{\Omega}|n(t,x)-n(t,\cdot)\ast\varphi_{\delta}(x)|dxdt\\
&+\int_{0}^{T-h}\int_{\Omega}|n(t+h,x)-n(t+h,\cdot)\ast\varphi_{\delta}(x)|dxdt\\
&+\int_{0}^{T-h}\int_{\Omega}|n(t+h,\cdot)\ast\varphi_{\delta}(x)-n(t,\cdot)\ast\varphi_{\delta}(x)|dxdt.
\end{align*}
For the first and second term, the computations are the same, hence, we only present it for the first term. Using the properties of the mollifiers, we have
\begin{equation*}
\int_{0}^{T-h}\int_{\Omega}|n(t,x)-n(t,\cdot)\ast\varphi_{\delta}(x)|dxdt\le \delta\int_{0}^{T}\|\nabla n(t)\|_{L^{1}(\Omega)}dt\le C\delta.
\end{equation*}
To control $\|\nabla n\|_{L^{1}(\Omega)}$ we used the Cauchy-Schwarz inequality and Proposition~\ref{compactnessh2}.
The third term can be written as 
\begin{equation*}
\int_{0}^{T-h}\int_{\Omega}|n(t+h,\cdot)\ast\varphi_{\delta}(x)-n(t,\cdot)\ast\varphi_{\delta}(x)|dxdt=\int_{0}^{T-h}\int_{\Omega}\Big|\int_{t}^{t+h}\partial_{s}n(s,x)\ast\varphi_{\delta}(x)ds\Big|dxdt.
\end{equation*}
And using Equation~\eqref{eq:CH1} yields
\begin{align*}
\int_{0}^{T-h}\int_{\Omega}\Big|\int_{t}^{t+h}\partial_{s}n(s,\cdot)&\ast\varphi_{\delta}(x)ds\Big|dxdt \\
&=\int_{0}^{T-h}\int_{\Omega}\Big|\int_{t}^{t+h}[\Div(n\nabla\mu)+nG(\mu)](s,\cdot)\ast\varphi_{\delta}(x)ds\Big|dxdt\\
&\le\int_{0}^{T-h}\int_{\Omega}\Big|\int_{t}^{t+h}\sum_{i=1}^{d}n\partial_{x_{i}}\mu(s,\cdot)\ast\partial_{x_{i}}\varphi_{\delta}(x)ds\Big|dxdt\\
&+\int_{0}^{T-h}\int_{\Omega}\Big|\int_{t}^{t+h}nG(\mu)(s,\cdot)\ast\varphi_{\delta}(x)ds\Big|dxdt.
\end{align*}
The first term is bounded by 
\begin{equation*}
\int_{0}^{T-h}\int_{\Omega}\Big|\int_{t}^{t+h}\sum_{i=1}^{d}n\partial_{x_{i}}\mu(s,\cdot)\ast\partial_{x_{i}}\varphi_{\delta}(x)ds\Big|dxdt\le C\int_{0}^{T-h}\int_{t}^{t+h}\|n\nabla\mu(s)\|_{L^{1}(\Omega)}	\|\nabla\varphi_{\delta}\|_{L^{1}(\Omega)}.
\end{equation*}

Writing $n\nabla\mu=n^{1/2}n^{1/2}\nabla\mu$, using the Cauchy-Schwarz inequality, Inequality \eqref{ine511}, the first term is bounded by $Ch/\delta$.\\
The second term is bounded by $Ch$ using assumptions on $G$. Choosing $\delta=h^{1/2}$ gives compactness in $L^{1}(\Omega_{T})$.\\
Finally using Proposition~\ref{compactnessh2} and interpolation, we get the result.

\end{proof}

\section{Uniqueness with no source term}
\label{app:uniqueness}
We consider Equations~\eqref{eq:KS1}-\eqref{eq:KS2} with $G=0$. Therefore we have conservation of the mass.  To simplify the notations we suppose $\sigma=\delta=1$. We retain the regularity of the solutions from     Section~\ref{sec:inpressible-limit}. Writing $n=n_{2}-n_{1}$ and $w=w_{2}-w_{1}$ the difference of two solutions we consider $\varphi(t)$ such that
\begin{align*}
    -\Delta\varphi(t)&=n\quad\text{in $\Omega$},\\
    \nabla\varphi(t)\cdot\nu&=0\quad\text{on $\partial\Omega$}.
\end{align*}
Multiplying Equation~\eqref{eq:KS1} by $\varphi$, integrating in space and using integrations by parts we obtain 
\begin{align*}
\frac{1}{2}\frac{d}{dt}\int|\nabla\phi|^{2}-\int (n_{2}^{2}-n_{1}^{2})\Delta\varphi-\int n\nabla w_{2}\cdot\nabla\varphi-\int n_{1}\nabla w\cdot\nabla\varphi&=0.
\end{align*}
We denote by $I_{1}, I_{2}, I_{3}$ the last three terms. 
Using the definition of $\varphi$, we have
\begin{equation*}
I_{1}=-\int (n_{2}^{2}-n_{1}^{2})\Delta\varphi=\int (n_{2}^{2}-n_{1}^{2})(n_{2}-n_{1}),
\end{equation*}
which is nonnegative since $x\mapsto x^{2}$ is increasing for $x\ge 0$.
Now, we treat $I_{2}$. Using integration by parts and $n=-\Delta\varphi$ one can show that 

\begin{align*}
I_{2}&=-\sum_{i,j}\int\partial_{i}\varphi\partial_{ij}w_{2}\partial_{j}\varphi-\sum_{i,j}\int\partial_{i}\varphi\partial_{j}w_{2}\partial_{ij}\varphi\\
&=J_{1}+J_{2}.
\end{align*}
Using integrations by parts on the second term we find
\begin{align*}
J_{2}&=\sum_{i,j}\int\partial_{j}\frac{|\partial_{i}\varphi|^{2}}{2}\partial_{j}w_{2}+\sum_{i,j}\int|\partial_{i}\varphi|^{2}\partial_{jj}w_{2}\\
&=\frac{1}{2}\int|\nabla\varphi|^{2}\Delta w_{2}.
\end{align*}
We finally obtain
\begin{equation*}
I_{2}\le C\int |D^{2}w_{2}||\nabla\varphi|^{2}.
\end{equation*}
From Inequality~\eqref{ine33} together with the
Calderón–Zygmund lemma and using $\nabla\varphi\in L^{\infty}$ we find
\begin{align*}
I_{2}&\le Cp\Big(\int|\nabla\varphi|^{2p/(p-1)}\Big)^{(p-1)/p}\\
&\le Cp\norm{\nabla\varphi}_{L^{\infty}}^{2/p}\Big(\int|\nabla\varphi|^{2}\Big)^{(p-1)/p},
\end{align*}
for every $1\le p<\infty$.
For $I_{3}$ we recall that $n_{1}$ is bounded in $L^{\infty}$. Using the Cauchy-Schwarz inequality we only need to show that $\norm{\nabla w}_{L^{2}}\le C\norm{\nabla\varphi}_{L^{2}}$.

Equation~\eqref{eq:KS2} for the difference of the two solutions can be written as
\begin{equation*}
-\Delta w+w_{2}^{\gamma}-w_{1}^{\gamma}+w=-\Delta\varphi.
\end{equation*}
Now, we multiply by $w$, use integration by parts and the fact that $x\mapsto x^{\gamma}$ is increasing for $x\ge 0$ to get 
\begin{equation*}
\int |\nabla w|^{2}+w^{2}\le \int |\nabla\varphi\cdot\nabla w|.   
\end{equation*}
Applying Young's inequality yields the result. 
Therefore 
\begin{equation*}
I_{3}\le C\norm{\nabla\varphi}_{L^{2}}^{2}.
\end{equation*}
Combining the previous results we obtain 
\begin{equation*}
\frac{d}{dt}\eta(t)\le Cp\max(\eta(t)^{1-1/p},\eta(t)),    
\end{equation*}
for every $1\le p<\infty$ where $\eta(t)=\int |\nabla\varphi(t)|^{2}$. 

Following~\cite{Bertozzi-2010,Bedrossian_2011}, for $t<1/C$, the solution is bounded by
\begin{equation*}
 \eta(t)\le (Ct)^{p}.  
\end{equation*}
For $t<1/(2C)$ we then find
\begin{equation*}
\eta(t)\le 2^{-p}.
\end{equation*}
Letting $p\to\infty$ we find uniqueness on $[0,\frac{1}{2C})$ and this procedure can be iterated on the whole interval of existence.

\bibliographystyle{siam}
\bibliography{biblio}

\end{document}